\documentclass[pdflatex,sn-mathphys-num]{sn-jnl}%

\usepackage{graphicx}%
\usepackage{multirow}%
\usepackage{amsmath,amssymb,amsfonts}%
\usepackage{amsthm}%
\usepackage{mathrsfs}%
\usepackage[title]{appendix}%
\usepackage{xcolor}%
\usepackage{textcomp}%
\usepackage{manyfoot}%
\usepackage{booktabs}%
\usepackage{algorithm}%
\usepackage{algorithmicx}%
\usepackage{algpseudocode}%
\usepackage{listings}%

\newtheorem{theorem}{Theorem}[section]
\newtheorem{lemma}[theorem]{Lemma}
\newtheorem{corollary}[theorem]{Corollary}
\newtheorem{proposition}[theorem]{Proposition}

\theoremstyle{definition}
\newtheorem{definition}[theorem]{Definition}
\newtheorem{example}[theorem]{Example}

\newtheorem{question}[theorem]{Question}

\theoremstyle{remark}
\newtheorem{remark}[theorem]{Remark}
\begin{document}

\title[Article Title]{The $\phi$-conjugation of quaternionic matrices and generalized Autonne-Takagi factorization}

\author[1]{\fnm{Cailing} \sur{Yao}}\email{yaocl@ccucm.edu.cn}

\author[2]{\fnm{Bingzhe} \sur{Hou}}\email{houbz@jlu.edu.cn}
\equalcont{These authors contributed equally to this work.}

\author*[3]{\fnm{Yue} \sur{Xin}}\email{2024104@hlju.edu.cn}
\equalcont{These authors contributed equally to this work.}
\affil[1]{\orgdiv{School of Medical Informatics}, \orgname{Changchun University of Chinese Medicine}, \orgaddress{\street{Jingyue Street}, \city{Changchun}, \postcode{130117}, \country{China}}}

\affil[2]{\orgdiv{School of Mathematics}, \orgname{Jilin University}, \orgaddress{\street{Qianjin Street}, \city{Changchun}, \postcode{130012}, \country{China}}}

\affil[3]{\orgdiv{School of Mathematics and Statistics}, \orgname{Heilongjiang University}, \orgaddress{\street{Xuefu Road}, \city{Harbin}, \postcode{050016}, \country{China}}}


\abstract{Let $\phi$ be a quaternion of modulus $1$. In this article, we study some topics related to $\phi$-conjugation for quaternionic matrices, including  $\phi$-Hermitian matrices, $\phi$-conjugate normal matrices, unitary $\phi$-congruence and  $\phi$-HSH decomposition (decomposition of a $\phi$-Hermitian matrix and a skew $\phi$-Hermitian matrix). In particular, we generalize the Autonne-Takagi factorization of quaternion $\phi$-Hermitian matrices for all unit quaternion $\phi$. This gives an affirmative answer to a problem proposed by  R. Horn and F. Zhang in the paper ``A generalization of the complex Autonne-Takagi factorization to quaternion matrices, Linear Multilinear A. 60: 1239--1244, 2012''.
}

\keywords{Quaternionic matrices, $\phi$-Hermitian, Autonne-Takagi factorization, unitary $\phi$-congruence, $\phi$-HSH decomposition}

\pacs[MSC Classification]{15A20, 15A23, 15B33, 15B57}

\maketitle

\section{Introduction and preliminaries}\label{sec1}
Quaternions have extensive applications in many fields, such as physics, computer graphics, robotics, aerospace, virtual reality, and so on.
 The skew field $\mathbb{H}$ of quaternions was discovered by W. Hamilton in 1843. We define $a\in\mathbb{H}$ to be $a=a_0+a_1\boldsymbol{i}+a_2\boldsymbol{j}+a_3\boldsymbol{k},$ where $a_0, a_1, a_2, a_3\in\mathbb{R}$ and $\boldsymbol{i}^2=\boldsymbol{j}^2=\boldsymbol{k}^2=\boldsymbol{ijk}=-1$. Denote the real and imaginary parts of $a$  by $\mathrm{Re}(a)=a_0$ and $\mathrm{Im}(a)=a_1\boldsymbol{i}+a_2\boldsymbol{j}+a_3\boldsymbol{k}$, respectively, the modulus of $a$ by
$|a|=\sqrt{a_0^2+a_1^2+a_2^2+a_3^2}$, and the conjugation of $a$ by $\overline{a}=a_0-a_1\boldsymbol{i}-a_2\boldsymbol{j}-a_3\boldsymbol{k}$. We say $a\in\mathbb{H}$ is invertible if there exists a $b\in\mathbb{H}$ such that $ab=ba=1$ and call $b$ the inverse of $a$.

 Denote by $\mathbb{C}$ the field of complex numbers and $M_{n}(\mathbb{C})$ the set of all $n\times n$ complex matrices, for more details about $M_{n}(\mathbb{C})$, refer to the book \cite{FZZ}. Denote by $M_{n}(\mathbb{H})$ the set of all $n\times n$ quaternionic matrices. If $Q=(q_{ij}) \in M_{n}(\mathbb{H})$, we use $\overline{Q}=(\overline{q_{ij}}),Q^{T}=(q_{ji}),Q^{*}=\overline{Q^{T}}=(\overline{Q})^{T}$ to denote the conjugation, transpose and conjugate transpose of $Q$ respectively. If $Q=\overline{Q}$, then $Q$ is a real matrix. If $Q=Q^{T}$, we say $Q$ is symmetric and if $Q=Q^{*}(Q=-Q^{*})$, then we say $Q$ is Hermitian(skew-Hermitian). For two matrices $A,B\in M_{n}(\mathbb{H})$, if there exists an invertible matrix $P\in  M_{n}(\mathbb{H})$ such that $PAP^{-1}=B$, then we say $A$ is similar to $B$ and denote by $A\sim B$. Especially, when the invertible matrix turns to a unitary matrix $U$, we say $A$ is unitarily equivalent to $B$ and denote by $A\sim_{u} B$. We use $\mathbb{H}^{n}$ to denote the set of all n-dimensional quaternion column vectors. If $\boldsymbol{u}=(u_{1},u_{2},\cdots,u_{n})^{T}, \boldsymbol{v}=(v_{1},v_{2},\cdots,v_{n})^{T}$ are two column vectors in $\mathbb{H}^{n}$, then the inner product of $\boldsymbol{u}$ and $\boldsymbol{v}$ is defined to be
 $$
 \langle \boldsymbol{u},\boldsymbol{v} \rangle=\overline{v_{1}}u_{1}+\cdots+\overline{v_{n}}u_{n}=\boldsymbol{v}^{*}\boldsymbol{u}.
 $$
If $\langle \boldsymbol{u},\boldsymbol{v} \rangle=0$, then we say the two vectors are orthogonal to each other. There have been many classical works related to quaternionic matrices, such as the eigenvalues of quaternionic matrices \cite{FWZ}, \cite{ZFZS}, quaternion matrix equations \cite{HTX} and so on.

The singular value decomposition (SVD) of complex matrices was originally given by L. Autonne \cite{A1915} in $1915$. He also obtained
special singular value decompositions for symmetric, coninvolutory, normal, orthogonal and Lorentzian matrices. The result about the singular value decomposition for a complex symmetric matrix $A$ is that every complex symmetric matrix $A$ can be factored as $A=V\Sigma V^{T}$, where $V$ is complex unitary and $\Sigma$ is nonnegative diagonal \cite{TT}. This decomposition is called the Autonne-Takagi decomposition. The Autonne-Takagi decomposition has wide applications in fields such as quantum
information and signal processing \cite{AT}. The importance of SVD lies in its unification of several core concepts in linear algebra (norms, rank, condition number, subspaces), and its theoretical provision of optimal low-rank approximations. Its applications span multiple fields, including data science, signal processing, control theory, and quantum physics. Whenever problems involve matrix analysis, dimensionality reduction, denoising, compression, or computing pseudoinverses, SVD is almost always one of the most reliable tools.

As a generalization of L. Autonne's result, C. Took, D. Mandic and F. Zhang \cite{TMZ} proposed a unitary diagonalization of a special class of quaternionic matrices, the so-called $\eta$-Hermitian matrices, where $\eta\in\{\boldsymbol{i},\boldsymbol{j},\boldsymbol{k}\}$. Later, R. Horn and F. Zhang \cite{HZ} gave the Autonne-Takagi factorization for quaternion $\eta$-Hermitian matrices where $\eta$ is a unit pure imaginary quaternion. In the meanwhile, they proposed a question in \cite{HZ}.

\begin{question} Let $\eta$ be a unit quaternion but not pure imaginary. Does the Autonne-Takagi factorization  still exist for all quaternion $\eta$-Hermitian matrices?
\end{question}

We conduct a study on this problem and obtain complete results. To clarify this issue, we have provided the following definitions.

Assume $\phi\in\mathbb{H}$, that is, $\phi=\phi_{0}+\phi_{1}\boldsymbol{i}+\phi_{2}\boldsymbol{j}+\phi_{3}\boldsymbol{k}$.
Let $\mathbb{S}^{3}=\{ \phi\in\mathbb{H}; |\phi|^{2}=1\}$ and $\mathbb{S}=\{ \phi\in\mathbb{H}; |\phi|^{2}=1, \phi_{0}=0\}$. Then $\mathbb{S}^{3}\setminus \mathbb{S}=\{ \phi\in \mathbb{S}^{3}; \phi_{0}\neq 0\}$. If $\phi=\pm 1$, then $\overline{A}^{\phi}=A$ for all $A\in M_{n}(\mathbb{H})$. Thus we assume $\phi\neq\pm 1$ in the present paper. For convenience, we let $\mathbb{S}^{\prime}=\mathbb{S}^{3}\setminus \{\mathbb{S}\cup\{\pm 1\}\}$.

\begin{definition}
Let $\phi\in \mathbb{S}^{3}$ and $A\in M_{n}(\mathbb{H})$. Define $\overline{A}^{\phi}=\overline{\phi} A\phi$ to be the $\phi$-conjugation of $A$, and $A^{*{\phi}}=\overline{A^*}^{\phi}(=\overline{\phi }A^*\phi)$ to be the $\phi$-transpose of $A$.
\end{definition}

T. Ell and S. Sangwine studied quaternion involutions and anti-involutions in \cite{TAESJS}. M. Bekar and Y. Yay{\i} conducted a research on the dual quaternion involutions and anti-involutions in \cite{MBYY}. As a matter of fact, when $\phi\in\mathbb{S}$, our definition of $\phi$-conjugation is precisely a quaternion anti-involution and our definition of $\phi$-transpose is a quaternion involution as defined in \cite{TAESJS}.

\begin{definition}
Let $\phi\in \mathbb{S}^{3}$, $A$ and $B$ be  matrices in $M_n(\mathbb{H})$. If there exists an invertible matrix $P$ in $M_n(\mathbb{H})$ such that $PA(\overline{P}^{\phi})^{-1}=B$, then we say $A$ is $\phi$-similar to $B$ and denote by $A\sim_{\phi} B$. In particular, we say $A$ is unitarily $\phi$-congruent to $B$ if there exists a unitary matrix $U$ such that $UA(\overline{U}^{\phi})^{-1}=UAU^{*\phi}=B$ and denote by $A\sim_{\phi-u} B$.
\end{definition}

There is a close connection between unitary $\phi$-congruence and unitary equivalence.
\begin{lemma}\label{264141}
Let $A,B\in M_{n}(\mathbb{H})$, $\phi\in \mathbb{S}^{3}$. Then $A\sim_{\phi-u} B$ if and only if $A\overline{\phi}\sim_{u} B\overline{\phi}$.
\end{lemma}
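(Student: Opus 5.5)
The plan is to rewrite the defining identity of unitary $\phi$-congruence so that the scalar $\phi$ can be pulled out. I will use only that $\phi$ is a unit quaternion, so $\phi\overline{\phi}=\overline{\phi}\phi=1$, and that a quaternionic scalar commutes with a matrix entrywise only when it is real. The key observation is that for any $U\in M_n(\mathbb{H})$,
\[
U^{*\phi}=\overline{\phi}\,U^{*}\phi .
\]
Here $\overline{\phi}U^*\phi$ means the entrywise product $\overline{\phi}(U^*)_{ij}\phi$, which equals the matrix product $(\overline{\phi}I)U^*(\phi I)$. I will treat the scalars $\phi,\overline{\phi}$ as the scalar matrices $\phi I,\overline{\phi}I$ and keep track of left versus right multiplication throughout.

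For the forward direction, suppose $UAU^{*\phi}=B$ with $U$ unitary. This reads
\[
UA\,\overline{\phi}\,U^*\phi=B .
\]
The scalar $\overline{\phi}$ sits between $A$ and $U^*$ as the matrix $\overline{\phi}I$, so the product is unambiguous. Right-multiplying by $\overline{\phi}I$ and using $\phi\overline{\phi}=1$ gives
\[
U(A\overline{\phi})U^*=B\overline{\phi},
\]
that is, $A\overline{\phi}\sim_u B\overline{\phi}$.

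For the converse, suppose $U(A\overline{\phi})U^*=B\overline{\phi}$ for some unitary $U$. Right-multiplying by $\phi I$ gives $UA\overline{\phi}U^*\phi=B$, which is exactly $UAU^{*\phi}=B$. Hence $A\sim_{\phi-u}B$, with the same unitary $U$ in both directions.

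The only delicate point is noncommutativity. I must check that $\overline{\phi}U^*\phi$ (entrywise) really equals the matrix product $A\cdot\overline{\phi}I\cdot U^*\cdot\phi I$ once it is placed after $A$. One cannot move $\overline{\phi}$ past the entries of $U^*$ or $A$, but the argument never does this: it only uses associativity of matrix multiplication and the identity $\phi\overline{\phi}=1$ at the far right. There is no real obstacle beyond stating this carefully.
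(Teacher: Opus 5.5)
Your proof is correct and is the intended argument. The paper states this lemma without proof, but it uses exactly your manipulation at the scalar level in the proof of Lemma~\ref{1154}: $UAU^{*\phi}=UA\overline{\phi}U^{*}\phi=B$ holds if and only if $U(A\overline{\phi})U^{*}=B\overline{\phi}$, by right multiplication by $\overline{\phi}$ or by $\phi$, with the same unitary $U$ in both directions. Your aside that a scalar commutes with a matrix only when it is real is never used, and it is accurate only in the sense of commuting with every matrix, so you can drop it.
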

In particular, if  $\phi\in \mathbb{S}$, then $A\overline{\phi}\sim_{u} B\overline{\phi}$ is equivalent to $\phi A \sim_{u}\phi B$.

Since similar quaternionic matrices share the same right eigenvalues and the right eigenvalues are similar invariants. Likewise, we define the right $\phi$-eigenvalues, and the right $\phi$-eigenvalues defined in such a way are $\phi$-similar invariants.

\begin{definition}
Let $\phi\in \mathbb{S}^{3}$ and $A\in M_n(\mathbb{H})$. If $A\overline{\boldsymbol{v}}^{\phi}=\boldsymbol{v}\lambda$ for some quaternion $\lambda$ and nonzero $\boldsymbol{v}\in \mathbb{H}^{n}$, then we say $\lambda$ is a right $\phi$-eigenvalue of $A$ and $\boldsymbol{v}$ is a right $\phi$-eigenvector of $A$ with respect to $\lambda$. If $A\overline{\boldsymbol{v}}^{\phi}=\lambda\boldsymbol{v}$ for some quaternion $\lambda$ and nonzero $\boldsymbol{v}\in \mathbb{H}^{n}$, then we say $\lambda$ is a left $\phi$-eigenvalue of $A$ and $\boldsymbol{v}$ is a left $\phi$-eigenvector of $A$ with respect to $\lambda$.
\end{definition}
From above definitions we can see that
\begin{enumerate}
\item The right $\phi$-eigenvalues are $\phi$-similar invariants while the left $\phi$-eigenvalues are not.
\item If $A$ and $B$ are $\phi$-similar, then they have the same right $\phi$-eigenvalues.
\item Assume $\lambda$ and $\mu$ are right $\phi$-eigenvalues of $A$. If $\lambda$ and $\mu$ are not $\phi$-similar, then the right $\phi$-eigenvectors with respect to $\lambda$ and $\mu$ are right linearly independent.
\end{enumerate}
  Readers who are interested may further explore the properties of $\phi$-similarity and left or right $\phi$-eigenvalues further, in comparison with the classic similarity and left or right eigenvalues. We will not elaborate too much in the present paper.

A well-known truth is that every normal quaternionic matrix $A$ can be unitarily equivalent to a complex diagonal matrix, and the normality is preserved by unitary equivalence. In particular, if $A$ is Hermitian, then the diagonal matrix is real. If $A$ is skew-Hermitian, then each of the diagonal entry is pure imaginary. However, the normality is not preserved by unitary $\phi$-congruence. We propose a new definition named $\phi$-conjugate normality which is preserved under unitary $\phi$-congruence.

\begin{definition}
Let $\phi\in \mathbb{S}$ and $A\in M_n(\mathbb{H})$. If $A$ satisfies
$$
\overline{A^*A}^{\phi}=AA^*,
$$
then we say $A$ is $\phi$-conjugate normal.
\end{definition}
The reason we assume $\phi\in \mathbb{S}$ is that the two equalities $\overline{A^*A}^{\phi}=AA^*$ and $\overline{AA^*}^{\phi}=A^*A$ do not imply each other when $\phi\in \mathbb{S}^{\prime}$. This asymmetric relationship will cause the definition to appear somewhat unreasonable.
\begin{definition}
Let $\phi\in \mathbb{S}^{3}$. If $A=A^{*{\phi}}(A=-A^{*{\phi}})$, then we say $A$ is $\phi$-Hermitian(skew $\phi$-Hermitian).
\end{definition}

\begin{remark}
Generally speaking, when $\phi\in \mathbb{S}^{\prime}$, the equation $\overline{\overline{A}^{\phi}}^{\phi}=A$ does not hold for most $A\in M_{n}(\mathbb{H})$. Fortunately,  the equation $A=A^{*{\phi}}(A=-A^{*{\phi}})$ is always equivalent to the equation $\overline{A}^{\phi}=A^{*}(\overline{A}^{\phi}=-A^{*})$. This fact will be explained in detail in Proposition \ref{26491}. Therefore, the definition of $\phi$-Hermitian(skew $\phi$-Hermitian) applies perfectly to all $\phi\in \mathbb{S}^{3}$. Especially, when $\phi\in \mathbb{S}$, each $\phi$-Hermitian matrix is $\phi$-conjugate normal.
\end{remark}

The present article is organized as follows. In Section $2$, we review some known results and investigate some new results for the unitary equivalence of quaternionic matrices, which play an important role in the subsequent discussion of unitary $\phi$-congruence.
In Section $3$, we mainly generalize the Autonne-Takagi factorization of quaternion $\phi$-Hermitian matrices for all unit quaternion $\phi$. The discussion is divided into two cases, $\textrm{Re}\phi=0$ and $\textrm{Re}\phi\neq0$.
In Section $4$, we study the unitary $\phi$-congruence for quaternionic matrices. In particular, we introduce $\phi$-HSH decomposition (decomposition of a $\phi$-Hermitian matrix and a skew $\phi$-Hermitian matrix) and then use it to provide necessary and sufficient conditions for unitary $\phi$-congruence.

\section{Unitary equivalence}
Complex unitary matrices preserve inner products, norms, and angles, making them essential for maintaining geometric structure in complex vector spaces. They have many applications such as in quantum mechanics \cite{LW2024}, signal processing \cite{VM1997}, communications \cite{HM2000} and so on. Yu. A$\text{l}^\prime$pin and Kh. Ikramov \cite{Kh2003} reduced the problem of verifying the unitary equivalence between
two complex matrices $A,B\in M_{n}(\mathbb{C})$ to verifying the similarity between two pairs of complex matrices.

\begin{proposition}[\cite{Kh2003}]\label{li}
For any two complex matrices $A, B$ in $M_{n}(\mathbb{C})$, the following assertions are equivalent.
\begin{enumerate}
\item [(1)] A and B are unitarily equivalent;
\item [(2)] the families $\{A, A^{\ast}\}$ and $\{B,B^{\ast}\}$ are unitarily equivalent;
\item [(3)] the families $\{A, A^{\ast}\}$ and $\{B,B^{\ast}\}$ are similar.
\end{enumerate}
\end{proposition}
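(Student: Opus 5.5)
The chain of implications I would prove is (1)$\Rightarrow$(2)$\Rightarrow$(3)$\Rightarrow$(1). Here "unitarily equivalent" means unitarily similar, $UAU^{*}=B$ with $U$ unitary. "Similar families" means a single invertible $S$ with $SAS^{-1}=B$ and $SA^{*}S^{-1}=B^{*}$.

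The first two implications are short. For (1)$\Rightarrow$(2), take $U$ unitary with $UAU^{*}=B$. Taking adjoints gives $UA^{*}U^{*}=(UAU^{*})^{*}=B^{*}$, so the same $U$ works for both members of the family. For (2)$\Rightarrow$(3), note that a unitary $U$ is invertible with $U^{-1}=U^{*}$, so nothing more is needed.

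The real work is (3)$\Rightarrow$(1). Suppose $SA=BS$ and $SA^{*}=B^{*}S$ with $S$ invertible.

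First, take adjoints of the second relation. This gives $AS^{*}=S^{*}B$, i.e. $S^{*}B=AS^{*}$.

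Second, show that $P=S^{*}S$ commutes with $A$. Indeed,
$$PA=S^{*}SA=S^{*}BS=AS^{*}S=AP.$$
Applying the same argument to the relation $SA^{*}=B^{*}S$ together with $SA=BS$ (roles of $A$ and $A^{*}$ swapped) gives $PA^{*}=A^{*}P$.

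Third, pass to the square root. $P$ is positive definite because $S$ is invertible. Let $P^{1/2}$ be its positive definite square root. Since $P^{1/2}$ is a polynomial in $P$, it also commutes with $A$.

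Fourth, form the polar decomposition. Set $U=SP^{-1/2}$. Then
$$U^{*}U=P^{-1/2}S^{*}SP^{-1/2}=I,$$
so $U$ is unitary. Finally,
$$UAU^{*}=SP^{-1/2}AP^{-1/2}S^{*}=SAP^{-1}S^{*}=SAS^{-1}(S^{*})^{-1}S^{*}=SAS^{-1}=B.$$
The third equality uses $P^{-1}=S^{-1}(S^{*})^{-1}$. This proves (1).

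The step that needs the most care is the one built on $S^{*}B=AS^{*}$. This relation comes from taking adjoints of $SA^{*}=B^{*}S$, and it is essential that this particular relation is the one that produces the commutation. The fact that $P^{1/2}$ is a polynomial in $P$ is standard for Hermitian positive definite complex matrices: take a Lagrange interpolation polynomial of $\sqrt{t}$ on the spectrum of $P$.

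This argument is also the template for the quaternionic analogues later in the paper. There the relation $S^{*}B=AS^{*}$ is obtained the same way, and the square-root step is replaced by quaternionic diagonalization of Hermitian matrices with real eigenvalues.
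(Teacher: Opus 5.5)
Your proof is correct. The paper only cites this result, but its proof of the quaternionic analogue (Theorem~\ref{shui}) follows the same route: show that $PP^{*}$ commutes with $A$ (your $S^{*}S$ with $S=P^{-1}$ is its inverse), take the polar decomposition, and transfer the commutation to the positive square root (the paper does this via functional calculus on the complex representation rather than your interpolation polynomial), so your argument is essentially the paper's.
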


We have obtained a conclusion similar to Proposition \ref{li} in finite dimensional quaternionic matrix situation.

\begin{theorem}\label{shui}
Let $A,B \in M_{n}(\mathbb{H})$ be any two quaternionic matrices. Then the following statements are equivalent.
\begin{enumerate}
\item [(1)] A and B are unitarily equivalent;
\item [(2)] the families $\{A, A^{\ast}\}$ and $\{B,B^{\ast}\}$ are unitarily equivalent;
\item [(3)] the families $\{A, A^{\ast}\}$ and $\{B,B^{\ast}\}$ are similar.
\end{enumerate}
\end{theorem}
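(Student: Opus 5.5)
The implications (1)$\Rightarrow$(2)$\Rightarrow$(3) are immediate, so the plan is to concentrate on (3)$\Rightarrow$(1). For (1)$\Rightarrow$(2): if $UAU^{*}=B$ with $U$ unitary, then taking conjugate transposes gives $UA^{*}U^{*}=B^{*}$, because $(XY)^{*}=Y^{*}X^{*}$ still holds for quaternionic matrices. For (2)$\Rightarrow$(3): a unitary matrix is in particular invertible with $U^{-1}=U^{*}$.

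For (3)$\Rightarrow$(1), I would use the standard complex representation $\chi:M_{n}(\mathbb{H})\to M_{2n}(\mathbb{C})$. Write $Q=Q_{1}+Q_{2}\boldsymbol{j}$ with $Q_{1},Q_{2}\in M_{n}(\mathbb{C})$, and set
$$
\chi(Q)=\begin{pmatrix} Q_{1} & Q_{2}\\ -\overline{Q_{2}} & \overline{Q_{1}}\end{pmatrix}.
$$
I will use the following known facts. First, $\chi$ is an injective real algebra homomorphism. Second, it satisfies $\chi(Q^{*})=\chi(Q)^{*}$. Third, $Q$ is invertible (resp.\ unitary) if and only if $\chi(Q)$ is. Fourth, the image of $\chi$ is exactly $\{X\in M_{2n}(\mathbb{C}): J\overline{X}J^{-1}=X\}$, where $J=\begin{pmatrix}0&I_{n}\\-I_{n}&0\end{pmatrix}$.

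Now suppose $PAP^{-1}=B$ and $PA^{*}P^{-1}=B^{*}$. Applying $\chi$ shows that the complex families $\{\chi(A),\chi(A)^{*}\}$ and $\{\chi(B),\chi(B)^{*}\}$ are similar. By Proposition \ref{li}, $\chi(A)$ and $\chi(B)$ are then unitarily equivalent in $M_{2n}(\mathbb{C})$.

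The main obstacle is that the unitary $W$ obtained this way need not lie in the image of $\chi$. I would therefore not pass through Proposition \ref{li} as a black box, and would instead work directly with the quaternionic matrix $P$ via polar decomposition, which avoids the issue. Concretely, write $P=UH$, where $U$ is unitary and $H=(P^{*}P)^{1/2}$ is positive definite. Over $\mathbb{H}$ this is obtained by unitarily diagonalizing the positive definite Hermitian matrix $P^{*}P$ with real diagonal, a fact the paper takes as known.

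From $PA=BP$ and $PA^{*}=B^{*}P$, taking the conjugate transpose of the second relation gives $AP^{*}=P^{*}B$. Then
$$
P^{*}PA=P^{*}BP=AP^{*}P,
$$
so $A$ commutes with $P^{*}P$. Hence $A$ commutes with $H$: $H$ is a real polynomial in $P^{*}P$, since in a unitary diagonalization with real eigenvalues one can take $H=p(P^{*}P)$ for a real interpolation polynomial $p$, and real scalars commute with quaternion entries. Then
$$
B=PAP^{-1}=UHAH^{-1}U^{*}=UAU^{*},
$$
which proves (1).

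The remaining point to check carefully is this polynomial step over $\mathbb{H}$. It requires that real polynomials in a matrix commute with anything commuting with that matrix, which holds because real scalars are central in $\mathbb{H}$.
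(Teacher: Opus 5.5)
Your proof is correct and follows the paper's route: polar decomposition of $P$, then $A$ commutes with $P^{*}P$ (the paper, using the convention $P^{-1}AP=B$, works with $PP^{*}$ and $P=HU$), hence the positive factor $H$ commutes with $A$, so the unitary factor alone gives the equivalence. The only difference is how you justify $HA=AH$: the paper passes to the complex representation and uses the continuous functional calculus, whereas you stay in $M_{n}(\mathbb{H})$ and write $H=p(P^{*}P)$ for a real interpolation polynomial $p$, which is more elementary and correctly relies on real coefficients being central.
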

\begin{proof}
First of all, we concentrate on the proof of $(3)\Rightarrow (1)$. Let $P$ be the invertible quaternionic matrix such that
$$
P^{-1}AP=B,~~~~P^{-1}A^{\ast}P=B^{\ast}.
$$
This implies
\begin{equation}\label{1}
PP^{\ast}A=APP^{\ast}.
\end{equation}
By the polar decomposition of a quaternionic matrix (Theorem $7.1$ of \cite{Z}), for any $P\in M_{n}(\mathbb{H})$, there exists a quaternion unitary matrix $U$ and a quaternion positive semidefinite Hermitian matrix $H$ such that $P = HU$, where $H^2=PP^{\ast}$.

Next we aim to prove the equality $HA=AH$. Recall that for a matrix $T$ in $M_{n}(\mathbb{H})$, $T$ can be naturally decomposed into the form $T=T_{1}+\boldsymbol{j}T_{2}$, where $T_{1},T_{2}\in M_{n}(\mathbb{C})$. Then, the complex representation matrix of $T$ is defined to be
$$
T_{\mathbb{C}}=\begin{pmatrix}
T_{1} & -\overline{T_{2}} \\
T_{2} & \overline{T_{1}}
\end{pmatrix}.
$$
Notice the truth that $T$ is a positive semidefinite Hermitian quaternionic matrix in $M_{n}(\mathbb{H})$ if and only if $T_{\mathbb{C}}$ is a positive semidefinite Hermitian complex matrix in
$M_{2n}(\mathbb{C})$. Equation (\ref{1}) is $H^{2}A=AH^{2}$ actually and this is equivalent to $(H^{2})_{\mathbb{C}}A_{\mathbb{C}}=A_{\mathbb{C}}(H^{2})_{\mathbb{C}}$, or equivalently, $(H_{\mathbb{C}})^{2}A_{\mathbb{C}}=A_{\mathbb{C}}(H_{\mathbb{C}})^{2}$.

Now, for the Hermitian complex matrix $(H_{\mathbb{C}})^{2}$, according to the relevant conclusions of the continuous functional calculus for normal operators \cite{JBC}, we have $H_{\mathbb{C}}A_{\mathbb{C}}=A_{\mathbb{C}}H_{\mathbb{C}}$. Therefore, $HA=AH$. Hence
$$
B=P^{-1}AP=U^{-1}H^{-1}AHU=U^{\ast}H^{-1}HAU=U^{\ast}AU.
$$
Since it is not difficult to verify $(1)\Rightarrow (2)$ and $(2)\Rightarrow (3)$, we get the desired result finally.
\end{proof}

It has been proved in \cite{GJSW87} that a complex matrix $A\in M_{n}(\mathbb{C})$ is normal if and only if there exists a polynomial $p$ such that $A^*=p(A)$. We obtain the analogous conclusion for matrices in $M_{n}(\mathbb{H})$. First, we need the following lemma.

\begin{lemma}\label{1011}
Let $\lambda_1,\cdots,\lambda_n\in\mathbb{C}$. Then there exists a real-coefficient polynomial $p$ of degree at most $2n-1$ such that 
$$
p(\lambda_i)=\overline{\lambda_i},\ \ \ 1\leq i\leq n.
$$
\end{lemma}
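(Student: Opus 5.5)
The plan is to interpolate on the set of $\lambda_i$ together with their complex conjugates, and to use the symmetry of that set to force the interpolating polynomial to have real coefficients.

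First I would remove duplicates and form the finite set
$$
S=\{\lambda_1,\dots,\lambda_n\}\cup\{\overline{\lambda_1},\dots,\overline{\lambda_n}\}\subset\mathbb{C}.
$$
It has at most $2n$ distinct elements, say $m\le 2n$ of them, and it is closed under complex conjugation. On $S$ I define the target function $f(z)=\overline{z}$. By Lagrange interpolation there is a unique complex polynomial $q$ of degree at most $m-1\le 2n-1$ with $q(z)=\overline{z}$ for every $z\in S$. In particular $q(\lambda_i)=\overline{\lambda_i}$ for all $i$.

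Next I show that $q$ has real coefficients. Let $\overline{q}$ denote the polynomial whose coefficients are the complex conjugates of those of $q$. For $z\in S$ we have $\overline{z}\in S$, so
$$
\overline{q}(z)=\overline{q(\overline{z})}=\overline{\overline{\overline{z}}}=\overline{z}.
$$
Thus $\overline{q}$ interpolates the same data on $S$ and also has degree at most $m-1$. By uniqueness of the interpolant, $\overline{q}=q$, so every coefficient of $q$ is real. Taking $p=q$ finishes the proof.

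The only delicate points are bookkeeping. Repeated values must be handled by deduplication. Real $\lambda_i$ coincide with their conjugates, which is consistent with the data since then $f(\lambda_i)=\lambda_i$. Finally, if $S$ happened to have fewer than $2n$ points, the degree bound only improves. I do not expect a genuine obstacle here; the key step is the conjugation-symmetry argument that gives real coefficients.
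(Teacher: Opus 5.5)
Your proof is correct and follows the same route as the paper: the paper's set $W_1$ is exactly your conjugation-closed set $S$, and it also interpolates $\overline{z}$ on it through the invertible Vandermonde system, which gives the bound $\deg p\le |S|-1\le 2n-1$. The only difference is the reality step. The paper multiplies the system by the block matrix $T$ so that both the coefficient matrix and the right-hand side become real. You instead note that $\overline{q}$ solves the same interpolation problem, so uniqueness gives $\overline{q}=q$; this is the same symmetry argument, written more directly.
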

\begin{proof}
We choose a subset $W$ of $\{\lambda_1,\ldots,\lambda_n\}$ containing
exactly one representative from each class under the equivalence relation
\[
    z\sim w
    \quad\Longleftrightarrow\quad
    z=w\ \text{or}\ z=\overline{w}.
\]
Thus, for any two distinct elements $q_s,q_t\in W$, we have
$q_s\neq q_t$ and $q_s\neq\overline{q_t}$; moreover, for every
$\lambda_i$, there exists some $q_s\in W$ such that
$\lambda_i=q_s$ or $\lambda_i=\overline{q_s}$. We may write
\[
    W=\{q_1,\ldots,q_k\},\qquad 1\leq k\leq n.
\]

We define a new set $W_1$ based on $W$ and make an agreement as follows,

\begin{enumerate}
\item if $q_i\in W$ is nonreal, then $q_i$ and $\overline{q_i}$ are both in $W_1$,
\item if $q_j\in W$ is real, then $q_j$ itself is in $W_1$.
\end{enumerate}
That is, the real elements in $W$ appear only once in $W_1$ and the complex numbers in $W$ appear in conjugate pairs in $W_1$.
 We assign new symbols to the elements in the set $W_1$ and still denote it as set $W_1$. Let $W_1=\{ c_1,\overline{c_1},\cdots,c_l,\overline{c_l}, r_1,\cdots,r_h \}$, then the number of elements in $W_1$ is equal to $2l+h$, which is at most $2n$.
Define the vandermonde matrix $D$ as follows,
$$
D=\begin{pmatrix}
1 & c_1 & \cdots &c_{1}^{2l+h-1}\\
1 & \overline{c_1} & \cdots &\overline{c_{1}}^{2l+h-1}\\
\cdots&\cdots&\cdots&\cdots\\
1 & c_l & \cdots &c_{l}^{2l+h-1} \\
1 & \overline{c_l} & \cdots &\overline{c_{l}}^{2l+h-1} \\
1& r_1 & \cdots &r_{1}^{2l+h-1}\\
\cdots&\cdots&\cdots&\cdots\\
1& r_h & \cdots &r_{h}^{2l+h-1}
\end{pmatrix}.
$$
It can be known that $D$ is invertible by the definition of $W_1$. Then there exists a complex coefficient polynomial $p$ of degree  $2l+h-1$ such that
$$
p(x)=\overline{x}=a_{2l+h-1}x^{2l+h-1}+a_{2l+h-2}x^{2l+h-2}+\cdots+a_{1}x+a_{0}
$$
 for each $x\in W_1$. That is,
 $$
 D\begin{pmatrix}
a_0 \\
a_1\\
\cdots\\
\cdots\\
\cdots\\
\cdots\\
\cdots\\
a_{2l+h-1}
\end{pmatrix}=\begin{pmatrix}
\overline{c_1} \\
c_1\\
\cdots\\
\overline{c_l}\\
c_l\\
r_1\\
\cdots\\
r_h
\end{pmatrix}.
 $$
Let
$$
M=\begin{pmatrix}
\frac{1}{\boldsymbol{i}} & -\frac{1}{\boldsymbol{i}}\\
1 & 1
\end{pmatrix}, \ T=\begin{pmatrix}
M &  &       & & \\
  &M &       & & \\
  &  &\ddots & & \\
  &  &       &M& \\
  &  &       & &I_{h}
\end{pmatrix},
$$
it can be seen both $M$ and $T$ are invertible. Therefore, we have
$$
 TD\begin{pmatrix}
a_0 \\
a_1\\
\cdots\\
\cdots\\
\cdots\\
\cdots\\
\cdots\\
a_{2l+h-1}
\end{pmatrix}=T\begin{pmatrix}
\overline{c_1} \\
c_1\\
\cdots\\
\overline{c_l}\\
c_l\\
r_1\\
\cdots\\
r_h
\end{pmatrix}.
 $$
That is,

$$
\small\begin{pmatrix}
2 & c_1+\overline{c_1} & \cdots &c_{1}^{2l+h-1}+\overline{c_{1}}^{2l+h-1}\\
0 & \frac{1}{\boldsymbol{i}}(c_1-\overline{c_1}) & \cdots &\frac{1}{\boldsymbol{i}}(c_{1}^{2l+h-1}-\overline{c_{1}}^{2l+h-1})\\
\cdots&\cdots&\cdots&\cdots\\
2 & c_l+\overline{c_l} & \cdots &c_{l}^{2l+h-1}+\overline{c_{l}}^{2l+h-1} \\
0 & \frac{1}{\boldsymbol{i}}(c_l-\overline{c_l}) & \cdots &\frac{1}{\boldsymbol{i}}(c_{l}^{2l+h-1}-\overline{c_{l}}^{2l+h-1}) \\
1& r_1 & \cdots &r_{1}^{2l+h-1}\\
\cdots&\cdots&\cdots&\cdots\\
1& r_h & \cdots &r_{h}^{2l+h-1}
\end{pmatrix}\begin{pmatrix}
a_0 \\
a_1\\
\cdots\\
\cdots\\
\cdots\\
\cdots\\
\cdots\\
a_{2l+h-1}
\end{pmatrix}=\begin{pmatrix}
c_1+\overline{c_1} \\
\frac{1}{\boldsymbol{i}}(c_1-\overline{c_1})\\
\cdots\\
c_l+\overline{c_l}\\
\frac{1}{\boldsymbol{i}}(c_l-\overline{c_l})\\
r_1\\
\cdots\\
r_h
\end{pmatrix}\small.
$$
Note that all the elements in the matrix on the left side of the equation and the column vector on the right side of the equation are all real numbers. Therefore, all the coefficients of $p$ are also real numbers, which confirms our conclusion.
\end{proof}
\begin{proposition}\label{2026.1.5.3}
Let $A\in M_{n}(\mathbb{H})$. Then $A$ is normal if and only if there exists a polynomial of degree no more than $2n-1$ with real coefficients such that $A^*=p(A)$.
\end{proposition}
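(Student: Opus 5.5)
We prove both directions, the main tool being the known spectral theorem for normal quaternionic matrices recalled in the introduction: every normal $A\in M_n(\mathbb{H})$ is unitarily equivalent to a complex diagonal matrix.

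\emph{Sufficiency.} Suppose $A^*=p(A)$ with $p$ a real-coefficient polynomial. Real scalars commute with every quaternionic matrix, so $p(A)=\sum_k a_kA^k$ is a real linear combination of powers of $A$ and commutes with $A$. Hence $A^*A=p(A)A=Ap(A)=AA^*$, and $A$ is normal. Note that this genuinely uses that the coefficients are real. For quaternionic coefficients the expression $p(A)$ is not even well defined in a canonical way, and left multiplication by a non-real scalar need not commute with $A$.

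\emph{Necessity.} Let $A$ be normal. Choose a unitary $U\in M_n(\mathbb{H})$ and a complex diagonal matrix $D=\mathrm{diag}(\lambda_1,\dots,\lambda_n)$ with $A=UDU^*$. By Lemma \ref{1011}, there is a real-coefficient polynomial $p$ of degree at most $2n-1$ with $p(\lambda_i)=\overline{\lambda_i}$ for $1\le i\le n$.

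Since the coefficients of $p$ are real, they commute with $U$ and $U^*$. Using $U^*U=I$, we get $(UDU^*)^k=UD^kU^*$, and therefore
$$p(A)=U\,p(D)\,U^*.$$
Because $D$ is diagonal with complex entries and $p$ has real coefficients, $p(D)=\mathrm{diag}(p(\lambda_1),\dots,p(\lambda_n))=\mathrm{diag}(\overline{\lambda_1},\dots,\overline{\lambda_n})=D^*$. Consequently
$$p(A)=UD^*U^*=(UDU^*)^*=A^*,$$
which proves the claim.

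The only delicate point is to keep the coefficients of $p$ real throughout. This is exactly what makes $p(A)$ well defined over $\mathbb{H}$ and lets it commute with unitary conjugation. Lemma \ref{1011} supplies precisely this: it handles the conjugate-pair issue by interpolating $\overline{\lambda}$ on both $\lambda$ and $\overline{\lambda}$, which is why the degree bound is $2n-1$ rather than $n-1$.
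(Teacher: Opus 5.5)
Your proof is correct and follows essentially the same route as the paper. For necessity you diagonalize $A$ unitarily to a complex diagonal matrix and use Lemma~\ref{1011} to get a real-coefficient interpolating polynomial, then conjugate back. For sufficiency you use that a real-coefficient polynomial in $A$ commutes with $A$.
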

\begin{proof}
We prove the necessity first.

If $A$ is normal, then there exists a unitary matrix $U$ such that
$$
U^{*}AU=\mathrm{diag}(\lambda_1,\cdots,\lambda_n),
$$
where $\lambda_i\in \mathbb{C}$ are right eigenvalues of $A$. By Lemma \ref{1011}, there is a real coefficient polynomial $p$ of degree at most $2n-1$ such that $p(\lambda_i)=\overline{\lambda_i}$. Then
$$
\begin{aligned}
A^*&=U \mathrm{diag}(\overline{\lambda_1},\cdots,\overline{\lambda_n})U^*\\
&=U \mathrm{diag}(p(\lambda_1),\cdots,p(\lambda_n))U^*\\
&=Up(\mathrm{diag}(\lambda_1,\cdots,\lambda_n))U^*\\
&=p(U \mathrm{diag}(\lambda_1,\cdots,\lambda_n)U^*)\\
&=p(A).
\end{aligned}
$$
Now we prove the sufficiency. If there exists a polynomial $p$ of degree at most $2n-1$ with real coefficients such that $A^*=p(A)$, then
$$
A^{*}A=p(A)A=Ap(A)=AA^*,
$$
which implies $A$ is normal.
\end{proof}
\begin{corollary}\label{2026.1.5.2}
If two normal matrices $A,B\in M_{n}(\mathbb{H})$ are similar, then they are unitarily equivalent.
\end{corollary}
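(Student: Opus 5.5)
The plan is to combine Proposition \ref{2026.1.5.3} with the implication $(3)\Rightarrow(1)$ of Theorem \ref{shui}. Suppose $A,B\in M_{n}(\mathbb{H})$ are normal and $P^{-1}AP=B$ for some invertible $P$. By Theorem \ref{shui} it suffices to show that the families $\{A,A^{*}\}$ and $\{B,B^{*}\}$ are similar via the same $P$. Concretely, we need $P^{-1}A^{*}P=B^{*}$ in addition to $P^{-1}AP=B$.

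To obtain this, we want a single real-coefficient polynomial $p$ with $A^{*}=p(A)$ and $B^{*}=p(B)$ simultaneously. Since $A$ and $B$ are normal, each is unitarily equivalent to a complex diagonal matrix. Their diagonal entries are right eigenvalues, which are determined up to complex conjugation. Similar matrices have the same right eigenvalues, so we may take the diagonal forms $U^{*}AU=\mathrm{diag}(\lambda_1,\dots,\lambda_n)$ and $V^{*}BV=\mathrm{diag}(\mu_1,\dots,\mu_n)$. Here each $\mu_i$ equals some $\lambda_j$ or $\overline{\lambda_j}$.

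We then apply Lemma \ref{1011} to the combined list $\lambda_1,\dots,\lambda_n,\mu_1,\dots,\mu_n$, or simply to the $\lambda$'s. In the latter case, a real polynomial satisfying $p(\lambda)=\overline{\lambda}$ automatically satisfies $p(\overline{\lambda})=\overline{p(\lambda)}=\lambda$, so it also works on every $\mu_i$. The computation in the proof of Proposition \ref{2026.1.5.3} then gives $A^{*}=p(A)$ and $B^{*}=p(B)$ for this same $p$. That computation uses the fact that real scalars commute with quaternionic matrices, so $p(UDU^{*})=Up(D)U^{*}$.

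Finally,
$$P^{-1}A^{*}P=P^{-1}p(A)P=p(P^{-1}AP)=p(B)=B^{*},$$
where pulling $p$ through the conjugation again relies on the coefficients of $p$ being real. Theorem \ref{shui} now yields that $A$ and $B$ are unitarily equivalent.

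The only delicate point is ensuring one common polynomial serves both $A$ and $B$. This rests on the spectra of $A$ and $B$ agreeing up to conjugation, which holds because they are similar and right eigenvalues are similarity invariants up to conjugacy classes. It also rests on the real coefficients, which make $p$ compatible with conjugation.
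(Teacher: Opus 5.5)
Your proposal is correct and follows essentially the same route as the paper: one real-coefficient polynomial $p$ from Lemma \ref{1011} gives both $A^{*}=p(A)$ and $B^{*}=p(B)$, so $P^{-1}A^{*}P=p(P^{-1}AP)=p(B)=B^{*}$, and Theorem \ref{shui} finishes the argument. Your observation that $p(\overline{\lambda})=\overline{p(\lambda)}=\lambda$ for real $p$ makes explicit why a single polynomial works for both matrices, whose diagonal entries may differ by conjugation; the paper passes over this point by simply fixing common representatives $\lambda_1,\dots,\lambda_n$.
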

\begin{proof}
If $A$ and $B$ are similar, then the right eigenvalues of $A$ and $B$ are the same. Assume complex numbers $\lambda_1,\cdots,\lambda_n$ are representative elements of all the right eigenvalues of $A$ and $B$. Then there exists an invertible matrix $Q$ such that
\begin{equation}\label{2026.1.5.4}
Q^{-1}AQ=B.
\end{equation}
By Lemma~2.3, there exists a polynomial $p$ with real
coefficients and degree at most $2n-1$ such that
\[
p(\lambda_i)=\overline{\lambda_i},
\qquad i=1,\ldots,n.
\]

Since $A$ and $B$ are both normal, the proof of Proposition \ref{2026.1.5.3} yields 
$$
A^*=p(A),\ B^*=p(B).
$$
 Therefore,
 \begin{equation}\label{2026.1.5.5}
 Q^{-1}A^{*}Q=Q^{-1}p(A)Q=p(Q^{-1}AQ)=p(B)=B^{*}.
 \end{equation}
(\ref{2026.1.5.4}) and (\ref{2026.1.5.5}) imply that the families $\{A, A^{*}\}$ and $\{B, B^{*}\}$ are similar, then by Theorem \ref{shui}, $A$ and $B$ are unitarily equivalent.
\end{proof}
For $A,B \in M_{n}(\mathbb{H})$, let
\begin{equation}\label{2026.1.5.6}
A=H_1+H_2,\ B=G_1+G_2
\end{equation}
 where $H_1=\frac{1}{2}(A+A^*)$, $H_2=\frac{1}{2}(A-A^*)$, $G_1=\frac{1}{2}(B+B^*)$ and $G_2=\frac{1}{2}(B-B^*)$. It is obvious that $H_1$ and $G_1$ are Hermitian, $H_2$ and $G_2$ are skew-Hermitian. In fact, $H_1$ and $G_1$, $H_2$ and $G_2$ are all special kinds of normal matrices. Based on this decomposition method for quaternionic matrices, we obtain an alternative equivalent characterization of unitary equivalence for any two  matrices
 $A,B\in M_{n}(\mathbb{H})$.
\begin{theorem}\label{2026.2.5.10}
For $A,B \in M_{n}(\mathbb{H})$, assume $A$ and $B$ have the decomposition as (\ref{2026.1.5.6}). Then the following statements are equivalent.
\begin{enumerate}
\item [(1)] A and B are unitarily equivalent.

\item [(2)] The families $\{ H_1,H_{2}\}$ and $\{ G_1,G_2\}$ are unitarily equivalent.

\item [(3)] The families $\{ H_1,H_2\}$ and $\{ G_1,G_2\}$ are similar.

\end{enumerate}
\end{theorem}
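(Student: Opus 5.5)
We prove the cycle $(1)\Rightarrow(2)\Rightarrow(3)\Rightarrow(1)$. The first two implications are routine; the real content of $(3)\Rightarrow(1)$ is reduced to Theorem \ref{shui}.

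\textbf{$(1)\Rightarrow(2)$.} Suppose $U^*AU=B$ for some unitary $U$. Taking conjugate transposes gives $U^*A^*U=B^*$. Adding and subtracting these two identities and dividing by $2$ gives
$$
U^*H_1U=G_1,\qquad U^*H_2U=G_2 .
$$

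\textbf{$(2)\Rightarrow(3)$.} This is immediate, since a unitary matrix is invertible with $U^{-1}=U^*$.

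\textbf{$(3)\Rightarrow(1)$.} Let $P$ be invertible with $P^{-1}H_1P=G_1$ and $P^{-1}H_2P=G_2$. Since $A=H_1+H_2$ and $A^*=H_1-H_2$ (using $H_1^*=H_1$ and $H_2^*=-H_2$), and similarly $B=G_1+G_2$ and $B^*=G_1-G_2$, linearity of conjugation by $P$ gives
$$
P^{-1}AP=G_1+G_2=B,\qquad P^{-1}A^*P=G_1-G_2=B^*.
$$
Thus the families $\{A,A^*\}$ and $\{B,B^*\}$ are similar. By the implication $(3)\Rightarrow(1)$ of Theorem \ref{shui}, $A$ and $B$ are unitarily equivalent.

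The only step needing care is expressing $A^*$ and $B^*$ in terms of $H_1,H_2$ and $G_1,G_2$ via their Hermitian and skew-Hermitian properties; the rest is linearity. The substantive work, namely the polar decomposition and functional calculus argument, is already contained in Theorem \ref{shui}, so no further obstacle is expected.
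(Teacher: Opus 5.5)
Your proposal is correct and follows essentially the paper's own argument. The key step is the same: rewrite $A^*=H_1-H_2$ and $B^*=G_1-G_2$, so that similarity of $\{H_1,H_2\}$ and $\{G_1,G_2\}$ gives similarity of $\{A,A^*\}$ and $\{B,B^*\}$, and then invoke Theorem \ref{shui}. The only difference is bookkeeping: the paper runs the cycle $(3)\Rightarrow(2)\Rightarrow(1)\Rightarrow(3)$ and recovers $U^*H_iU=G_i$ from $U^*AU=B$ inside the hard step, whereas you do that recovery separately in $(1)\Rightarrow(2)$.
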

\begin{proof}
$(3)\Rightarrow (2)$. Assume the invertible matrix $P$ satisfies
\begin{equation}\label{2026.1.5.7}
P^{-1}H_{1}P=G_{1},\ P^{-1}H_{2}P=G_{2}.
\end{equation}
Since $H_1$ and $G_1$ are Hermitian, while $H_2$ and $G_2$
are skew-Hermitian, we have
\[
H_1^*=H_1,\qquad H_2^*=-H_2,\qquad
G_1^*=G_1,\qquad G_2^*=-G_2.
\]
Therefore, (\ref{2026.1.5.7}) directly implies
$$
P^{-1}H_1^*P=G_1^*,\qquad
P^{-1}H_2^*P=G_2^*. 
$$
Moreover,
\[
P^{-1}AP
 =P^{-1}(H_1+H_2)P
 =G_1+G_2
 =B,
\]
and
\[
P^{-1}A^*P
 =P^{-1}(H_1-H_2)P
 =G_1-G_2
 =B^*.
\]
Thus, the families $\{A,A^*\}$ and $\{B,B^*\}$ are similar.
By Theorem \ref{shui}, there exists a unitary matrix $U$ such that
\[
U^*AU=B.
\]
Consequently, $U^*A^*U=B^*$, and hence
\begin{equation}\label{2026.1.5.9}
\begin{split}
  U^*H_1U &=\frac12 U^*(A+A^*)U
 =\frac12(B+B^*)
 =G_1,  \\
 U^*H_2U
 &=\frac12 U^*(A-A^*)U
 =\frac12(B-B^*)
 =G_2. 
\end{split}
\end{equation}

Therefore, the families $\{H_1,H_2\}$ and $\{G_1,G_2\}$
are unitarily equivalent.

$(2)\Rightarrow (1)$. Assume $U$ is the unitary matrix that satisfies (\ref{2026.1.5.9}), then
$$
U^{*}AU=U^{*}(H_{1}+H_{2})U=G_{1}+G_{2}=B.
$$
That is, $A$ is unitarily equivalent to $B$.

$(1)\Rightarrow (3)$. The proof of this direction is obvious.
\end{proof}

\section{Generalized Autonne-Takagi factorization for $\phi$-Hermitian matrices}

In this section, we mainly focus on providing a detailed exposition of the diagonalization of quaternion $\phi$-Hermitian matrices under unitary $\phi$-congruence transformations, where $\phi$ is a unit quaternion. Prior to that, we first show some basic properties that may be used later.

\begin{lemma}
Assume $\phi\in \mathbb{S}^{3}$,  $A, B\in M_n(\mathbb{H})$. Then the following statements hold.
\begin{enumerate}

    \item  $(\overline{A}^{\phi})^* = \overline{A^*}^{\phi}=A^{*{\phi}}$, $(A^{*{\phi}})^{*}=(A^*)^{*{\phi}}$.

    \item If $A$ is invertible, then $(\overline{A}^{\phi})^{-1}=\overline{A^{-1}}^{\phi}$, $(A^{*{\phi}})^{-1}=(A^{-1})^{*{\phi}}$.

    \item $\overline{A\boldsymbol{x}}^{\phi}=\overline{A}^{\phi}\overline{\boldsymbol{x}}^{\phi}$ for every $\boldsymbol{x}\in\mathbb{H}^n$.
    \item $\overline{A+B}^{\phi}=\overline{A}^{\phi}+\overline{B}^{\phi}$, $(A+B)^{*{\phi}}=A^{*{\phi}}+B^{*{\phi}}$.
    \item $\overline{AB}^{\phi}=\overline{A}^{\phi}\cdot\overline{B}^{\phi}$,  $(AB)^{*{\phi}}= B^{*{\phi}}A^{*{\phi}}$.

\item $\overline{A^T}^{\phi}=(\overline{A}^{\phi})^T=\overline{A^{*{\phi}}}=(\overline{A})^{*{\phi}}$,

\item $(A^{*{\phi}})^{T}=(A^{T})^{*{\phi}}=\overline{\overline{A}^{\phi}}=\overline{\overline{A}}^{\phi}  $,
\end{enumerate}
\end{lemma}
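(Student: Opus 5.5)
The plan is to verify all seven identities directly from the definitions $\overline{A}^{\phi}=\overline{\phi}A\phi$ and $A^{*\phi}=\overline{\phi}A^*\phi$. I will use only three facts. The first is $\phi\overline{\phi}=\overline{\phi}\phi=|\phi|^2=1$, since $\phi\in\mathbb{S}^3$. The second is the quaternion identity $\overline{pq}=\overline{q}\,\overline{p}$, which iterates to $\overline{\overline{\phi}a\phi}=\overline{\phi}\,\overline{a}\,\phi$ for any $a\in\mathbb{H}$. The third is that left and right multiplication of a matrix by the scalar $\phi$ or $\overline{\phi}$ acts entrywise: the $(i,j)$ entry of $\overline{\phi}A\phi$ is $\overline{\phi}a_{ij}\phi$.

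Items 1, 6 and 7 are entrywise statements about a single matrix. I will compute the $(i,j)$ entry of each side and compare.
\begin{itemize}
\item For item 1, the $(i,j)$ entry of $(\overline{A}^{\phi})^*$ is $\overline{\overline{\phi}a_{ji}\phi}=\overline{\phi}\,\overline{a_{ji}}\,\phi$. This is exactly the $(i,j)$ entry of $\overline{\phi}A^*\phi=\overline{A^*}^{\phi}=A^{*\phi}$. Applying the same computation to $A^*$ in place of $A$ gives $(A^{*\phi})^*=(A^*)^{*\phi}$.
\item For item 6, each of the four expressions has $(i,j)$ entry $\overline{\phi}a_{ji}\phi$. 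For $\overline{A^{*\phi}}$ this uses $\overline{\overline{\phi}\,\overline{a_{ji}}\,\phi}=\overline{\phi}a_{ji}\phi$.
\item For item 7, each of the four expressions has $(i,j)$ entry $\overline{\phi}\,\overline{a_{ij}}\,\phi$.
\end{itemize}
I will be careful here not to use any product rule for transposes, since $(AB)^T\neq B^TA^T$ in general over $\mathbb{H}$. That is why items 6 and 7 are only claimed for a single matrix.

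Items 3, 4 and 5 are algebraic and follow by inserting $1=\phi\overline{\phi}$.
\begin{itemize}
\item Item 4 is immediate from distributivity.
\item For item 5, write $\overline{A}^{\phi}\,\overline{B}^{\phi}=\overline{\phi}A\phi\overline{\phi}B\phi=\overline{\phi}AB\phi=\overline{AB}^{\phi}$. Then $(AB)^{*\phi}=\overline{\phi}B^*A^*\phi=\overline{\phi}B^*\phi\,\overline{\phi}A^*\phi=B^{*\phi}A^{*\phi}$.
\item Item 3 is the special case of item 5 in which $B$ is the column vector $\boldsymbol{x}$; the same computation works for rectangular matrices.
\end{itemize}
Item 2 then follows from item 5. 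Applying it to $AA^{-1}=A^{-1}A=I$ gives $\overline{A}^{\phi}\,\overline{A^{-1}}^{\phi}=\overline{I}^{\phi}=\overline{\phi}\phi I=I$, and similarly in the other order. The $*\phi$ version follows in the same way, using $(A^{-1})^*=(A^*)^{-1}$.

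The only real obstacle I expect is bookkeeping with noncommutativity. At each step I must keep $\overline{\phi}$ on the left and $\phi$ on the right, and I must use $\overline{pq}=\overline{q}\,\overline{p}$ rather than commuting factors. Once the entrywise formula for $\overline{\phi}A\phi$ is written down, every item reduces to a one-line check.
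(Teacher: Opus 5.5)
Your proposal is correct. The entrywise comparisons settle items 1, 6 and 7, and inserting $\phi\overline{\phi}=1$ settles items 2--5; this also uses $(AB)^*=B^*A^*$, which does hold over $\mathbb{H}$, unlike the transpose rule you rightly avoid. The paper states this lemma without proof as a routine check, and your argument is exactly that verification.
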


\begin{lemma}[\cite{Z}]\label{1151}
If $a$ and $b$ are two nonzero quaternions, then $a\sim b$ if and only if $|a|=|b|$ and $Re(a)=Re(b)$.
\end{lemma}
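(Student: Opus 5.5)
We prove Lemma \ref{1151} in two directions. Write $a=a_0+\boldsymbol{a}$ with $a_0=\mathrm{Re}(a)$ and $\boldsymbol{a}=\mathrm{Im}(a)$, and similarly $b=b_0+\boldsymbol{b}$. Here $a\sim b$ means $b=q^{-1}aq$ for some nonzero quaternion $q$.

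\emph{Necessity.} Modulus is multiplicative, so
$$|q^{-1}aq|=|q|^{-1}|a||q|=|a|.$$
For the real part, use the identity $\mathrm{Re}(xy)=\mathrm{Re}(yx)$. It follows from a direct check: the real part of $xy$ is $x_0y_0-\boldsymbol{x}\cdot\boldsymbol{y}$, which is symmetric in $x$ and $y$. Hence
$$\mathrm{Re}(q^{-1}(aq))=\mathrm{Re}((aq)q^{-1})=\mathrm{Re}(a).$$

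\emph{Sufficiency.} From $|a|=|b|$ and $a_0=b_0$ we get $|\boldsymbol{a}|=|\boldsymbol{b}|$. If this common value is $0$, then $a=b$ and we are done. Otherwise it suffices to find a unit $q$ with $\overline{q}\boldsymbol{a}q=\boldsymbol{b}$, since real scalars commute with $q$. Normalize to unit pure quaternions $u=\boldsymbol{a}/|\boldsymbol{a}|$ and $v=\boldsymbol{b}/|\boldsymbol{b}|$.

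\begin{itemize}
\item If $u\neq -v$, take $q=(u+v)/|u+v|$. For unit pure $u$ we have $u^2=-1$ and $\overline{u}=-u$. Expanding gives
$$\overline{(u+v)}\,u\,(u+v)=-(u+v)u(u+v).$$
Using $u^2=v^2=-1$ and $uv+vu=-2u\cdot v$, a short computation shows this equals $|u+v|^2v$. Equivalently, one may check $u(u+v)=(u+v)v$ directly: both sides equal $-1+uv$. This gives $q^{-1}uq=v$.
\item If $u=-v$, take $q$ to be any unit pure quaternion orthogonal to $u$. Then $qu=-uq$, so $q^{-1}uq=-u=v$.
\end{itemize}

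The main step requiring care is this construction of the conjugating quaternion. In particular, the antipodal case must be handled separately, because the formula $q=(u+v)/|u+v|$ degenerates when $u+v=0$.
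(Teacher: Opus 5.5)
Your proof is correct and complete: the necessity argument via multiplicativity of the modulus and $\mathrm{Re}(xy)=\mathrm{Re}(yx)$ is sound, and the conjugator $q=(u+v)/|u+v|$ (using $u(u+v)=(u+v)v$), together with a unit pure $q\perp u$ in the antipodal case $u=-v$, does what you claim. The paper gives no proof of this lemma (it simply quotes \cite{Z}), and your argument, an explicit conjugating unit quaternion with the degenerate case treated separately, is essentially the standard proof of that result.
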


\begin{lemma}\label{1154}
Let $\phi\in\mathbb{S}^{3}$. If $a$ and $b$ are two nonzero quaternions, then $a\sim_{\phi} b$ if and only if $|a|=|b|$ and $Re(a\overline{\phi})=Re(b\overline{\phi})$, or equivalently, $Re(\phi a)=Re(\phi b)$.
\end{lemma}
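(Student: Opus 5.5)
The plan is to reduce $\phi$-similarity of quaternions to ordinary similarity and then apply Lemma \ref{1151}. For $1\times 1$ matrices, $a\sim_{\phi} b$ means there is a nonzero quaternion $p$ with
\[
p\,a\,(\overline{p}^{\phi})^{-1}=b .
\]
Since $\overline{p}^{\phi}=\overline{\phi}p\phi$ and $|\phi|=1$ gives $\phi^{-1}=\overline{\phi}$, we get $(\overline{p}^{\phi})^{-1}=\overline{\phi}\,p^{-1}\phi$. The defining relation therefore becomes
\[
p\,a\,\overline{\phi}\,p^{-1}\,\phi=b .
\]
Right-multiplying by $\overline{\phi}$ turns this into
\[
p\,(a\overline{\phi})\,p^{-1}=b\overline{\phi},
\]
and the step is reversible by multiplying back by $\phi$. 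Hence $a\sim_{\phi} b$ if and only if $a\overline{\phi}\sim b\overline{\phi}$ in the ordinary sense. This is the scalar analogue of Lemma \ref{264141}.

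Next I apply Lemma \ref{1151}. The quaternions $a\overline{\phi}$ and $b\overline{\phi}$ are nonzero, so they are similar exactly when
\[
|a\overline{\phi}|=|b\overline{\phi}| \quad\text{and}\quad \mathrm{Re}(a\overline{\phi})=\mathrm{Re}(b\overline{\phi}).
\]
Because the modulus is multiplicative and $|\phi|=1$, the first condition is simply $|a|=|b|$. This gives the first characterization.

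For the equivalent form, I use the standard identity $\mathrm{Re}(xy)=\mathrm{Re}(yx)$ together with the fact that real parts are invariant under conjugation. These give
\[
\mathrm{Re}(a\overline{\phi})=\mathrm{Re}\bigl(\overline{a\overline{\phi}}\bigr)=\mathrm{Re}(\phi\overline{a}).
\]
At first sight this is not $\mathrm{Re}(\phi a)$, so the equivalence seems to need more care. Expanding $\phi=\phi_0+\vec\phi$ and $a=a_0+\vec a$ gives
\[
\mathrm{Re}(a\overline{\phi})=a_0\phi_0+\vec a\cdot\vec\phi, \qquad \mathrm{Re}(\phi a)=\phi_0a_0-\vec\phi\cdot\vec a .
\]
These two expressions differ in general, so the pointwise equality fails and I must compare the conditions themselves. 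Since $|a|=|b|$ holds in both, the real question is whether
\[
a_0\phi_0+\vec a\cdot\vec\phi=b_0\phi_0+\vec b\cdot\vec\phi \iff a_0\phi_0-\vec a\cdot\vec\phi=b_0\phi_0-\vec b\cdot\vec\phi .
\]
I expect the lemma intends the $\mathrm{Re}(\phi a)$ form to arise from the alternative reduction $\phi a\sim\phi b$. To obtain it, I would run the same argument with the $\phi$ moved to the left: left-multiplying the defining relation by $\phi$ gives
\[
(\phi p)\,a\,\overline{\phi}\,p^{-1}\,\phi=\phi b ,
\]
which I would then try to rewrite as a conjugation $q(\phi a)q^{-1}=\phi b$ for a suitable $q$. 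The main obstacle is exactly this step, namely checking that the two real-part conditions really are equivalent rather than merely analogous. I would first test the form $a\overline{\phi}$ versus $\phi a$ on small examples such as $\phi=\boldsymbol{i}$ before committing to it.
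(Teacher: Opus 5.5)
Your reduction of $a\sim_{\phi}b$ to $p(a\overline{\phi})p^{-1}=b\overline{\phi}$, followed by Lemma \ref{1151} and $|a\overline{\phi}|=|a|$, is correct and is exactly the paper's argument for the first characterization. The part you leave open, the ``equivalently, $\mathrm{Re}(\phi a)=\mathrm{Re}(\phi b)$'' clause, cannot be closed. Your suspicion is right: the clause is false for every $\phi\in\mathbb{S}^{\prime}$, so the rewriting $q(\phi a)q^{-1}=\phi b$ you are aiming for does not exist in general.

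For instance, let $\phi=(1+\boldsymbol{i})/\sqrt{2}$, $a=1$, $b=\boldsymbol{i}=\phi^{2}$. With $p=\boldsymbol{j}$ one gets $\overline{p}^{\phi}=\overline{\phi}\boldsymbol{j}\phi=-\boldsymbol{k}$, hence $pa(\overline{p}^{\phi})^{-1}=\boldsymbol{j}\boldsymbol{k}=\boldsymbol{i}=b$. So $a\sim_{\phi}b$, consistent with $|a|=|b|$ and $\mathrm{Re}(a\overline{\phi})=\mathrm{Re}(b\overline{\phi})=1/\sqrt{2}$. Yet $\mathrm{Re}(\phi a)=1/\sqrt{2}\neq-1/\sqrt{2}=\mathrm{Re}(\phi b)$.

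The failure is general. Write $\phi=\cos\theta+I_{\phi}\sin\theta$. Then $1\sim_{\phi}\phi^{2}$ always holds, since $1\cdot\overline{\phi}=\overline{\phi}\sim\phi=\phi^{2}\overline{\phi}$. But $\mathrm{Re}(\phi)=\cos\theta$ and $\mathrm{Re}(\phi\cdot\phi^{2})=\cos3\theta$ agree only when $\theta$ is a multiple of $\pi/2$, i.e.\ only when $\phi\in\mathbb{S}\cup\{\pm1\}$.

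Your proposed test $\phi=\boldsymbol{i}$ would not have revealed this. When $\phi_{0}=0$ your two expansions are negatives of each other, so the two conditions coincide. This is exactly why the clause holds on $\mathbb{S}$, where $\overline{\phi}=-\phi$ and $a\overline{\phi}=-a\phi$ is similar to $-\phi a$.

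The correct alternative form for arbitrary $\phi\in\mathbb{S}^{3}$ is $\mathrm{Re}(\overline{\phi}a)=\mathrm{Re}(\overline{\phi}b)$, which is immediate from $\mathrm{Re}(xy)=\mathrm{Re}(yx)$. The paper's proof does not address the second clause at all; it stops after invoking Lemma \ref{1151}. So the defect is in the statement, not in your method. To finish:
\begin{itemize}
\item record the first characterization as you proved it;
\item state the corrected form $\mathrm{Re}(\overline{\phi}a)=\mathrm{Re}(\overline{\phi}b)$;
\item restrict the $\mathrm{Re}(\phi a)$ version to $\phi\in\mathbb{S}$, citing the example above for $\mathbb{S}^{\prime}$.
\end{itemize}
The same identification $a\sim_{\phi}c\iff\phi a\sim\phi c$ is used in the proof of Lemma \ref{815}, so that argument is likewise valid only for $\phi\in\mathbb{S}$.
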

\begin{proof}
Assume $a\sim_{\phi} b$, then there exists a nonzero quaternion $p$ such that $pa(\overline{p}^{\phi})^{-1}=b$. This is equivalent to $p(a\overline{\phi})p^{-1}=b\overline{\phi}$, which implies the conclusion by Lemma \ref{1151}. The other direction can be proved by Lemma \ref{1151}.
\end{proof}

Equations of the type $ax+b=xc$ have been considered in \cite{RE1944}. It is shown that
if $a, b, c$ are quaternions and  $a$ is not similar to $c$, then $ax+b=xc$ has a
 solution in $\mathbb{H}$.

\begin{lemma}\label{815}
Let $\phi\in\mathbb{S}^{3}$, let a, b, c be quaternions and  a is not $\phi$-similar to c. Then the equation $ax+b=\overline{x}^{\phi}c$ has a solution in $\mathbb{H}$.
\end{lemma}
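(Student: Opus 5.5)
The plan is to reduce the $\phi$-twisted equation to the classical Sylvester-type equation $a'y+b=yc'$ treated in \cite{RE1944}, by absorbing the factor $\overline{\phi}$ into the unknown. Since $\overline{x}^{\phi}=\overline{\phi}x\phi$, the equation reads
$$
ax+b=\overline{\phi}\,x\,\phi c .
$$
I substitute $y=\overline{\phi}x$, equivalently $x=\phi y$, using $|\phi|=1$. The left side becomes $(a\phi)y+b$. The right side becomes $\overline{\phi}\phi y\phi c=y(\phi c)$. So the equation is equivalent to
$$
(a\phi)\,y+b=y\,(\phi c),
$$
and any solution $y$ yields the solution $x=\phi y$ of the original equation.

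By the result of \cite{RE1944} quoted before the statement, this equation has a solution provided $a\phi$ is not similar to $\phi c$. So the key step is to show that $a\not\sim_{\phi}c$ implies $a\phi\not\sim\phi c$.

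Suppose first that $a$ and $c$ are both nonzero. By Lemma~\ref{1151}, $a\phi\sim\phi c$ holds if and only if $|a\phi|=|\phi c|$ and $\mathrm{Re}(a\phi)=\mathrm{Re}(\phi c)$. Since $|\phi|=1$, the first condition is $|a|=|c|$. Using $\mathrm{Re}(pq)=\mathrm{Re}(qp)$, we have $\mathrm{Re}(a\phi)=\mathrm{Re}(\phi a)$, so the second condition is $\mathrm{Re}(\phi a)=\mathrm{Re}(\phi c)$. By Lemma~\ref{1154}, these two conditions together are exactly $a\sim_{\phi}c$. Hence $a\not\sim_{\phi}c$ gives $a\phi\not\sim\phi c$.

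The only point needing care is the zero case, since Lemmas~\ref{1151} and \ref{1154} are stated for nonzero quaternions.
\begin{itemize}
\item If $a=c=0$, then $a\sim_{\phi}c$ trivially, so this case is excluded by hypothesis.
\item If exactly one of $a,c$ is zero, then exactly one of $a\phi$ and $\phi c$ is zero. A zero quaternion is similar only to zero, so $a\phi\not\sim\phi c$.
\end{itemize}

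In all admissible cases, \cite{RE1944} therefore gives $y\in\mathbb{H}$ with $(a\phi)y+b=y(\phi c)$, and $x=\phi y$ solves $ax+b=\overline{x}^{\phi}c$. I expect no real obstacle. The step most worth checking is the identity $\mathrm{Re}(a\phi)=\mathrm{Re}(\phi a)$ linking the similarity invariants of $a\phi$ to the $\phi$-similarity criterion of Lemma~\ref{1154}.
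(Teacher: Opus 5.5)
\textbf{There is a genuine gap, and it cannot be closed: the statement is false for $\phi\in\mathbb{S}^{\prime}$.}

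Your substitution $x=\phi y$ is correct. It is essentially the paper's reduction: the paper instead left-multiplies by $\phi$ to reach $(\phi a)x+\phi b=x(\phi c)$. In both versions, Johnson's theorem needs $a\phi\not\sim\phi c$, equivalently $\phi a\not\sim\phi c$.

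The gap is the step ``by Lemma~\ref{1154}, these two conditions together are exactly $a\sim_{\phi}c$.'' There you use the clause ``or equivalently, $\mathrm{Re}(\phi a)=\mathrm{Re}(\phi b)$,'' and that clause does not follow from the definition. Since $(\overline{p}^{\phi})^{-1}=\overline{\phi}p^{-1}\phi$, the relation $pa(\overline{p}^{\phi})^{-1}=c$ is equivalent to $p(a\overline{\phi})p^{-1}=c\overline{\phi}$. So $\phi$-similarity is governed by $\mathrm{Re}(a\overline{\phi})=\phi_0a_0+\phi_1a_1+\phi_2a_2+\phi_3a_3$. The similarity class of $a\phi$, by contrast, is governed by $\mathrm{Re}(a\phi)=\mathrm{Re}(\phi a)=\phi_0a_0-\phi_1a_1-\phi_2a_2-\phi_3a_3$. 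The two criteria agree when $\phi\in\mathbb{S}$ or $\phi=\pm1$, because then $\overline{\phi}=\pm\phi$. They disagree for $\phi\in\mathbb{S}^{\prime}$. The paper's proof asserts the same equivalence, ``$a\sim_{\phi}c$ if and only if $\phi a\sim\phi c$,'' so it has the same hole.

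Here is a counterexample to the statement. Take $\phi=(1+\boldsymbol{i})/\sqrt{2}$, $a=1$, $c=-\boldsymbol{i}$, $b=\boldsymbol{j}$.
\begin{itemize}
\item $a\overline{\phi}=(1-\boldsymbol{i})/\sqrt{2}$ and $c\overline{\phi}=-(1+\boldsymbol{i})/\sqrt{2}$ have real parts $1/\sqrt{2}$ and $-1/\sqrt{2}$, so $a\not\sim_{\phi}c$.
\item Yet $a\phi=\phi$ and $\phi c=\overline{\phi}$ are similar.
\item Write $x=z_1+z_2\boldsymbol{j}$ with $z_1,z_2$ in the span of $1$ and $\boldsymbol{i}$. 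Using $\boldsymbol{j}\phi=\overline{\phi}\boldsymbol{j}$ and $\overline{\phi}^{2}=-\boldsymbol{i}$, you get $\overline{x}^{\phi}=z_1-\boldsymbol{i}z_2\boldsymbol{j}$, hence $\overline{x}^{\phi}c=-\boldsymbol{i}z_1+z_2\boldsymbol{j}$.
\item Meanwhile $ax+b=z_1+(z_2+1)\boldsymbol{j}$. Comparing $\boldsymbol{j}$-components forces $z_2+1=z_2$, so the equation has no solution.
\end{itemize}
The same failure occurs for every $\phi\in\mathbb{S}^{\prime}$ with $a=1$ and $c=\overline{\phi}^{2}$. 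Then $\phi a=\phi\sim\overline{\phi}=\phi c$, while $\mathrm{Re}(\overline{\phi})\neq\mathrm{Re}(\overline{\phi}^{3})$ because $\phi_0\notin\{0,\pm1\}$.

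Your argument, including your handling of the zero cases, is valid as written when $\phi\in\mathbb{S}\cup\{\pm1\}$. In that case $a\overline{\phi}\sim c\overline{\phi}\iff a\phi\sim c\phi\iff a\phi\sim\phi c$. For a general unit $\phi$, the correct hypothesis is $\phi a\not\sim\phi c$, not $a\not\sim_{\phi}c$.
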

\begin{proof}
Notice the truth that $a$ and $c$ are not $\phi$-similar is equivalent to $\phi a$ and $\phi c$ are not similar, since $a\sim_{\phi}c$ if and only if $\phi a\sim\phi c$. Assume the equation
$$
(\phi a)x+d=x(\phi c)
$$
has a solution $x_{0}$ for any quaternion $d$. Let $d=\phi b$, then we obtain $ax_{0}+b=\overline{x_0}^{\phi}c$.
\end{proof}

For a matrix $A\in M_{n}(\mathbb{H})$, if $A$ is in triangular form (upper or lower), then every diagonal element is a right eigenvalue of $A$. In fact, if $A$ is in triangular form, then every diagonal element is a right $\phi$-eigenvalue of $A$ for all $\phi\in\mathbb{S}^{3}$. The proof is absolutely analogous to the process of right eigenvalues of a diagonal quaternionic matrix in \cite{JL51} and the conclusion of Lemma \ref{815} may be put to use.

\begin{theorem}\label{26416}
Let $A\in M_n(\mathbb{H})$, $\phi\in\mathbb{S}^{3}$. If $A$ is of triangular form, then all the right $\phi$-eigenvalues of $A$ are exactly all the diagonal elements of $A$ together with all the quaternions $\phi$-similar to the diagonal elements.
\end{theorem}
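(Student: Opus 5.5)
The plan is to prove two inclusions: every diagonal entry (and everything $\phi$-similar to it) is a right $\phi$-eigenvalue, and conversely every right $\phi$-eigenvalue is $\phi$-similar to some diagonal entry. I treat the upper triangular case; the lower triangular case is the mirror image, with the index order reversed throughout. Write $A=(a_{ij})$ and use $\overline{\boldsymbol{v}}^{\phi}=\overline{\phi}\boldsymbol{v}\phi$ entrywise.

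\textbf{Step 1: closure under $\phi$-similarity.} Suppose $A\overline{\boldsymbol{v}}^{\phi}=\boldsymbol{v}\lambda$ and $q\neq 0$. Put $\boldsymbol{u}=\boldsymbol{v}q$. Then
$$A\overline{\boldsymbol{u}}^{\phi}=A\overline{\phi}\boldsymbol{v}\phi\,\overline{\phi}q\phi=\boldsymbol{v}\lambda\overline{q}^{\phi}=\boldsymbol{u}\,q^{-1}\lambda\overline{q}^{\phi}.$$
Hence $q^{-1}\lambda(\overline{q^{-1}}^{\phi})^{-1}$ is also a right $\phi$-eigenvalue. Since $q$ is arbitrary, the set of right $\phi$-eigenvalues is a union of $\phi$-similarity classes. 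It therefore suffices to prove that each $a_{kk}$ is a right $\phi$-eigenvalue, and that each right $\phi$-eigenvalue is $\phi$-similar to some $a_{kk}$.

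\textbf{Step 2: every right $\phi$-eigenvalue is $\phi$-similar to a diagonal entry.} Let $A\overline{\boldsymbol{v}}^{\phi}=\boldsymbol{v}\lambda$ with $\boldsymbol{v}\neq 0$, and let $k$ be the largest index with $v_k\neq0$. Row $k$ of the equation reads $a_{kk}\overline{v_k}^{\phi}=v_k\lambda$. This gives $\lambda=p\,a_{kk}(\overline{p}^{\phi})^{-1}$ with $p=v_k^{-1}$, so $\lambda\sim_\phi a_{kk}$. Here Lemma 2.8(2) is used, in the scalar case, to rewrite $(\overline{v_k}^{\phi})^{-1}$.

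\textbf{Step 3: each $a_{kk}$ is a right $\phi$-eigenvalue (the main step).} Among all indices whose diagonal entry is $\phi$-similar to $a_{kk}$, take the smallest one. After renaming, we may assume that $a_{ii}\not\sim_\phi a_{kk}$ for every $i<k$. If $a_{kk}=0$, then $a_{ii}\neq0$ for $i<k$, and Lemma \ref{1154} is not needed in that degenerate case; the argument below still works. Set $v_j=0$ for $j>k$, $v_k=1$ and $\lambda=a_{kk}$. Row $k$ then holds, because $\overline{1}^{\phi}=1$. We then solve rows $i=k-1,\dots,1$ backwards. Row $i$ has the form
$$a_{ii}\overline{\phi}v_i\phi+b_i=v_i a_{kk},$$
where $b_i$ is already determined by $v_{i+1},\dots,v_k$.

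The obstacle is that this equation has the shape $a\overline{x}^{\phi}+b=xc$, which is not literally the form treated in Lemma \ref{815}. I would handle it directly. The map $L(x)=xc-a\overline{\phi}x\phi$ is $\mathbb{R}$-linear on $\mathbb{H}\cong\mathbb{R}^4$, so it suffices to show that $L$ is injective. Suppose $L(x)=0$ with $x\neq0$. Multiplying $xc=a\overline{\phi}x\phi$ on the right by $\overline{\phi}$ gives $x(c\overline{\phi})=(a\overline{\phi})x$. Thus $a\overline{\phi}\sim c\overline{\phi}$, and by Lemma \ref{1154} this means $a\sim_\phi c$. This contradicts the choice of $k$ (when both are nonzero; if exactly one of them is zero, injectivity is immediate by comparing moduli).

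Hence each row equation has a solution $v_i$, and the resulting $\boldsymbol{v}\neq0$ satisfies $A\overline{\boldsymbol{v}}^{\phi}=\boldsymbol{v}a_{kk}$. Combining Steps 1–3 gives the theorem.
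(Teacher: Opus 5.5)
Your proof is correct. It follows the skeleton the paper has in mind: the paper only says the argument is analogous to Brenner's back-substitution for triangular matrices and that Lemma~\ref{815} ``may be put to use.'' Where you differ is that you replace the appeal to Lemma~\ref{815} by a self-contained argument, and this is a substantive improvement.

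As you noticed, the row equations have the form $a\overline{x}^{\phi}+b=xc$, not the form $ax+b=\overline{x}^{\phi}c$ treated in Lemma~\ref{815}. Your map $x\mapsto xc-a\overline{\phi}x\phi$ is $\mathbb{R}$-linear on $\mathbb{H}\cong\mathbb{R}^{4}$. A nonzero kernel vector $x$ forces $x^{-1}(a\overline{\phi})x=c\overline{\phi}$, i.e.\ $a\sim_{\phi}c$ via $p=x^{-1}$. This gives solvability under exactly the hypothesis your minimal-index choice provides, for every $\phi\in\mathbb{S}^{3}$, without Johnson's theorem. The same conjugation step also makes your separate treatment of zero entries unnecessary.

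To go through Lemma~\ref{815} instead, one would substitute $y=\overline{x}^{\phi}$ to get $ay+b=\overline{y}^{\,\overline{\phi}}c$. One then has to watch the hypothesis. The reduction in the proof of Lemma~\ref{815} needs $\phi a\not\sim\phi c$, whereas $a\sim_{\phi}c$ is equivalent to $\overline{\phi}a\sim\overline{\phi}c$. These conditions agree when $\mathrm{Re}\,\phi=0$ but not for $\phi\in\mathbb{S}^{\prime}$. For instance, with $\phi=(1+\boldsymbol{i})/\sqrt{2}$, $a=1$, $c=-\boldsymbol{i}$, one has $a\not\sim_{\phi}c$, yet $ax+\boldsymbol{j}=\overline{x}^{\phi}c$ has no solution. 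So Lemma~\ref{815} as stated fails for non-pure $\phi$.

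Your direct route is therefore the one that actually covers non-pure $\phi$. Your Step~1 (closure of the set of right $\phi$-eigenvalues under $\phi$-similarity) and the reduction to the smallest $\phi$-similar index also make explicit details the paper leaves implicit.
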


\begin{lemma}[\cite{Z}]\label{2026.1.13.1}
Let $A\in M_{n}(\mathbb{H})$. Then there exists a unitary matrix $U\in M_{n}(\mathbb{H})$ such that $UAU^{*}$ is an upper triangular matrix with diagonal entries $h_{1}+k_{1}\boldsymbol{i},\cdots,h_{n}+k_{n}\boldsymbol{i}$ are right eigenvalues of $A$ and $k_{t}\geq 0$ for $1\leq t\leq n$.
\end{lemma}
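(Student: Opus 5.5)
We prove the Schur-type statement of Lemma \ref{2026.1.13.1} by induction on $n$, reducing each step to a single unit right eigenvector. The case $n=1$ is immediate. Write $A=a\in\mathbb{H}$. By Lemma \ref{1151}, $a$ is similar to the complex number $h+k\boldsymbol{i}$ with $h=\mathrm{Re}(a)$ and $k=|\mathrm{Im}(a)|\geq 0$. The similarity can be realized by a unit quaternion $u$, since if $pap^{-1}=c$ then $(p/|p|)a(p/|p|)^{-1}=c$. Hence $uau^{*}=h+k\boldsymbol{i}$.

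For the inductive step, we first need a right eigenvalue. We use the complex representation $A_{\mathbb{C}}$ from the proof of Theorem \ref{shui}, which has a complex eigenvalue $\lambda$ with eigenvector $(x_1^T,x_2^T)^T$. The standard correspondence then produces a nonzero $\boldsymbol{v}\in\mathbb{H}^n$ with $A\boldsymbol{v}=\boldsymbol{v}\lambda$.
\begin{itemize}
\item Matching conventions with $T=T_1+\boldsymbol{j}T_2$, we take $\boldsymbol{v}=x_1+\boldsymbol{j}x_2$.
\item Checking $A\boldsymbol{v}=\boldsymbol{v}\lambda$ uses $\boldsymbol{j}z=\overline{z}\boldsymbol{j}$ for $z\in\mathbb{C}$.
\end{itemize}
This is the only genuinely computational step. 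We then replace $\boldsymbol{v}$ by $\boldsymbol{v}q$ for a unit quaternion $q$ with $q^{-1}\lambda q=h+k\boldsymbol{i}$ and $k\geq0$, which exists by Lemma \ref{1151}. This gives a right eigenvector for $h+k\boldsymbol{i}$, which we normalize to unit length.

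Next, we extend $\boldsymbol{v}$ to an orthonormal basis $\boldsymbol{v},\boldsymbol{w}_2,\dots,\boldsymbol{w}_n$ of $\mathbb{H}^n$, viewed as a right $\mathbb{H}$-module with the inner product $\boldsymbol{v}^{*}\boldsymbol{u}$. We do this by Gram--Schmidt, dividing on the right by norms. Let $V=(\boldsymbol{v},\boldsymbol{w}_2,\dots,\boldsymbol{w}_n)$, which is unitary. The first column of $V^{*}AV$ is $V^{*}\boldsymbol{v}(h+k\boldsymbol{i})=(h+k\boldsymbol{i},0,\dots,0)^T$. Therefore
$$
V^{*}AV=\begin{pmatrix} h+k\boldsymbol{i} & \ast\\ 0 & A_1\end{pmatrix},
$$
with $A_1\in M_{n-1}(\mathbb{H})$.

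By the induction hypothesis there is a unitary $U_1$ with $U_1A_1U_1^{*}$ upper triangular and diagonal entries of the required form. Setting $U=\mathrm{diag}(1,U_1)V^{*}$ makes $UAU^{*}$ upper triangular with diagonal entries $h_t+k_t\boldsymbol{i}$, $k_t\geq 0$. These entries are right eigenvalues of $A$. Indeed, each is a right eigenvalue of the triangular matrix $UAU^{*}$, by Theorem \ref{26416} with $\phi=1$ (or the analogous classical fact). Moreover, right eigenvalues are preserved under similarity, since $A\boldsymbol{x}=\boldsymbol{x}\mu$ implies $(UAU^{*})(U\boldsymbol{x})=(U\boldsymbol{x})\mu$.

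The main obstacle is the existence of a right eigenvalue. That is the step requiring the complex representation, and the rest of the argument is routine. Care is also needed to keep all scalars acting on the right, so that the Gram--Schmidt process and the eigen-relations stay consistent.
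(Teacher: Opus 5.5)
Your proof is correct. The paper quotes this lemma from \cite{Z} without proof, and your argument is essentially the standard quaternionic Schur argument found there: get a complex right eigenvalue from the complex representation, conjugate it by a unit quaternion so its imaginary part is nonnegative, complete a unit eigenvector to a unitary matrix, deflate, and induct. Two points are cosmetic only. The zero cases excluded from Lemma \ref{1151} are trivial. In the last step the inclusion you actually need is $\sigma_r(UAU^{*})\subseteq\sigma_r(A)$, which follows from the same computation with $U^{*}$ in place of $U$.
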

\begin{corollary}[\cite{Z}]
Let $A\in M_{n}(\mathbb{H})$. Then $A$ is normal if and only if there exists a unitary matrix $U\in M_{n}(\mathbb{H})$ such that
$$
UAU^{*}=\mathrm{diag}(h_{1}+k_{1}\boldsymbol{i},\cdots,h_{n}+k_{n}\boldsymbol{i})
$$
and $A$ is Hermitian if and only if $k_{1}=\cdots=k_{n}=0$.
\end{corollary}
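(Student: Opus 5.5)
The corollary is deduced from the Schur-type triangularization in Lemma \ref{2026.1.13.1}, taking the two claims in turn.

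\emph{Normality.} For the ``if'' direction, suppose $UAU^{*}=D$ with $D$ complex diagonal. Then $A=U^{*}DU$, and
$$
A^{*}A=U^{*}D^{*}DU=U^{*}DD^{*}U=AA^{*},
$$
since complex diagonal matrices commute. For the ``only if'' direction, assume $A$ is normal.

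1. By Lemma \ref{2026.1.13.1}, choose a unitary $U$ with $T=UAU^{*}$ upper triangular. Its diagonal entries are $h_t+k_t\boldsymbol{i}$ with $k_t\ge 0$.
2. Normality is preserved by unitary equivalence: $T^{*}T=UA^{*}AU^{*}=UAA^{*}U^{*}=TT^{*}$.
3. Show that a normal upper triangular quaternionic matrix is diagonal, using the classical row-norm argument.
4. Compare the $(1,1)$ entries of $T^{*}T$ and $TT^{*}$. The first is $|t_{11}|^{2}$ and the second is $\sum_{j}|t_{1j}|^{2}$. Hence $t_{1j}=0$ for $j>1$.
5. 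Induct on the remaining trailing block. This works because deleting the first row and column of a normal triangular matrix whose first row is zero off the diagonal again gives a normal triangular matrix.

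In step 4 the quaternionic setting causes no difficulty: $\overline{q}q=q\overline{q}=|q|^{2}$ is real, so each diagonal entry of a product $T^{*}T$ or $TT^{*}$ is a sum of squared moduli.

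\emph{Hermitian case.} If $A$ is Hermitian, it is in particular normal, so the diagonalization above applies. Then $D=UAU^{*}$ satisfies $D=D^{*}$, so each $h_t+k_t\boldsymbol{i}$ equals its conjugate. This forces $k_t=0$. Conversely, if all $k_t=0$, then $D$ is real diagonal, and $A=U^{*}DU$ satisfies $A^{*}=U^{*}D^{*}U=A$.

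The main (though mild) obstacle is step 3, checking that the triangular-normal-implies-diagonal argument transfers to the noncommutative setting. The only computations involved are diagonal entries of $T^{*}T$ and $TT^{*}$, which reduce to sums of squared moduli. Everything else follows directly from Lemma \ref{2026.1.13.1}.
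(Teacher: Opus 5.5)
Your proof is correct and follows the same route as the paper. The paper quotes this corollary from \cite{Z} without proof, placing it immediately after the quaternionic Schur triangularization (Lemma \ref{2026.1.13.1}); it is obtained exactly as you do it, by showing that a normal upper triangular quaternionic matrix is diagonal and then reading off the Hermitian case from $D=D^{*}$.
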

The analogous conclusions can be obtained by Lemma \ref{264141}.
\begin{corollary}
Let $A\in M_n(\mathbb{H})$, $\phi\in\mathbb{S}^{3}$. Then there exists a unitary matrix $U\in M_n(\mathbb{H})$ such that $UAU^{*\phi}$ is an upper triangular matrix.
\end{corollary}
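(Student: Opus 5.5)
The plan is to reduce the statement to Lemma \ref{2026.1.13.1} through the correspondence in Lemma \ref{264141}. In that correspondence, unitary $\phi$-congruence of $A$ becomes unitary equivalence of $A\overline{\phi}$. We will apply the unitary triangularization to $A\overline{\phi}$ and then undo the multiplication by $\overline{\phi}$.

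The key identity comes from $U^{*\phi}=\overline{\phi}U^*\phi$ and $|\phi|=1$ (so $\phi\overline{\phi}=1$). For any unitary $U$ we get
$$
UAU^{*\phi}\,\overline{\phi}=UA\overline{\phi}U^*\phi\overline{\phi}=U(A\overline{\phi})U^*,
$$
that is, $UAU^{*\phi}=U(A\overline{\phi})U^*\,\phi$. This is the computation that underlies Lemma \ref{264141}, and we will verify it directly.

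Next apply Lemma \ref{2026.1.13.1} to the matrix $A\overline{\phi}\in M_n(\mathbb{H})$. This gives a unitary $U$ such that $T:=U(A\overline{\phi})U^*$ is upper triangular. Then $UAU^{*\phi}=T\phi$.

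Right multiplication by the scalar $\phi$ multiplies every entry of $T$ on the right by $\phi$. Zero entries stay zero, so $T\phi$ is still upper triangular, with diagonal entries $t_{ii}\phi$. This completes the argument.

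No step is a genuine obstacle. The only care needed is to keep track of the side on which $\phi$ acts, since quaternions do not commute: the scalar must be moved through the product using only $\phi\overline{\phi}=1$ and associativity, never commutativity. If one also wants to describe the diagonal, the entries are $t_{ii}\phi$ with $t_{ii}$ right eigenvalues of $A\overline{\phi}$. By Theorem \ref{26416}, these are then right $\phi$-eigenvalues of $UAU^{*\phi}$.
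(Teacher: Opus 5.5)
Your proof is correct and follows the paper's route: the paper obtains this corollary from Lemma \ref{264141}, which turns unitary $\phi$-congruence of $A$ into unitary equivalence of $A\overline{\phi}$, together with the unitary triangularization of Lemma \ref{2026.1.13.1}, exactly as you do. Your direct check that $UAU^{*\phi}=U(A\overline{\phi})U^{*}\phi$ uses only $\phi\overline{\phi}=1$ and associativity, and together with the observation that right multiplication by $\phi$ keeps a matrix upper triangular, it supplies the details the paper leaves implicit.
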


\begin{corollary}\label{11101}
Let $A\in M_n(\mathbb{H})$, $\phi\in\mathbb{S}$. Then $A$ is $\phi$-conjugate normal if and only if $A$ is unitarily $\phi$-congruent to a diagonal matrix.
\end{corollary}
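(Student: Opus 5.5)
The plan is to reduce $\phi$-conjugate normality of $A$ to ordinary normality of $A\overline{\phi}$, and then combine Lemma \ref{264141} with the unitary diagonalization of normal quaternionic matrices (the corollary quoted from \cite{Z}).

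\emph{Step 1 (the key identity).} Since $\phi\in\mathbb{S}$ is a pure imaginary unit quaternion, we have $\overline{\phi}=-\phi$, $\phi\overline{\phi}=\overline{\phi}\phi=1$, and $\phi$ commutes with real scalars. Put $B=A\overline{\phi}$. Then $B^*=\phi A^*$, and
$$
B^*B=\phi A^*A\overline{\phi}=(-\overline{\phi})A^*A(-\phi)=\overline{\phi}A^*A\phi=\overline{A^*A}^{\phi},
\qquad
BB^*=A\overline{\phi}\phi A^*=AA^*.
$$
Hence $A$ is $\phi$-conjugate normal if and only if $A\overline{\phi}$ is normal. This is the only place where $\phi\in\mathbb{S}$ is used: the signs cancel only because $\overline{\phi}=-\phi$. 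I regard this identity as the crux; everything else is bookkeeping.

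\emph{Step 2 (necessity).} Suppose $A$ is $\phi$-conjugate normal. By Step 1, $A\overline{\phi}$ is normal, so by the corollary from \cite{Z} there is a unitary $U$ with $U(A\overline{\phi})U^*=D$, where $D$ is a (complex) diagonal matrix. Set $\Lambda=D\phi$. This is diagonal, because right multiplication by the scalar $\phi$ acts entrywise, and it satisfies $\Lambda\overline{\phi}=D$. Thus $A\overline{\phi}\sim_u\Lambda\overline{\phi}$, and Lemma \ref{264141} gives $A\sim_{\phi-u}\Lambda$.

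\emph{Step 3 (sufficiency).} Conversely, suppose $A\sim_{\phi-u}\Lambda$ with $\Lambda=\mathrm{diag}(d_1,\dots,d_n)$, $d_t\in\mathbb{H}$. By Lemma \ref{264141}, $A\overline{\phi}\sim_u\Lambda\overline{\phi}$.
\begin{itemize}
\item The matrix $\Lambda\overline{\phi}=\mathrm{diag}(d_1\overline{\phi},\dots,d_n\overline{\phi})$ is diagonal.
\item Every diagonal quaternionic matrix is normal, since $\overline{q}q=q\overline{q}=|q|^2$ for each entry $q$.
\item Normality is preserved under unitary equivalence.
\end{itemize}
Therefore $A\overline{\phi}$ is normal, and by Step 1, $A$ is $\phi$-conjugate normal.

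The one point to double-check is that the convention for $\sim_u$ in Lemma \ref{264141} matches the form $UXU^*$ in the \cite{Z} corollary. Since $U$ may be replaced by $U^*$, this causes no difficulty.
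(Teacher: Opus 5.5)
Your proof is correct and follows essentially the paper's route. The paper obtains this corollary directly from Lemma \ref{264141} together with the unitary diagonalization of normal quaternionic matrices, using (as spelled out in Remark \ref{264142}) that $\phi$-conjugate normality of $A$ is the same as normality of $\phi A$, equivalently of $A\overline{\phi}$ when $\phi\in\mathbb{S}$; your Step~1 just makes explicit the identity the paper leaves unstated.
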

\begin{remark}\label{264142}
As well known, every normal quaternionic matrix can be unitarily equivalent to a complex diagonal matrix. From Corollary \ref{11101}, we assume $A$ is $\phi$-conjugate normal, then $\phi A$ is normal. Let $U$ be the unitary matrix such that
\begin{equation}\label{2026.1.7.1}
U(\phi A)U^{*}=\mathrm{diag}(\phi\lambda_1,\cdots,\phi\lambda_n),
\end{equation}
where the $\phi\lambda_s$ is a complex number with non negative imaginary part for each $1\leq s\leq n$. (\ref{2026.1.7.1}) is equivalent to
$$
\overline{U}^{\phi}AU^{*}=\overline{\phi}U(\phi A)U^{*}=\mathrm{diag}(\lambda_1,\cdots,\lambda_n).
$$
Therefore, every $\phi$-conjugate normal quaternionic matrix $A$ can be unitarily $\phi$-congruent to a diagonal matrix $T_{A}=\mathrm{diag}(\lambda_1,\cdots,\lambda_n)$ where the $\lambda_{s}$ satisfies $\phi\lambda_{s}$ is a complex number with non negative imaginary part for each $1\leq s\leq n$. We say $T_{A}$ is the canonical form of $A$ under unitary $\phi$-congruence.
\end{remark}
At the end of this section, we will answer the question R. Horn and F. Zhang asked in \cite{HZ}. We will discuss in two parts according to whether the real part of $\phi$ is zero. First, we discuss the case that $\phi\in \mathbb{S}^{\prime}$. In this case, $\overline{\phi}=\phi^{-1}$.

Now we introduce and explain some notations. For $\phi\in\mathbb{H}$, define $\{\phi\}^{\prime}=\{ q\in\mathbb{H}; q\phi=\phi q\}$. That is,  $\{\phi\}^{\prime}$ is the commutant of $\phi$. For $\phi\in\mathbb{H}\setminus\mathbb{R}$, define $\phi=\phi_{0}+r_{\phi}I_{\phi}$, where $I_{\phi}$ is the imaginary unit of $\phi$ and $r_{\phi}\in \mathbb{R}$ is the coefficient of $I_{\phi}$. Denote by $\mathbb{C}_{I_{\phi}}=\{ a+bI_{\phi};a,b\in\mathbb{R}\}$. Then $\{\phi\}^{\prime}=\mathbb{C}_{I_{\phi}}$ for $\phi\in\mathbb{H}\setminus \mathbb{R}$.

In fact, for a fixed $\phi$, $\mathbb{C}_{I_{\phi}}$ is isomorphic to $\mathbb{C}$. We only need to establish a one-to-one correspondence between $a+bI_{\phi}$ and $a+b\boldsymbol{i}$. We use the notation $M_{n}(\mathbb{C}_{I_{\phi}})$ to denote the set of $n\times n$ quaternionic matrices whose entries belong to $\mathbb{C}_{I_{\phi}}$. That is
$$
M_{n}(\mathbb{C}_{I_{\phi}})=\{ A=(a_{ij})\in M_{n}(\mathbb{H});a_{ij}\in \mathbb{C}_{I_{\phi}} for ~i,j=1,\cdots,n\}.
$$
Essentially, $M_{n}(\mathbb{C}_{I_{\phi}})\cong M_{n}(\mathbb{C})$ and
\begin{enumerate}
\item [1.]$B\in M_{n}(\mathbb{C}_{I_{\phi}})$ if and only if $B\in M_{n}(\mathbb{C}_{I_{\overline{\phi}}})$,
\item [2.]$B\in M_{n}(\mathbb{C}_{I_{\phi}})$ if and only if $B^{\ast}\in M_{n}(\mathbb{C}_{I_{\phi}})$.
\end{enumerate}
\begin{lemma}\label{26414}
Assume a quaternion $\psi\in \mathbb{H}\setminus\mathbb{R}$. If a quaternion $\nu$ satisfies
$$
\psi\nu=\nu\psi,
$$
then $\nu\in \mathbb{C}_{I_{\psi}}$.
\end{lemma}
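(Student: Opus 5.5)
The plan is a direct computation on components. Write $\psi=\psi_{0}+r_{\psi}I_{\psi}$ with $r_{\psi}\neq 0$; this is possible because $\psi\notin\mathbb{R}$. Here $I_{\psi}$ is a unit pure imaginary quaternion, so $I_{\psi}^{2}=-1$. Real scalars commute with everything, so $\psi\nu=\nu\psi$ reduces to $r_{\psi}(I_{\psi}\nu-\nu I_{\psi})=0$. Since $r_{\psi}\neq0$, this gives $I_{\psi}\nu=\nu I_{\psi}$. It therefore suffices to show that the commutant of a unit pure imaginary quaternion $I$ is exactly $\mathbb{C}_{I}$.

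For this we first complete $I$ to a standard basis. We choose a unit pure imaginary $J$ orthogonal to $I$, regarding pure imaginary quaternions as vectors in $\mathbb{R}^{3}$, and set $K=IJ$. Then $I,J,K$ satisfy the same relations as $\boldsymbol{i},\boldsymbol{j},\boldsymbol{k}$. The facts needed are standard: for pure imaginary $u,v$ one has $uv=-u\cdot v+u\times v$, so orthogonal unit imaginaries anticommute and their product is again a unit imaginary orthogonal to both. We then expand $\nu=a+bI+cJ+dK$ with $a,b,c,d\in\mathbb{R}$. Using $IJ=-JI=K$ and $IK=-KI=-J$, we get
$$I\nu-\nu I=2cK-2dJ.$$
So $I\nu=\nu I$ forces $c=d=0$, which means $\nu=a+bI\in\mathbb{C}_{I_{\psi}}$.

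A shorter alternative is to avoid the basis. Decompose $\nu=\nu_{0}+\mathrm{Im}(\nu)$. For pure imaginary $u,v$, the formula above gives $uv-vu=2\,u\times v$. Hence $I\nu=\nu I$ is equivalent to $I\times\mathrm{Im}(\nu)=0$, which says that $\mathrm{Im}(\nu)$ is a real multiple of $I$. I would present this version, citing the product formula for pure quaternions.

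There is no real obstacle in this argument. The only point needing care is the justification that $\{1,I,J,K\}$ (or the cross-product identity) behaves like the standard basis. This rests on the fact that conjugation by a unit quaternion is a rotation of the imaginary space, or it can be checked directly from Hamilton's relations.
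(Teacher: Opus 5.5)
Your proof is correct, and both versions work. The paper does not actually prove this lemma. It is stated immediately after the unproved assertion that $\{\phi\}^{\prime}=\mathbb{C}_{I_{\phi}}$ for $\phi\in\mathbb{H}\setminus\mathbb{R}$, and it is used as a standard fact. So you are supplying an argument the paper omits rather than following or departing from one.

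The nearest comparison is the paper's proofs of the neighbouring Lemmas \ref{26413} and \ref{11201}, which expand in the fixed basis $1,\boldsymbol{i},\boldsymbol{j},\boldsymbol{k}$ and solve the resulting real linear system. Done that way here, the real part of $\psi\nu-\nu\psi$ vanishes automatically. The imaginary parts give $\psi_{2}\nu_{3}-\psi_{3}\nu_{2}=\psi_{3}\nu_{1}-\psi_{1}\nu_{3}=\psi_{1}\nu_{2}-\psi_{2}\nu_{1}=0$, i.e.\ $\mathrm{Im}(\psi)\times\mathrm{Im}(\nu)=0$. This is exactly your second version written in coordinates.

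That second version is the one worth presenting. It rests only on the identity $uv-vu=2\,u\times v$ for pure imaginary $u,v$, which is a direct check from Hamilton's relations. It also avoids the extra (correct) verification that an adapted frame $\{1,I,J,K\}$ obeys the same multiplication table as $\{1,\boldsymbol{i},\boldsymbol{j},\boldsymbol{k}\}$.

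You can also drop the preliminary reduction from $\psi$ to $I_{\psi}$. Real parts are central, so $\psi\nu-\nu\psi=2\,\mathrm{Im}(\psi)\times\mathrm{Im}(\nu)$ directly, and $\mathrm{Im}(\psi)\neq 0$ then forces $\mathrm{Im}(\nu)\in\mathbb{R}I_{\psi}$.
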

\begin{lemma}\label{26413}
Let $\psi$ be a quaternion with a non zero real part. If a quaternion $a$ satisfies
$$
\overline{\psi}a\psi=\overline{a},
$$
then $a\in\mathbb{R}$.
\end{lemma}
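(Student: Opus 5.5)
The plan is to split $a$ into its real and imaginary parts and compare the two sides of $\overline{\psi}a\psi=\overline{a}$ componentwise. The decisive fact will be that conjugating a pure imaginary quaternion by $\psi$ never reverses its sign when $\mathrm{Re}(\psi)\neq 0$. No earlier lemma is needed, only elementary quaternion algebra. I do not assume $|\psi|=1$; the argument below handles that.

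Write $a=a_0+v$ with $a_0\in\mathbb{R}$ and $v$ pure imaginary. Then
$$\overline{\psi}a\psi=|\psi|^2a_0+\overline{\psi}v\psi.$$
The term $\overline{\psi}v\psi$ is pure imaginary, because $\mathrm{Re}(\overline{\psi}v\psi)=\mathrm{Re}(v\psi\overline{\psi})=|\psi|^2\mathrm{Re}(v)=0$, using $\mathrm{Re}(xy)=\mathrm{Re}(yx)$. Since $\overline{a}=a_0-v$, comparing parts gives two equations:
$$|\psi|^2a_0=a_0 \quad\text{(real part)},\qquad \overline{\psi}v\psi=-v \quad\text{(imaginary part)}.$$
It remains to show $v=0$. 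Suppose instead $v\neq 0$. Taking moduli in the imaginary-part equation gives $|\psi|^2|v|=|v|$, so $|\psi|=1$ and $\overline{\psi}=\psi^{-1}$. The equation then becomes $v\psi=-\psi v$, that is, $v\psi+\psi v=0$.

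Now write $\psi=\psi_0+w$ with $w$ pure imaginary. For pure imaginary $v,w$ we have $vw+wv=-2\langle v,w\rangle$, where $\langle\cdot,\cdot\rangle$ is the real inner product on $\mathbb{R}^3$. This is a real number. Hence
$$v\psi+\psi v=2\psi_0v-2\langle v,w\rangle.$$
The imaginary part of this expression is $2\psi_0v$, and it must vanish. Since $\psi_0=\mathrm{Re}(\psi)\neq0$, this forces $v=0$, a contradiction. Therefore $v=0$ and $a=a_0\in\mathbb{R}$.

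The only step requiring care is the passage to $\psi^{-1}v\psi=-v$. This needs $|\psi|=1$, which the lemma does not assume. The modulus comparison supplies it automatically whenever $v\neq0$. Alternatively, one could assume $\psi\in\mathbb{S}^3$ as in the paper's setting.
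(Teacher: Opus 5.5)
Your proof is correct and follows essentially the paper's route. The paper also compares moduli to force $|\psi|=1$ (for $a\neq 0$) and rewrites the equation as $a\psi=\psi\overline{a}$, which is exactly your $v\psi+\psi v=0$ once the real part cancels; it then reads off $\psi_0a_t=0$ for $t=1,2,3$ from the imaginary components, whereas you do this coordinate-free with the inner product and skip the paper's unnecessary separate case of real $\psi$.
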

\begin{proof}
Let $\psi=\psi_{0}+\psi_{1}\boldsymbol{i}+\psi_{2}\boldsymbol{j}+\psi_{3}\boldsymbol{k}$ and $a=a_{0}+a_{1}\boldsymbol{i}+a_{2}\boldsymbol{j}+a_{3}\boldsymbol{k}$, where $\psi_{0}\neq 0$. 
If $a=0$, then $a\in\mathbb{R}$. If $a\neq 0$, then we have $|\psi|=1$. If $\psi\in\mathbb{R}$, then $\psi=\pm 1$. The conclusion holds evidently. If $\psi\notin\mathbb{R}$, then $\psi\in \mathbb{S}^{\prime}$.

Notice that the equality $\overline{\psi}a\psi=\overline{a}$ is equivalent to $a\psi=\psi\overline{a}$. From this, we obtain the following system of equations
\begin{displaymath}
\left \{ \begin{array}{ll}
a_{1}\psi_{1}+a_{2}\psi_{2}+a_{3}\psi_{3}=0\\
a_{1}\psi_{0}=0\\
a_{2}\psi_{0}=0\\
a_{3}\psi_{0}=0
\end{array} \right.
\end{displaymath}
Since $\psi_{0}\neq 0$, it follows that $a_{1}=a_{2}=a_{3}=0$, which means $a\in\mathbb{R}$.
\end{proof}
In fact, for $\phi\in \mathbb{S}^{\prime}$, $M_{n}(\mathbb{C}_{I_{\phi}})$ and $M_{n}(\mathbb{C}_{I_{\phi^{2}}})$ are exactly the same set. Thus, for the sake of convenience, in what follows we shall predominantly use the symbol $M_{n}(\mathbb{C}_{I_{\phi}})$ in place of $M_{n}(\mathbb{C}_{I_{\phi^{2}}})$ when $\phi\in \mathbb{S}^{\prime}$.
\begin{proposition}\label{26491}
Let $\phi\in \mathbb{S}^{\prime}$, $A\in M_{n}(\mathbb{H})$. If $A$ is $\phi$-Hermitian, then $A\in M_{n}(\mathbb{C}_{I_{\phi}})$.
\end{proposition}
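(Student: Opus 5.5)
The plan is to work entrywise and apply the $\phi$-Hermitian relation twice. This removes the conjugation and leaves a commutation relation with $\phi^{2}$, to which Lemma \ref{26414} applies.

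Write $A=(a_{st})$. The condition $A=A^{*\phi}=\overline{\phi}A^{*}\phi$ reads entrywise
$$
a_{st}=\overline{\phi}\,\overline{a_{ts}}\,\phi,\qquad 1\le s,t\le n.
$$
Applying this relation to $a_{ts}$ and substituting back gives
$$
a_{st}=\overline{\phi}\,\overline{\overline{\phi}\,\overline{a_{st}}\,\phi}\,\phi .
$$
Here I use that quaternion conjugation reverses products, so $\overline{\overline{\phi}\,x\,\phi}=\overline{\phi}\,\overline{x}\,\phi$. Hence $a_{st}=\overline{\phi}^{2}a_{st}\phi^{2}$. Since $|\phi|=1$, we have $\overline{\phi}^{2}=(\phi^{2})^{-1}$, and so $\phi^{2}a_{st}=a_{st}\phi^{2}$ for every $s,t$.

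Next I check that $\phi^{2}\notin\mathbb{R}$, which is where the hypothesis $\phi\in\mathbb{S}^{\prime}$ enters. Write $\phi=\phi_{0}+r_{\phi}I_{\phi}$ with $\phi_{0}\neq0$ (because $\phi\notin\mathbb{S}$) and $r_{\phi}\neq0$ (because $\phi\neq\pm1$). Then
$$
\phi^{2}=\phi_{0}^{2}-r_{\phi}^{2}+2\phi_{0}r_{\phi}I_{\phi},
$$
and its imaginary part $2\phi_{0}r_{\phi}I_{\phi}$ is nonzero. Thus $\phi^{2}\in\mathbb{H}\setminus\mathbb{R}$ with $I_{\phi^{2}}=\pm I_{\phi}$. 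Lemma \ref{26414} with $\psi=\phi^{2}$ then gives $a_{st}\in\mathbb{C}_{I_{\phi^{2}}}=\mathbb{C}_{I_{\phi}}$. This is exactly the identification of $M_{n}(\mathbb{C}_{I_{\phi}})$ with $M_{n}(\mathbb{C}_{I_{\phi^{2}}})$ noted before the statement, so $A\in M_{n}(\mathbb{C}_{I_{\phi}})$.

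The only delicate points are the order reversal of conjugation in the double substitution and the nonreality of $\phi^{2}$. The latter is precisely what fails when $\phi\in\mathbb{S}$, since then $\phi^{2}=-1$ and the argument gives no restriction. As a by-product, the diagonal entries are real: taking conjugates in $a_{ss}=\overline{\phi}\,\overline{a_{ss}}\,\phi$ gives $\overline{a_{ss}}=\overline{\phi}a_{ss}\phi$, and Lemma \ref{26413} applies. This is not needed for the proposition.
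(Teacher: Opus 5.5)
Your proof is correct and follows essentially the same route as the paper: use the $\phi$-Hermitian relation entrywise and twice to get $\phi^{2}a_{st}=a_{st}\phi^{2}$, then apply Lemma \ref{26414}. The differences are minor: the paper starts from the equivalent form $\overline{A}^{\phi}=A^{*}$ and handles the diagonal entries separately via Lemma \ref{26413}, whereas you treat all entries uniformly and make explicit the check $\phi^{2}\notin\mathbb{R}$ that the paper leaves implicit.
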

\begin{proof}
Since $A$ is $\phi$-Hermitian, we have ${\overline{A}^{\phi}}=A^{\ast}$. By comparing $\overline{A}^{\phi}$ and $A^{\ast}$ term by term, we obtain

\begin{equation}\label{26411}
\overline{\phi}a_{ii}\phi=\overline{a_{ii}}
\end{equation}
 for $1\leq i\leq n$ and
\begin{equation}\label{26412}
\overline{\phi}a_{ij}\phi=\overline{a_{ji}},\ \ \overline{\phi}a_{ji}\phi=\overline{a_{ij}}
\end{equation}
for all $1\leq i\neq j\leq n$.

From (\ref{26411}) we know that $a_{ii}\in\mathbb{R}$ for $1\leq i\leq n$ by Lemma \ref{26413}, and from (\ref{26412}) we obtain $a_{ij}\phi^{2}=\phi^{2}a_{ij}$ for all $1\leq i\neq j\leq n$. Therefore, $a_{ij}\in \mathbb{C}_{I_{\phi^{2}}}$ for $1\leq i,j\leq n$ by Lemma \ref{26414}, which implies $A\in M_{n}(\mathbb{C}_{I_{\phi^{2}}})$, or equivalently, $A\in M_{n}(\mathbb{C}_{I_{\phi}})$.
\end{proof}

Our main theorem in this part is as follows.

\begin{theorem}\label{26417}
Let $\phi\in \mathbb{S}^{\prime}$, $A\in M_{n}(\mathbb{H})$. If $A$ is $\phi$-Hermitian, then $A$ is Hermitian and there exists a unitary matrix $U\in M_{n}(\mathbb{C}_{I_{\phi}})$ and a real diagonal matrix $\Delta$ such that $\Delta=UAU^{\ast\phi}$.
\end{theorem}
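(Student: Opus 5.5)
By Proposition \ref{26491}, $A\in M_{n}(\mathbb{C}_{I_{\phi}})$, and every entry of $A$ commutes with $\phi$. The plan is to exploit this commutation twice: first to show that $\phi$-conjugation acts trivially on $A$, so $A$ is Hermitian; then to diagonalize $A$ as a complex Hermitian matrix inside $M_{n}(\mathbb{C}_{I_{\phi}})$ and check that the same unitary works for the $\phi$-congruence.

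\textbf{Step 1: $A$ is Hermitian.} Since $\phi\in\mathbb{S}^{\prime}$, we have $\phi\notin\mathbb{R}$, so $\phi=\phi_0+r_\phi I_\phi\in\mathbb{C}_{I_\phi}$ and $\overline{\phi}=\phi^{-1}$. The field $\mathbb{C}_{I_{\phi}}$ is commutative, so $\phi$ commutes with every entry of $A$. Hence
$$\overline{A}^{\phi}=\overline{\phi}A\phi=\overline{\phi}\phi A=A.$$
By the equivalence noted in the Remark (and used in the proof of Proposition \ref{26491}), $A$ being $\phi$-Hermitian means $\overline{A}^{\phi}=A^{*}$. Combining the two gives $A=A^{*}$.

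\textbf{Step 2: diagonalization.} Through the isomorphism $\mathbb{C}_{I_{\phi}}\cong\mathbb{C}$ sending $a+bI_\phi\mapsto a+b\boldsymbol{i}$, extended entrywise to $M_{n}(\mathbb{C}_{I_{\phi}})\cong M_{n}(\mathbb{C})$, products and the conjugate transpose $*$ are preserved. Conjugation in $\mathbb{C}_{I_\phi}$ is the restriction of quaternion conjugation, since $\overline{I_\phi}=-I_\phi$. So $A$ corresponds to a complex Hermitian matrix. The complex spectral theorem then gives a unitary $V\in M_n(\mathbb{C})$ with $VA'V^{*}=\Delta$, where $A'$ is the complex image of $A$ and $\Delta$ is real diagonal. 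Pulling $V$ back yields $U\in M_{n}(\mathbb{C}_{I_{\phi}})$, which is unitary as a quaternionic matrix because $UU^{*}=I$ is preserved by the isomorphism, and which satisfies $UAU^{*}=\Delta$.

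\textbf{Step 3: replacing $U^{*}$ by $U^{*\phi}$.} Because $U^{*}\in M_{n}(\mathbb{C}_{I_{\phi}})$, its entries commute with $\phi$, so
$$U^{*\phi}=\overline{\phi}U^{*}\phi=U^{*}.$$
Therefore $UAU^{*\phi}=UAU^{*}=\Delta$, which is the required factorization.

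The only point needing care is the compatibility of the isomorphism with $*$ in Step 2, which is routine. Everything else follows from the commutativity of $\mathbb{C}_{I_{\phi}}$ and from $\phi\in\mathbb{C}_{I_{\phi}}$.
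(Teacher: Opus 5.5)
Your proposal is correct and follows essentially the same argument as the paper. Proposition~\ref{26491} puts $A$ in $M_{n}(\mathbb{C}_{I_{\phi}})$, so $A^{*}=\overline{\phi}A\phi=\overline{\phi}\phi A=A$; the complex spectral theorem, transported through $M_{n}(\mathbb{C}_{I_{\phi}})\cong M_{n}(\mathbb{C})$, then gives a unitary $U\in M_{n}(\mathbb{C}_{I_{\phi}})$ with $UAU^{*}=\Delta$ real diagonal, and since $U^{*}$ commutes with $\phi$ we get $UAU^{*\phi}=UAU^{*}=\Delta$, where you only spell out the compatibility of the isomorphism with $*$ that the paper leaves implicit.
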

\begin{proof}
Suppose that $A$ is a $\phi$-Hermitian matrix, by the definition of $\phi$-Hermitian matrix, we have ${\overline{A}^{\phi}}=A^{\ast}$. That is, $\overline{\phi}A\phi=A^{\ast}$. By Proposition \ref{26491}, $A\in M_{n}(\mathbb{C}_{I_{\phi}})$ and consequently, $A\phi=\phi A$. Therefore,
\begin{equation*}\label{h}
A^{\ast}=\overline{\phi} A \phi=\overline{\phi}\phi A=A.
\end{equation*}
This conclusion implies that for all $\phi\in \mathbb{S}^{\prime}$, a $\phi$-Hermitian matrix is a Hermitian matrix in $M_{n}(\mathbb{C}_{I_{\phi}})$.

Notice that $M_{n}(\mathbb{C}_{I_{\phi}})$ is isomorphic to $M_{n}(\mathbb{C})$ for any $\phi\in \mathbb{S}^{\prime}$. Then, it follows from $A$ being a Hermitian matrix in $M_{n}(\mathbb{C}_{I_{\phi}})$ that there exists a unitary matrix $U\in M_{n}(\mathbb{C}_{I_{\phi}})$
such that $UAU^{\ast}=\Delta$, where $\Delta$ is real diagonal. Furthermore, by $U\in M_{n}(\mathbb{C}_{I_{\phi}})$, we have
$$
\Delta=UAU^{\ast}=UAU^{\ast}\overline{\phi}\phi=UA\overline{\phi}U^{\ast}\phi=UAU^{\ast\phi}.
$$
This completes the proof of the theorem.
\end{proof}

\begin{remark}
Note that for $\phi\in\mathbb{S}$, the real diagonal matrix $\Delta$ can be further taken to be nonnegative. This result can be referred to \cite{HZ}, while we can not generally guarantee that all elements of $\Delta$ are nonnegative when $\phi\in \mathbb{S}^{\prime}$. This is essentially because

\begin{enumerate}
\item For a non zero $\sigma\in \mathbb{R}$, $\sigma$ and $-\sigma$ are $\phi$-similar when $\phi\in\mathbb{S}$ by Lemma \ref{1154}. However, it is not true for $\phi\in \mathbb{S}^{\prime}$.
\item The unitary $\phi$-congruence keeps the right $\phi$-eigenvalues invariant and Theorem \ref{26416} indicates the location of the right $\phi$-eigenvalues for the matrix with triangular form.
\end{enumerate}\end{remark}
For a more intuitive understanding of this fact, we provide a concrete example. To begin with, we introduce a lemma that may be helpful in interpreting the example.
\begin{lemma}\label{11201}
Let $\phi\in \mathbb{S}^{\prime}$. If $\overline{\phi}a\phi=-a$ for a quaternion $a$, then $a=0.$
\end{lemma}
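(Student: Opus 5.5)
The argument is a direct computation in coordinates, in the same style as the proof of Lemma \ref{26413}. Since $|\phi|=1$, we have $\overline{\phi}=\phi^{-1}$, so multiplying $\overline{\phi}a\phi=-a$ on the left by $\phi$ turns it into the linear equation
$$
a\phi=-\phi a,\qquad\text{equivalently}\qquad a\phi+\phi a=0 .
$$
We therefore only need to show that the real-linear map $a\mapsto a\phi+\phi a$ has trivial kernel when $\phi_0\neq 0$.

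Write $\phi=\phi_0+\boldsymbol{p}$ and $a=a_0+\boldsymbol{v}$, where $\boldsymbol{p}=\phi_1\boldsymbol{i}+\phi_2\boldsymbol{j}+\phi_3\boldsymbol{k}$ and $\boldsymbol{v}$ are pure imaginary. The product of pure quaternions is $\boldsymbol{v}\boldsymbol{p}=-\boldsymbol{v}\cdot\boldsymbol{p}+\boldsymbol{v}\times\boldsymbol{p}$, and the cross-product terms cancel in the symmetric sum. This gives
$$
a\phi+\phi a=2a_0\phi_0-2\,\boldsymbol{v}\cdot\boldsymbol{p}+2\big(a_0\boldsymbol{p}+\phi_0\boldsymbol{v}\big).
$$
Setting this equal to zero yields a real part and an imaginary part:
$$
a_0\phi_0=\boldsymbol{v}\cdot\boldsymbol{p},\qquad \phi_0\boldsymbol{v}=-a_0\boldsymbol{p}.
$$

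Since $\phi_0\neq 0$, the second equation gives $\boldsymbol{v}=-(a_0/\phi_0)\boldsymbol{p}$. Substituting into the first gives
$$
a_0\phi_0=-\frac{a_0}{\phi_0}|\boldsymbol{p}|^2,
$$
that is, $a_0(\phi_0^2+|\boldsymbol{p}|^2)=0$. Because $\phi_0^2+|\boldsymbol{p}|^2=|\phi|^2=1$, this forces $a_0=0$. Then $\boldsymbol{v}=0$, and hence $a=0$.

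No step is a real obstacle; the only point needing care is the cancellation of the cross-product terms in $a\phi+\phi a$. Note that the hypothesis $\phi\notin\mathbb{R}$ is not actually used. Only $\phi_0\neq0$ and $|\phi|=1$ matter; for $\phi=\pm1$ the statement is trivially true anyway, since the equation becomes $a=-a$.
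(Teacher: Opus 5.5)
Your proof is correct and follows the paper's argument. Both reduce $\overline{\phi}a\phi=-a$ to $a\phi+\phi a=0$, solve the imaginary-part equations for the imaginary components of $a$ in terms of $a_0$ using $\phi_0\neq 0$, and substitute into the real-part equation to force $a_0=0$ via $|\phi|=1$; the only difference is that you use dot/cross-product notation where the paper writes out the four scalar equations.
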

\begin{proof}
Assume $a=a_{0}+a_{1}\boldsymbol{i}+a_{2}\boldsymbol{j}+a_{3}\boldsymbol{k}$ and $\phi=\phi_{0}+\phi_{1}\boldsymbol{i}+\phi_{2}\boldsymbol{j}+\phi_{3}\boldsymbol{k}$, $\overline{\phi}a\phi=-a$ is equivalent to $a\phi+\phi a=0$. Then we obtain
\begin{displaymath}
\left \{ \begin{array}{ll}
a_{0}\phi_{0}-a_{1}\phi_{1}-a_{2}\phi_{2}-a_{3}\phi_{3}=0\\
a_{0}\phi_{1}+a_{1}\phi_{0}=0\\
a_{0}\phi_{2}+a_{2}\phi_{0}=0\\
a_{0}\phi_{3}+a_{3}\phi_{0}=0
\end{array} \right.
\end{displaymath}
Then $a_{1}=-\frac{a_{0}\phi_{1}}{\phi_{0}}, a_{2}=-\frac{a_{0}\phi_{2}}{\phi_{0}}, a_{3}=-\frac{a_{0}\phi_{3}}{\phi_{0}},$ and
$a_{0}\phi_{0}-a_{1}\phi_{1}-a_{2}\phi_{2}-a_{3}\phi_{3}=0$ is finally $\frac{a_{0}}{\phi_{0}}=0$, which means $a_{0}=a_{1}=a_{2}=a_{3}=0$. That is $a=0$.
\end{proof}

\begin{example}
Let $\phi\in \mathbb{S}^{\prime}$ and $A=
\begin{pmatrix}
-1 & 0 \\
0 & -2 \\
\end{pmatrix}$. Then
${\overline{A}^{\phi}}=A=
\begin{pmatrix}
-1 & 0 \\
0 & -2
\end{pmatrix}$, hence
$A{\overline{A}^{\phi}}=
\begin{pmatrix}
1 & 0 \\
0 & 4
\end{pmatrix}$. It is easy to see that the right eigenvalues of $A{\overline{A}^{\phi}}$ are $\lambda_{1}=1$ and $\lambda_{2}=4$. Let
\begin{equation}\label{1163}
A{\overline{A}^{\phi}}\boldsymbol{v_{1}}=\boldsymbol{v_{1}}\lambda_{1}=\boldsymbol{v_{1}}
\end{equation}
for a nonzero vector $\boldsymbol{v_{1}}\in\mathbb{H}^{2}$. That is, $\boldsymbol{v_{1}}$ is a right eigenvector of $A{\overline{A}^{\phi}}$ with respect to the right eigenvalue $\lambda_{1}$. Assume $A{\overline{\boldsymbol{v_{1}}}^{\phi}}=\boldsymbol{v_{1}}$ which is equivalent to
\begin{equation}\label{1161}
\begin{pmatrix}
-v_{11} \\
-\frac{1}{2}v_{12}
\end{pmatrix}=\begin{pmatrix}
\overline{\phi}v_{11}\phi \\
\overline{\phi}v_{12}\phi
\end{pmatrix},
\end{equation}
where $\boldsymbol{v_{1}}=(v_{11}, v_{12})^{T}$. From (\ref{1161}) we know that $v_{12}=0$, then $v_{11}$ must be nonzero. But $-v_{11}=\overline{\phi}v_{11}\phi$ holds only for $v_{11}=0$ by Lemma \ref{11201}, which gives a contradiction. Likewise, we assume $A{\overline{\boldsymbol{v_{1}}}^{\phi}}=-\boldsymbol{v_{1}}$ which is equivalent to
\begin{equation}\label{1162}
\begin{pmatrix}
v_{11} \\
\frac{1}{2}v_{12}
\end{pmatrix}=\begin{pmatrix}
\overline{\phi}v_{11}\phi \\
\overline{\phi}v_{12}\phi
\end{pmatrix}.
\end{equation}
From (\ref{1162}) we can see $v_{12}=0$, then $v_{11}$ must be nonzero. Furthermore, the $v_{11}$ such that the equality $v_{11}=\overline{\phi}v_{11}\phi$ holds exists. We only need to let $v_{11}$ be nonzero real number and such $\boldsymbol{v}$ satisfies (\ref{1163}) as well. Therefore, $A{\overline{\boldsymbol{v_{1}}}^{\phi}}=-\boldsymbol{v_{1}}$ but $A{\overline{\boldsymbol{v_{1}}}^{\phi}}=\boldsymbol{v_{1}}$ fails.
\end{example}

Now we proceed to prove the case that $\phi\in\mathbb{S}$. In this case, we have $\phi=\phi_{1}\boldsymbol{i}+\phi_{2}\boldsymbol{j}+\phi_{3}\boldsymbol{k}$ and $\phi_{1}^{2}+\phi_{2}^{2}+\phi_{3}^{2}=1$. Then $\overline{\phi}=\phi^{-1}=-\phi$ and $\phi^{2}=-1$. Therefore the $\phi$-conjugation of $A$ equals to $\overline{A}^{\phi}=-\phi A\phi$, the $\phi$-transpose of $A$ equals to $A^{*{\phi}}=-\phi A^*\phi$ and the following equalities hold
$$
\overline{\overline{A}^{\phi}}^{\phi}=A=(A^{*{\phi}})^{*{\phi}},  \overline{A^{*{\phi}}}^{\phi}=(\overline{A}^{\phi})^{*{\phi}}=A^{*}.
$$

As we have stated, the analogous result of Theorem \ref{26417} in the case $\phi\in\mathbb{S}$ has been proved in \cite{HZ}. In this section, we provide an alternative proof of this conclusion and it appears as a corollary of a theorem. In order to give a complete statement and proof of the main theorem of this part, we first make some preliminary preparations.
\begin{definition}
Let $A\in M_{n}(\mathbb{H})$. For $\omega\in\mathbb{H}$, if there exists a nonzero vector $\boldsymbol{x}\in\mathbb{H}^{n}$ such that $A\boldsymbol{x}=\boldsymbol{x}\omega$, then we call $\omega$ a right eigenvalue of $A$, and call $\boldsymbol{x}$ a right eigenvector of $A$ with respect to  $\omega$. Denote by $\sigma_{r}(A)$ the set of all right eigenvalues of $A$.
\end{definition}
Denote by $\mathcal{H}_{q}$ a separable quaternion Hilbert space and $L_{r}(\mathcal{H}_{q})$  the set of all bounded quaterinionic right linear operators on $\mathcal{H}_{q}$.
Right eigenvalues and right eigenvectors could be naturally defined for bounded quaterinionic right linear operators. Given any $T\in L_{r}(\mathcal{H}_{q})$. For any $\omega\in\mathbb{H}$, define the map $T-I\omega$ by
\[
(T-I\omega)\boldsymbol{y}=T\boldsymbol{y}-\boldsymbol{y}\omega, \ \ \ \ \text{for any} \ \boldsymbol{y}\in\mathcal{H}_{q}.
\]
Then, $\omega$ is a right eigenvalue of $T$ if and only if $\ker(T-I\omega)$ is nonzero. Moreover, $\ker(T-I\omega)$ is the right eigenvector space of $T$ with respect to  $\omega$.
X. Feng, B. Hou and K. Ji \cite{FHJ} studied the right eigenvector space of  $T\in L_{r}(\mathcal{H}_{q})$. They showed that if $\omega\in\sigma_{r}(T)$, then $\ker(T-I\omega)$ is a right linear vector space over the division ring $\{\omega\}^{\prime}$, and  for any $0\neq q\in\mathbb{H}$,
$$
\ker(T-Iq^{-1}\omega q)=\{ \boldsymbol{x}q;\boldsymbol{x}\in \ker(T-I\omega)\}.
$$
Obviously, those results hold for finite dimensional quaternionic matrices.

\begin{lemma}[{\cite[Lemma 3.1]{FHJ}}]
Let $A\in M_{n}(\mathbb{H})$ and $\omega\in\sigma_{r}(A)$. Then, for any $0\neq q\in\mathbb{H}$,
$$
\ker(A-Iq^{-1}\omega q)=\{ \boldsymbol{x}q;\boldsymbol{x}\in \ker(A-I\omega) \}.
$$
\end{lemma}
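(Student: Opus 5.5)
The plan is to prove the two inclusions directly from the definitions. Write $K_\omega=\ker(A-I\omega)=\{\boldsymbol{x}\in\mathbb{H}^n;\ A\boldsymbol{x}=\boldsymbol{x}\omega\}$ and $S=\{\boldsymbol{x}q;\ \boldsymbol{x}\in K_\omega\}$. The key facts are that $A$ acts by left multiplication, so it commutes with right multiplication by scalars, $A(\boldsymbol{x}q)=(A\boldsymbol{x})q$, and that $q$ is invertible in $\mathbb{H}$ since $q\neq 0$.

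For the inclusion $S\subseteq\ker(A-Iq^{-1}\omega q)$, I would take $\boldsymbol{x}\in K_\omega$ and compute
\[
A(\boldsymbol{x}q)=(A\boldsymbol{x})q=\boldsymbol{x}\omega q=(\boldsymbol{x}q)(q^{-1}\omega q).
\]
This uses only associativity of quaternion multiplication, applied entrywise. Hence $\boldsymbol{x}q\in\ker(A-Iq^{-1}\omega q)$.

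For the reverse inclusion, I would take $\boldsymbol{y}$ with $A\boldsymbol{y}=\boldsymbol{y}q^{-1}\omega q$ and set $\boldsymbol{x}=\boldsymbol{y}q^{-1}$. Then
\[
A\boldsymbol{x}=(A\boldsymbol{y})q^{-1}=\boldsymbol{y}q^{-1}\omega qq^{-1}=\boldsymbol{x}\omega,
\]
so $\boldsymbol{x}\in K_\omega$, and $\boldsymbol{y}=\boldsymbol{x}q\in S$. I would also remark that $\boldsymbol{x}\neq 0$ exactly when $\boldsymbol{x}q\neq 0$, so the correspondence $\boldsymbol{x}\mapsto\boldsymbol{x}q$ is a bijection of the nonzero vectors. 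In particular, $q^{-1}\omega q\in\sigma_r(A)$ whenever $\omega\in\sigma_r(A)$, which makes the statement consistent with the hypothesis $\omega\in\sigma_r(A)$.

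There is no real obstacle here. The only point needing care is the order of multiplication: scalars must act on the right, and $A(\boldsymbol{x}q)=(A\boldsymbol{x})q$ holds because entrywise $\sum_j a_{ij}(x_jq)=\big(\sum_j a_{ij}x_j\big)q$. The argument is just this rewriting of the eigen-equation together with invertibility of $q$.
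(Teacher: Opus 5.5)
Your proof is correct: both inclusions follow exactly as you compute from $A(\boldsymbol{x}q)=(A\boldsymbol{x})q$ and the invertibility of $q$, and this direct rewriting of the eigen-equation is the standard argument. The paper gives no proof of its own and simply quotes \cite[Lemma 3.1]{FHJ}, so there is nothing further to compare; note also that your computation does not need $\omega\in\sigma_r(A)$, since otherwise both sides are $\{\boldsymbol{0}\}$.
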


\begin{theorem}[\cite{Z}]\label{114}
Let $u_1$ be a unit column vector of $n$ quaternion components. Then there exist $n-1$ unit column vectors $u_2,\ \cdots,\ u_n$ of $n$ column
components such that $\{u_1,\ u_2,\ \cdots,\ u_n\}$ is an orthogonal set, i.e., $u_s^*u_t=0,\ s\ne t$.
\end{theorem}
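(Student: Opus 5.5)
The statement is Theorem \ref{114}: a unit vector $u_1\in\mathbb{H}^n$ extends to an orthonormal set $\{u_1,\dots,u_n\}$. I would prove it by quaternionic Gram--Schmidt, induction on the number of vectors already chosen, with all scalars multiplied on the right. This matches the inner product $\langle \boldsymbol{u},\boldsymbol{v}\rangle=\boldsymbol{v}^*\boldsymbol{u}$, which is right linear in $\boldsymbol{u}$: $\langle \boldsymbol{u}a,\boldsymbol{v}\rangle=\boldsymbol{v}^*\boldsymbol{u}a$.

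First I would fix the projection formula. For an orthonormal family $u_1,\dots,u_k$ and any $x\in\mathbb{H}^n$, set
$$
y=x-\sum_{s=1}^{k}u_s(u_s^*x).
$$
For each $t\le k$ we get $u_t^*y=u_t^*x-\sum_s (u_t^*u_s)(u_s^*x)=u_t^*x-u_t^*x=0$. This uses only associativity of matrix multiplication over $\mathbb{H}$ and $u_t^*u_s=\delta_{ts}$. If $y\neq 0$, then $y^*y=\sum_i |y_i|^2$ is a positive real number, so $u_{k+1}=y\,(y^*y)^{-1/2}$ is a unit vector. It is still orthogonal to the previous $u_t$, because the scalar is real and sits on the right.

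The remaining task, and the only real obstacle, is to find an $x$ with $y\neq 0$ whenever $k<n$. I would run the procedure on $x=e_1,\dots,e_n$, the standard basis, in turn. Suppose that for some $k<n$ every $e_i$ gives $y=0$. Then each $e_i=\sum_s u_s(u_s^*e_i)$ lies in the right $\mathbb{H}$-span of $u_1,\dots,u_k$, so $\mathbb{H}^n$ is spanned on the right by $k<n$ vectors.

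To rule this out I would use right-module dimension theory over the division ring $\mathbb{H}$: a spanning set can always be reduced to a basis by the usual exchange argument, which works over any division ring. Alternatively, form the $n\times k$ matrix $V=(u_1,\dots,u_k)$. Then $I_n=VW$ with $W=V^*$ of size $k\times n$. Passing to complex representations gives $I_{2n}=V_{\mathbb{C}}W_{\mathbb{C}}$, whose rank is at most $2k<2n$, a contradiction. This uses the fact, implicit in the paper's use of $T_{\mathbb{C}}$, that the map $T\mapsto T_{\mathbb{C}}$ is multiplicative, and it extends to rectangular matrices in the same way.

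So some $e_i$ yields $y\neq 0$, and induction produces $u_2,\dots,u_n$. If preferred, the same argument shows that $U=(u_1,\dots,u_n)$ is unitary, since $U^*U=I$ forces $UU^*=I$ via the complex representation.
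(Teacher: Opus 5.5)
The paper gives no proof of this statement; it is quoted from \cite{Z} and used only to extend $\boldsymbol{v}_1$ to an orthonormal basis in the proof of Theorem~\ref{1142}. Your quaternionic Gram--Schmidt argument is a correct, self-contained proof.

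The individual steps check out:
\begin{itemize}
\item The projection $y=x-\sum_{s}u_s(u_s^*x)$ satisfies $u_t^*y=0$, by associativity and $u_t^*u_s=\delta_{ts}$.
\item Right multiplication by the real scalar $(y^*y)^{-1/2}$ preserves this. The other order of the inner product also vanishes, since $u_{k+1}^*u_t=\overline{u_t^*u_{k+1}}=0$.
\item The only substantive point is that the process cannot stall while $k<n$. Either of your two justifications handles it.
\end{itemize}

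For the complex-representation version, the multiplicativity you invoke does hold for rectangular matrices with the paper's convention $T=T_1+\boldsymbol{j}T_2$. Using $z\boldsymbol{j}=\boldsymbol{j}\overline{z}$ for $z\in\mathbb{C}$, one gets
$MN=(M_1N_1-\overline{M_2}N_2)+\boldsymbol{j}(\overline{M_1}N_2+M_2N_1)$,
which is exactly the block product $M_{\mathbb{C}}N_{\mathbb{C}}$. Also $(I_n)_{\mathbb{C}}=I_{2n}$. Hence $I_{2n}=V_{\mathbb{C}}(V^*)_{\mathbb{C}}$ with $V_{\mathbb{C}}$ of size $2n\times 2k$, which has rank at most $2k<2n$, a contradiction.

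Compared with simply citing \cite{Z}, your route costs a few lines. In exchange, it rests only on a rank bound for complex matrices and on the representation $T_{\mathbb{C}}$ that the paper already uses in the proof of Theorem~\ref{shui}. An equally standard alternative is to observe that the homogeneous system $u_1^*x=\cdots=u_k^*x=0$ (fewer equations than unknowns) has a nonzero solution. That route rests on the same dimension count over $\mathbb{H}$.
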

\begin{lemma}\label{113}
Let $\phi\in\mathbb{S}$, $\alpha\in \mathbb{H}$. If $\alpha\overline{\alpha}^{\phi}=\lambda$ for some nonnegative real number $\lambda$, then there exists a unit $p\in \mathbb{H}$ such that $p\alpha(\overline{p}^{\phi})^{-1}=\sqrt{\lambda}$.
\end{lemma}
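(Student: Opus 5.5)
The plan is to transport the $\phi$-congruence problem to an ordinary similarity problem for quaternions, in the spirit of Lemma \ref{1154}, and then invoke Lemma \ref{1151}. Since $\phi\in\mathbb{S}$, we have $\overline{\phi}=\phi^{-1}=-\phi$. For a unit $p$ we have $\overline{p}=p^{-1}$, so $(\overline{p}^{\phi})^{-1}=(\overline{\phi}p\phi)^{-1}=\overline{\phi}\,\overline{p}\,\phi$. Hence the desired identity $p\alpha(\overline{p}^{\phi})^{-1}=\sqrt{\lambda}$ reads $p\alpha\overline{\phi}\,\overline{p}\,\phi=\sqrt{\lambda}$. Multiplying on the right by $\overline{\phi}$, this is equivalent to
$$
p\,(\alpha\overline{\phi})\,p^{-1}=\sqrt{\lambda}\,\overline{\phi}.
$$
So the whole task is to show that $\beta:=\alpha\overline{\phi}$ is similar to $\sqrt{\lambda}\,\overline{\phi}$, and that the conjugating quaternion can be taken to be a unit.

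\textbf{Step 1: rewrite the hypothesis in terms of $\beta$.} Using $\overline{\phi}=-\phi$, we have $\overline{\alpha}^{\phi}=-\phi\alpha\phi$. Therefore
$$
\lambda=\alpha\overline{\alpha}^{\phi}=-(\alpha\phi)(\alpha\phi)=-\beta^{2},
$$
since $\alpha\phi=-\beta$. Thus $\beta^{2}=-\lambda\le 0$.

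\textbf{Step 2: determine the shape of $\beta$.} Write $\beta=\beta_0+\mathrm{Im}(\beta)$. Then
$$
\beta^{2}=\beta_0^{2}-|\mathrm{Im}(\beta)|^{2}+2\beta_0\,\mathrm{Im}(\beta).
$$
For $\beta^2$ to be real we need $\beta_0=0$ or $\mathrm{Im}(\beta)=0$.
\begin{enumerate}
\item If $\mathrm{Im}(\beta)=0$, then $\beta^2=\beta_0^2\ge 0$. Combined with $\beta^2=-\lambda\le 0$, this forces $\lambda=0$ and $\beta=0$.
\item Otherwise $\beta$ is pure imaginary with $|\beta|^{2}=\lambda$.
\end{enumerate}

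\textbf{Step 3: the degenerate case.} If $\lambda=0$, then $\beta^{2}=0$ gives $\beta=0$, hence $\alpha=0$. Then $p=1$ works trivially.

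\textbf{Step 4: the case $\lambda>0$.} Here $\beta$ and $\sqrt{\lambda}\,\overline{\phi}$ are nonzero quaternions with $\mathrm{Re}(\beta)=0=\mathrm{Re}(\sqrt{\lambda}\,\overline{\phi})$ and $|\beta|=\sqrt{\lambda}=|\sqrt{\lambda}\,\overline{\phi}|$. By Lemma \ref{1151} they are similar, so there is a nonzero $q$ with $q\beta q^{-1}=\sqrt{\lambda}\,\overline{\phi}$. Set $p=q/|q|$. The real scalar cancels, so $p\beta p^{-1}=\sqrt{\lambda}\,\overline{\phi}$ still holds, and $p$ is a unit. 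Reversing the equivalences in the first paragraph then gives $p\alpha(\overline{p}^{\phi})^{-1}=\sqrt{\lambda}$.

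The only step needing any care is Step 2, namely extracting "pure imaginary of modulus $\sqrt{\lambda}$" from the equation $\beta^{2}=-\lambda$. It is a short computation, so I expect no real obstacle. The rest is bookkeeping with the identities $\overline{\phi}=-\phi$ and $\overline{p}=p^{-1}$ for unit $p$.
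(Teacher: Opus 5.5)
Your proof is correct and is essentially the paper's argument. The paper sets $\psi=\alpha\phi/\sqrt{\lambda}$, notes $\psi^{2}=-1$ so that $\psi\in\mathbb{S}$ is similar to $\phi$, and takes a unit $p$ with $p\psi p^{-1}=\phi$; after scaling by $-\sqrt{\lambda}$ this is exactly your condition $p(\alpha\overline{\phi})p^{-1}=\sqrt{\lambda}\,\overline{\phi}$, and your Step 2 spells out why a quaternion whose square is a nonpositive real must be pure imaginary, which the paper only asserts.
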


\begin{proof}
Since $\phi\in S$, we have $\overline{\phi}=-\phi$ and
$\phi^2=-1$. Hence
\[
\alpha\alpha^\phi
 =-\alpha\phi\alpha\phi
 =-(\alpha\phi)^2
 =\lambda.
\]

If $\lambda=0$, then $\alpha=0$. Therefore,
\[
p\alpha(p^\phi)^{-1}=0=\sqrt{\lambda}
\]
for any unit quaternion $p$.

Now assume that $\lambda>0$ and set
\[
\psi=\frac{\alpha\phi}{\sqrt{\lambda}}.
\]
Then
\[
\psi^2
 =\frac{(\alpha\phi)^2}{\lambda}
 =-1,
\]
so $\psi\in S$. Thus $\phi$ and $\psi$ have the same real part and
the same modulus. Consequently, they are similar, and hence there
exists a nonzero quaternion $p$ such that
\[
p\psi p^{-1}=\phi,
\]
or equivalently,
\[
p\psi=\phi p.
\]
After replacing $p$ by $p/|p|$, we may assume that $p$ is a unit
quaternion.

Since $\alpha\phi=\sqrt{\lambda}\psi$ and
$p^\phi\phi=\phi p$, we obtain
\[
\begin{aligned}
p\alpha(p^\phi)^{-1}=\sqrt{\lambda}
&\iff p\alpha=\sqrt{\lambda}\,p^\phi\\
&\iff p\alpha\phi=\sqrt{\lambda}\,\phi p\\
&\iff p\psi=\phi p.
\end{aligned}
\]
The last equality holds by the choice of $p$, which proves the result.
\end{proof}

The following theorem is our main theorem of this part.
\begin{theorem}\label{1142}
Let $\phi\in\mathbb{S}$, $A\in M_n(\mathbb{H})$. Then the following two statements are equivalent.
\begin{enumerate}
\item [(a)] There exists a unitary matrix $U\in M_n(\mathbb{H})$ and an upper triangular matrix $\Delta\in M_n(\mathbb{H})$ such that $A=U\Delta U^{*\phi}$ with all diagonal entries of $\Delta$ are nonnegative.

\item [(b)] All the right eigenvalues of $A\overline{A}^{\phi}$ are nonnegative.
\end{enumerate}

\end{theorem}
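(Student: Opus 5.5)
The plan is to treat the two directions separately. The forward direction (a)$\Rightarrow$(b) is a computation with unitary $\phi$-congruence; the reverse direction (b)$\Rightarrow$(a) is an induction on $n$, modeled on the complex proof of the Takagi/Schur-type triangularization under congruence.

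\emph{(a)$\Rightarrow$(b).} Since $\phi\in\mathbb{S}$, we have $\overline{\overline{X}^{\phi}}^{\phi}=X$ and $\overline{XY}^{\phi}=\overline{X}^{\phi}\,\overline{Y}^{\phi}$. Also, for unitary $U$, $U^{*\phi}\,\overline{U}^{\phi}=\overline{U^*U}^{\phi}=I$.

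If $A=U\Delta U^{*\phi}$, then $\overline{A}^{\phi}=\overline{U}^{\phi}\,\overline{\Delta}^{\phi}\,U^{*}$. Hence
$$A\overline{A}^{\phi}=U\Delta\,\overline{\Delta}^{\phi}U^{*},$$
which is unitarily equivalent to $\Delta\overline{\Delta}^{\phi}$. Since $\overline{\Delta}^{\phi}=-\phi\Delta\phi$ is again upper triangular, $\Delta\overline{\Delta}^{\phi}$ is upper triangular with diagonal entries $d_s\,\overline{\phi}d_s\phi=d_s^2\ge0$ (the $d_s$ are real). By the standard fact that the right eigenvalues of a triangular matrix are its diagonal entries together with their similarity classes (the case $\phi=1$ of Theorem \ref{26416}), and since a nonnegative real number is only similar to itself, every right eigenvalue of $A\overline{A}^{\phi}$ is nonnegative.

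\emph{(b)$\Rightarrow$(a).} We induct on $n$, with the $1\times1$ case given by Lemma \ref{113}. Let $\lambda\ge0$ be a right eigenvalue of $A\overline{A}^{\phi}$. The goal is to produce a unit vector $\boldsymbol{v}$ with $A\overline{\boldsymbol{v}}^{\phi}=\boldsymbol{v}\sqrt{\lambda}$, that is, a right $\phi$-eigenvector with nonnegative real $\phi$-eigenvalue. Take $\boldsymbol{x}\neq0$ with $A\overline{A}^{\phi}\boldsymbol{x}=\boldsymbol{x}\lambda$, and set $\boldsymbol{y}=A\overline{\boldsymbol{x}}^{\phi}$.
\begin{itemize}
\item If $\lambda>0$, put $\boldsymbol{v}=\boldsymbol{y}+\boldsymbol{x}\sqrt{\lambda}$. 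Using $\overline{\overline{\boldsymbol{x}}^{\phi}}^{\phi}=\boldsymbol{x}$ and the fact that real scalars commute with $\phi$, we get $A\overline{\boldsymbol{v}}^{\phi}=\boldsymbol{x}\lambda+\boldsymbol{y}\sqrt{\lambda}=\boldsymbol{v}\sqrt{\lambda}$. If $\boldsymbol{v}=0$, use $\boldsymbol{v}=\boldsymbol{y}-\boldsymbol{x}\sqrt{\lambda}$ instead. That vector satisfies $A\overline{\boldsymbol{v}}^{\phi}=-\boldsymbol{v}\sqrt{\lambda}$ and equals $-2\boldsymbol{x}\sqrt{\lambda}\neq0$. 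Replacing $\boldsymbol{v}$ by $\boldsymbol{v}q$ with a unit $q$ satisfying $\overline{\phi}q\phi=-q$ (e.g.\ $q\in\mathbb{S}$ anticommuting with $\phi$, which exists) flips the sign, since $\overline{\boldsymbol{v}q}^{\phi}=\overline{\boldsymbol{v}}^{\phi}\,\overline{q}^{\phi}$.
\item If $\lambda=0$, either $\boldsymbol{y}=0$, and then $\boldsymbol{v}=\boldsymbol{x}$ works with eigenvalue $0$; or $\boldsymbol{y}\ne0$ and $A\overline{\boldsymbol{y}}^{\phi}=A\overline{A}^{\phi}\boldsymbol{x}=0$, so $\boldsymbol{v}=\boldsymbol{y}$ works.
\end{itemize}
Normalize $\boldsymbol{v}$, extend it by Theorem \ref{114} to a unitary $V$ with first column $\boldsymbol{v}$, and compute $V^{*}A\,\overline{V}^{\phi}$. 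Its first column is $V^*\boldsymbol{v}\sqrt{\lambda}=(\sqrt{\lambda},0,\dots,0)^T$, so
$$V^*A\,\overline{V}^{\phi}=\begin{pmatrix}\sqrt{\lambda}&*\\0&A_1\end{pmatrix}.$$
The corresponding block form of $A\overline{A}^{\phi}$ under unitary equivalence is upper block triangular with lower block $A_1\overline{A_1}^{\phi}$. Its right eigenvalues are among those of $A\overline{A}^{\phi}$, hence nonnegative. Applying the induction hypothesis to $A_1$ and assembling the unitaries gives (a), with $U$ the composite unitary and $U^{*\phi}=(\overline{U}^{\phi})^{-1}$.

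The main obstacle is the eigenvector construction above. Quaternionic noncommutativity forces care with the order of scalars in $\overline{\boldsymbol{x}\sqrt{\lambda}}^{\phi}$ and with the sign fix when $\boldsymbol{y}+\boldsymbol{x}\sqrt{\lambda}$ vanishes. The other delicate point is checking that the right eigenvalues of a block triangular quaternion matrix include those of its diagonal blocks. For that I would pass to the complex representation $T_{\mathbb{C}}$, where right eigenvalue classes correspond to complex eigenvalues and block triangularity is preserved after a permutation.
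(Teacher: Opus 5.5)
Your proof is correct and follows essentially the same route as the paper. The vector $A\overline{\boldsymbol{x}}^{\phi}+\boldsymbol{x}\sqrt{\lambda}$ gives a right $\phi$-eigenvector with eigenvalue $\sqrt{\lambda}$, which is extended to a unitary, and the problem is deflated using the block triangular form of $V^{*}A\overline{A}^{\phi}V$. The differences are minor: you split on whether $\boldsymbol{y}+\boldsymbol{x}\sqrt{\lambda}$ vanishes and fix the sign with a unit $q$ anticommuting with $\phi$ (a special case of Lemma \ref{113}), rather than splitting on right linear dependence, and you give the complex-representation justification for the block-triangular eigenvalue inclusion, which the paper uses without comment.
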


 \begin{proof}
    $(a)\Rightarrow (b)$. If there exists a unitary matrix $U\in M_n(\mathbb{H})$ and a desired upper triangular matrix $\Delta\in M_n(\mathbb{H})$ such
    that $A=U\Delta U^{*\phi}$, then $A\overline{A}^{\phi}=U\Delta U^{*\phi}\overline{U}^{\phi}\overline{\Delta}^{\phi}U^*=U\Delta\overline{\Delta}^{\phi}U^*$ since $U$ is a unitary matrix.
    We can see that $A\overline{A}^{\phi}$
    is unitarily equivalent to $\Delta\overline{\Delta}^{\phi}$ and hence the right eigenvalues of $A\overline{A}^{\phi}$ are the same as the right eigenvalues of the upper triangular matrix $\Delta\overline{\Delta}^{\phi}$. Since the right eigenvalues of an upper triangular matrix are exactly its main diagonal entries and quaternions that are similar to them, we draw the conclusion that all the right eigenvalues of $A\overline{A}^{\phi}$ are nonnegative real numbers.

$(b)\Rightarrow (a)$. Assume that $A\overline{A}^{\phi}$ has only nonnegative right eigenvalues. That is $A\overline{A}^{\phi}\boldsymbol{x}=\boldsymbol{x}\lambda$ with $\lambda\ge0$ and
    $\boldsymbol{x}\ne\textbf{0}$. There are two possibilities.

    (1) $A\overline{\boldsymbol{x}}^{\phi}$ and $\boldsymbol{x}$ are right linearly dependent,

    (2) $A\overline{\boldsymbol{x}}^{\phi}$ and $\boldsymbol{x}$ are right linearly independent.

    In case (1), there is some $\mu\in\mathbb{H}$ possibly $\mu=0$ such that $A\overline{\boldsymbol{x}}^{\phi}=\boldsymbol{x}\mu$. Then
    $$A\overline{A}^{\phi}\boldsymbol{x}=A\overline{\boldsymbol{x}}^{\phi}\overline{\mu}^{\phi}
    =\boldsymbol{x}\mu \overline{ \mu }^{\phi} =\boldsymbol{x}\lambda,$$
    hence $\mu \overline{ \mu }^{\phi}=\lambda$. 

    In case (2), the vector $\boldsymbol{y}=A\overline{\boldsymbol{x}}^{\phi}+\boldsymbol{x}\mu\ne \textbf{0}$ for all $\mu\in\mathbb{H}$ and we can put $\mu=\sqrt{\lambda}$. Then
    $$
     A\overline{\boldsymbol{{y}}}^{\phi}=A(\overline{A}^{\phi}\boldsymbol{x}+\overline{\boldsymbol{x}}^{\phi}\overline{\mu}^{\phi})
    =\boldsymbol{x}\lambda+A\overline{\boldsymbol{x}}^{\phi}\overline{\mu}^{\phi} \\
    =\boldsymbol{x}\mu\overline{\mu}^{\phi}+A\overline{\boldsymbol{x}}^{\phi}\overline{\mu}^{\phi}=(\boldsymbol{x}\mu
    +A\overline{\boldsymbol{x}}^{\phi})\overline{\mu}^{\phi}=\boldsymbol{y}\overline{\mu}^{\phi}.
    $$

    In either case (1) or (2), we have shown there exist some nonzero vector $\boldsymbol{v}\in\mathbb{H}^n$ and $\alpha\in\mathbb{H}$ with
    $\alpha \overline{ \alpha }^{\phi}=\lambda$ such that $A \overline{ \boldsymbol{v} }^{\phi}=\boldsymbol{v}\alpha$. We may assume that $\boldsymbol{v}$ is a unit vector without
    loss of generality. Also, we can choose a unit-modulus quaternion p such that
    $p\alpha\ (\overline{ p }^{\phi})^{-1}=\sqrt{\lambda}$ by Lemma \ref{113}. Then
    $$
    A(\overline{ \boldsymbol{v} }^{\phi}(\overline{ p }^{\phi})^{-1})=(\boldsymbol{v}p^{-1})\sqrt{\lambda}.
    $$
It is obvious that the vector $\boldsymbol{v}p^{-1}$ is still a unit vector.

 Therefore, if $\lambda$ is a nonnegative right eigenvalue of $A \overline{ A }^{\phi}$, then we can always find a unit vector $\boldsymbol{v}_{1}$ and a
    nonnegative number $\sigma=\sqrt{\lambda}$ such that $A \overline{ \boldsymbol{v}_{1} }^{\phi}=\boldsymbol{v}_{1}\sigma=\sigma\boldsymbol{v}_{1}$.

 Now we extend
    this vector $\boldsymbol{v}_{1}$ to an orthonormal basis $\{\boldsymbol{v}_{1},\ \boldsymbol{v}_2,\ \cdots,\ \boldsymbol{v}_n\}$ of $\mathbb{H}^n$ by Theorem \ref{114}
     and let $V_{1}$ be the unitary matrix that has these vectors as columns. The first column of the matrix
     $(\overline{V_1})^TA \overline{ V_{1} }^{\phi}$ has entries
     $\boldsymbol{v}_l^*A \overline{ \boldsymbol{v} }^{\phi} =\sigma\boldsymbol{v}_l^*\boldsymbol{v}=\sigma\delta_{l1}$ because of orthonormality and the
     relation $A \overline{ \boldsymbol{v}_{1} }^{\phi}=\sigma\boldsymbol{v}_{1}$.

Thus, all but the first entries in the first column of
     $(\overline{V_1})^TA {\overline{ V_1 }^{\phi}}$ must be zero (the first entry might also be zero). If we write this matrix in
     partitioned form as
     $$(\overline{V_1})^TA{\overline{ V_1 }^{\phi}}=
     \begin{pmatrix}
     \sigma & W^T  \\
     0 & A_2
     \end{pmatrix}$$
     where $\sigma=\sqrt{\lambda},\ W\in\mathbb{H}^{(n-1)},\ A_2\in M_{n-1}{(\mathbb{H})}$. We can see that

     $$((\overline{V_1})^TA {\overline{ V_1 }^{\phi}})\times {\overline{ (\overline{{V_1}})^{T}A{\overline{ V_1 }^{\phi}} }^{\phi}}
     =V_1^*A \overline{ A }^{\phi} V_1=
     \begin{pmatrix}
     \sigma^2 & \sigma {(\overline{ W }^{\phi}})^T+W^T \overline{ A_2 }^{\phi} \\
     0 & A_2 \overline{ A_2 }^{\phi}
     \end{pmatrix}.$$
     The right eigenvalues of $A{\overline{A}^{\phi}}$ (all nonnegative by assumptions) are therefore $\sigma^2$ together with the right eigenvalues of
     $A_2\overline{ A_2 }^{\phi}$.

 We conclude that $A_2\in M_{n-1}(\mathbb{H})$ obtained by this process of reduction also has the property that all the right eigenvalues of
     $A_2 \overline{ A_2 }^{\phi}$ are nonnegative. The process of reduction can be repeated with $A_2$ and its successors at most $n-1$ times to obtain
     $$ (\overline{\widehat{V_{n-1}}})^T\cdots(\overline{\widehat{V_{2}}})^T(\overline{\widehat{V_{1}}})^TA
     {\overline{\widehat{ V_1} }^{\phi}}\cdots {\overline{ \widehat{V_2} }^{\phi}} {\overline{ \widehat{V_{n-1}} }^{\phi}}
     =\begin{pmatrix}
     \sigma_1  & * & \cdots & * \\
 0 & \sigma_2 & \cdots & * \\
 \vdots & \vdots & \ddots & \vdots \\
 0 & 0 & \cdots & \sigma_n
     \end{pmatrix}
     =\Delta,$$
where 
$$
\widehat{V_{k}}=I_{k-1}\oplus V_{k},\ \ k=1,\cdots,n-1.
$$     
Then $\Delta$ is upper triangular with nonnegative main diagonal entries $\sigma_i,\ i=1,\ \cdots,\ n$. If we set
     $U=\widehat{V_1}\widehat{V_2}\cdots \widehat{V_{n-1}}$, then $A=U\Delta U^{*\phi}$.

\end{proof}
\begin{example}
Let $\phi=\boldsymbol{j}$ and $A=
\begin{pmatrix}
1 & 0 \\
\boldsymbol{j} & 1 \\
\end{pmatrix}$,
$\Delta=
\begin{pmatrix}
1 & \boldsymbol{j} \\
0 & 1
\end{pmatrix}$. Then there exists
$U=
\begin{pmatrix}
0 & 1 \\
1 & 0
\end{pmatrix}$  such that $A=U\Delta U^{*\boldsymbol{j}}=U\Delta U^{*\phi}$.
\end{example}

\begin{corollary}\label{UIF1}(Autonne-Takagi factorization)
Let $\phi\in\mathbb{S}$ and $A\in M_n(\mathbb{H})$ be $\phi$-Hermitian. Then there exists a unitary matrix $U\in M_n(\mathbb{H})$ and a real nonnegative diagonal matrix
$\Sigma$  such that $A=U\Sigma U^{*\phi}$.
\end{corollary}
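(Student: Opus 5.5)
The plan is to deduce the corollary from Theorem \ref{1142}, and then to show that in the $\phi$-Hermitian case the triangular factor is automatically diagonal. The argument has three steps.

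\textbf{Step 1: nonnegative right eigenvalues.} Let $A$ be $\phi$-Hermitian with $\phi\in\mathbb{S}$. Then $\overline{A}^{\phi}=A^{*}$, which is equivalent to $A=A^{*\phi}$ since $\phi$-conjugation is an involution when $\phi\in\mathbb{S}$. Hence
$$A\overline{A}^{\phi}=AA^{*}.$$
This matrix is Hermitian and positive semidefinite. By the corollary of \cite{Z} quoted after Lemma \ref{2026.1.13.1}, it is unitarily equivalent to a real diagonal matrix $\mathrm{diag}(h_1,\ldots,h_n)$. Each $h_t=\boldsymbol{x}_t^{*}AA^{*}\boldsymbol{x}_t=\|A^{*}\boldsymbol{x}_t\|^2\ge 0$ for the corresponding unit vector $\boldsymbol{x}_t$. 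The right eigenvalues of a real diagonal matrix are its diagonal entries together with quaternions similar to them, and by Lemma \ref{1151} a quaternion similar to a real number is that number itself. So all right eigenvalues of $A\overline{A}^{\phi}$ are nonnegative reals, and hypothesis (b) of Theorem \ref{1142} holds.

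\textbf{Step 2: triangularize.} Theorem \ref{1142} now gives a unitary $U$ and an upper triangular $\Delta$ with nonnegative diagonal entries $\sigma_1,\ldots,\sigma_n$ such that $A=U\Delta U^{*\phi}$. We show that $\Delta$ is also $\phi$-Hermitian. Using the product rule $(XY)^{*\phi}=Y^{*\phi}X^{*\phi}$ and $(X^{*\phi})^{*\phi}=X$ for $\phi\in\mathbb{S}$,
$$A^{*\phi}=(U^{*\phi})^{*\phi}\Delta^{*\phi}U^{*\phi}=U\Delta^{*\phi}U^{*\phi}.$$
Comparing with $A=U\Delta U^{*\phi}$ and cancelling the invertible factors $U$ and $U^{*\phi}$ yields $\Delta=\Delta^{*\phi}$.

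\textbf{Step 3: the triangular factor is diagonal.} This is the main point. The matrix $\Delta^{*\phi}=\overline{\phi}\Delta^{*}\phi$ is lower triangular, because $\Delta^{*}$ is lower triangular and conjugating by the scalar $\phi$ entrywise preserves zero entries. An upper triangular matrix that equals a lower triangular matrix is diagonal, so $\Delta$ is diagonal.

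Its diagonal entries are the $\sigma_i\ge 0$, which are real. They are consistent with $\Delta=\Delta^{*\phi}$, since $\overline{\phi}\sigma_i\phi=\sigma_i$. Setting $\Sigma=\Delta$ gives $A=U\Sigma U^{*\phi}$ with $\Sigma$ real, nonnegative and diagonal, as required.
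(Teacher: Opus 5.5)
Your proof is correct and follows the paper's argument. Both proofs verify hypothesis (b) of Theorem \ref{1142} using $A\overline{A}^{\phi}=AA^{*}$, and both then use the $\phi$-Hermitian symmetry of the triangular factor to force it to be diagonal. You write that symmetry as $\Delta=\Delta^{*\phi}$ and the paper as $\overline{\Sigma}^{\phi}=\Sigma^{*}$, which are equivalent for $\phi\in\mathbb{S}$. The only cosmetic difference is in how nonnegativity of the right eigenvalues is obtained: the paper reads it off directly from $\boldsymbol{x}^{*}\boldsymbol{x}\lambda=(A^{*}\boldsymbol{x})^{*}(A^{*}\boldsymbol{x})$, whereas you unitarily diagonalize the Hermitian matrix $AA^{*}$.
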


\begin{proof}
 If $\overline{ A }^{\phi} =A^{\ast}$, then $A \overline{ A }^{\phi}=AA^*$. If $\boldsymbol{x}\ne\textbf{0}$ is any right eigenvector of $AA^*$. That is, $AA^*\boldsymbol{x}=\boldsymbol{x}\lambda$. Then
    $$\boldsymbol{x}^*(\boldsymbol{x}\lambda)=\boldsymbol{x}^*AA^*\boldsymbol{x}=({A^*\boldsymbol{x}})^*(A^*\boldsymbol{x}).$$
    Since $\boldsymbol{y}^*\boldsymbol{y}\ge0$ for $\boldsymbol{y}\in\mathbb{H}^n$ with $\boldsymbol{y}^*\boldsymbol{y}=0$ if and only if $\boldsymbol{y}=\textbf{0}$, we see that
    $$\lambda=\frac{(A^*\boldsymbol{x})^*(A^*\boldsymbol{x})}{\boldsymbol{x}^*\boldsymbol{x}}\ge 0.$$
    Thus, all the right eigenvalues of $A \overline{ A }^{\phi}$ are nonnegative whenever $\overline{ A }^{\phi}=A^{\ast}$.
Theorem \ref{1142} guarantees there is a unitary matrix $U\in M_n(\mathbb{H})$  and an upper triangular $\Sigma\in M_n(\mathbb{H})$ with
$$\Sigma=\begin{pmatrix}
\sigma_1  & * & \cdots & * \\
 0 & \sigma_2 & \cdots & * \\
 \vdots & \vdots & \ddots & \vdots \\
 0 & 0 & \cdots & \sigma_n
\end{pmatrix},$$
where $\sigma_i$ are all nonnegative for $i=1,\ \cdots,\ n$ such that $A=U\Sigma U^{*\phi}$. This is equivalent to $\overline{ A }^{\phi}=\overline{ U }^{\phi} \overline{ \Sigma }^{\phi}U^{\ast},~~A^{\ast}= \overline{ U }^{\phi}\Sigma^{\ast}U^{\ast}$. Therefore, $ \overline{ \Sigma }^{\phi} =\Sigma^{\ast}$. But $\Sigma$ is upper triangular, thus $\Sigma$ is diagonal. Since
$$
A^{*}A=\overline{ U }^{\phi}\Sigma^{2}U^{*\phi},
$$
and $\overline{ U }^{\phi}$ is unitary. It follows that the absolute values of the diagonal entries in $\Sigma$ are the singular values of $A$.
\end{proof}
\section{Unitary $\phi$-congruence}

In this part, we show equivalent statements for two $\phi$-Hermitian quaternionic matrices $A$ and $B$ to be unitarily $\phi$-congruent for $\phi\in\mathbb{S}$. At the same time, we give the $\phi$-HSH decomposition of quaternionic matrices and characterize the unitarily $\phi$-congruence between two quaternionic matrices $A$ and $B$ from the perspective of $\phi$-HSH decomposition. We begin with some preliminary work so that the main results can be stated more clearly. Among these, some conclusions related to unitary $\phi$-congruence are parallel to those on unitary equivalence in Section $2$.

\begin{lemma}\cite{Kh2006}\label{812}
Let $A, B\in M_{n}(\mathbb{C})$. If A and B are unitarily congruent, then $A_{L}=\overline{A}A$ and $B_{L}=\overline{B}B$ are unitarily equivalent.
\end{lemma}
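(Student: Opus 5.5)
The plan is a direct computation from the definition of unitary congruence for complex matrices. Recall that $A$ and $B$ are unitarily congruent if there is a unitary $U\in M_n(\mathbb{C})$ with $B=UAU^{T}$. The goal is to produce a unitary matrix $V$ with $V^{*}A_{L}V=B_{L}$, and the natural candidate is $V=\overline{U}^{\,T}=U^{*}$ (or its inverse, depending on the side convention).

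First write $\overline{B}=\overline{U}\,\overline{A}\,\overline{U^{T}}=\overline{U}\,\overline{A}\,U^{*}$, using $\overline{U^{T}}=(\overline{U})^{T}=U^{*}$. Then multiply:
$$
B_{L}=\overline{B}B=\overline{U}\,\overline{A}\,U^{*}\,U A U^{T}=\overline{U}\,\overline{A}A\,U^{T}=\overline{U}\,A_{L}\,U^{T}.
$$
Here $U^{*}U=I$ because $U$ is unitary.

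Next observe that $W:=\overline{U}$ is itself unitary, since $W^{*}=U^{T}$ and $WW^{*}=\overline{U}U^{T}=\overline{UU^{*}}=I$. Hence $U^{T}=W^{*}$, and therefore
$$
B_{L}=W A_{L} W^{*},
$$
which is exactly the statement that $A_{L}$ and $B_{L}$ are unitarily equivalent.

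There is no real obstacle; the only point requiring care is the bookkeeping of conjugates and transposes. In particular, one must verify that the middle factor collapses via $U^{*}U=I$ (not $UU^{*}$), and that $\overline{U}$ is unitary with inverse $U^{T}$. Both follow from the commutation of entrywise conjugation with products and transposes in $M_n(\mathbb{C})$.
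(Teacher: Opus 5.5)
Your proof is correct, and it is essentially the argument the paper has in mind: the paper just cites Ikramov, but its remark after the lemma identifies the key step as $\overline{B}=U^{*}\overline{A}\,\overline{U}$ when $B=U^{T}AU$, i.e.\ that entrywise conjugation is multiplicative on complex matrices. Your convention $B=UAU^{T}$ versus the paper's $B=U^{T}AU$ makes no difference, since $U^{T}$ is unitary whenever $U$ is.
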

However, the truth in Lemma \ref{812} does not hold for matrices in $M_{n}(\mathbb{H})$ for the reason that $B=U^{T}AU$ does not mean $\overline{B}=U^{*}\overline{A}~ \overline{U}$ because $\overline{MN}\neq \overline{M} ~\overline{N}$ for two quaternionic matrices $M$ and $N$ in general. Fortunately, the definition  ``unitary $\phi$-congruence''  can make a conclusion analogous to Lemma \ref{812} hold true.

\begin{proposition}
Let $A, B\in M_{n}(\mathbb{H})$. If $A$ and $B$ are unitarily $\phi$-congruent, then $A_{L}=\overline{A}^{\phi}A(A_{R}=A\overline{A}^{\phi})$ and $B_{L}=\overline{B}^{\phi}B(B_{R}=B\overline{B}^{\phi})$ are unitarily equivalent.
\end{proposition}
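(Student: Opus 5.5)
The plan is a direct computation from the definition of unitary $\phi$-congruence, using the multiplicativity rules of the basic lemma on $\phi$-conjugation at the start of Section 3. Throughout, $\phi\in\mathbb{S}$, which is the standing assumption of this section. Suppose $B=UAU^{*\phi}$ with $U$ unitary, where $U^{*\phi}=\overline{\phi}U^{*}\phi=(\overline{U}^{\phi})^{-1}$. I would treat $B_R$ and $B_L$ separately; in each case the goal is to exhibit an explicit unitary matrix conjugating $A_R$ to $B_R$, respectively $A_L$ to $B_L$.

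For $B_R$ I apply item 5 of that lemma, $\overline{MN}^{\phi}=\overline{M}^{\phi}\,\overline{N}^{\phi}$, to get
$$
B_R=B\overline{B}^{\phi}=UAU^{*\phi}\,\overline{U}^{\phi}\,\overline{A}^{\phi}\,\overline{U^{*\phi}}^{\phi}.
$$
Two simplifications then close the argument.

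First, $U^{*\phi}\overline{U}^{\phi}=\overline{\phi}U^{*}\phi\,\overline{\phi}U\phi=\overline{\phi}U^{*}U\phi=I$, because $\phi\overline{\phi}=1$.

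Second, I need $\overline{U^{*\phi}}^{\phi}=U^{*}$. This is the identity $\overline{A^{*\phi}}^{\phi}=A^{*}$ recorded for $\phi\in\mathbb{S}$ earlier in Section 3. Explicitly, it reads $\overline{\phi}^{2}U^{*}\phi^{2}=(-1)U^{*}(-1)=U^{*}$, using $\phi^{2}=\overline{\phi}^{2}=-1$.

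Together these give $B_R=UA\overline{A}^{\phi}U^{*}=UA_RU^{*}$, so $A_R\sim_u B_R$.

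For $B_L$ the same rules give
$$
B_L=\overline{B}^{\phi}B=\overline{U}^{\phi}\,\overline{A}^{\phi}\,\overline{U^{*\phi}}^{\phi}\,UAU^{*\phi}=\overline{U}^{\phi}\,\overline{A}^{\phi}U^{*}UA\,U^{*\phi}=\overline{U}^{\phi}A_L\,U^{*\phi}.
$$
Set $V=\overline{U}^{\phi}=\overline{\phi}U\phi$. Then $V$ is unitary, since $V^{*}V=\overline{\phi}U^{*}\phi\overline{\phi}U\phi=I$. By item 1 of the lemma, $V^{*}=(\overline{U}^{\phi})^{*}=U^{*\phi}$. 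Hence $B_L=VA_LV^{*}$, which gives $A_L\sim_u B_L$.

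The only delicate step is the double-conjugation identity $\overline{U^{*\phi}}^{\phi}=U^{*}$. It genuinely needs $\operatorname{Re}\phi=0$: for $\phi\in\mathbb{S}^{\prime}$ one gets $\overline{\phi}^{2}U^{*}\phi^{2}$, which is not $U^{*}$ in general, and the proof breaks down there. So I would state explicitly that the proposition is being proved for $\phi\in\mathbb{S}$. Everything else is routine bookkeeping with $\phi\overline{\phi}=1$.
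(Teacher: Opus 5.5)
Your proof is correct and is the same direct computation the paper relies on. The proposition is stated there without proof, but the identical manipulation $U\Delta U^{*\phi}\,\overline{U}^{\phi}\,\overline{\Delta}^{\phi}\,U^{*}=U\Delta\overline{\Delta}^{\phi}U^{*}$, using $U^{*\phi}\overline{U}^{\phi}=I$ and $\overline{U^{*\phi}}^{\phi}=U^{*}$ for $\phi\in\mathbb{S}$, appears in the proof of Theorem~\ref{1142}. The left-hand version, with $\overline{U}^{\phi}$ as the conjugating unitary as in your choice of $V$, is the one used in Theorem~\ref{8181}.

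Your restriction to $\phi\in\mathbb{S}$ is necessary, not just a limitation of the method: the statement itself fails when $\mathrm{Re}\,\phi\neq 0$. Take $\phi=(1+\boldsymbol{i})/\sqrt{2}$, $n=1$, $A=\frac{1}{2}(1+\boldsymbol{j})(1+\boldsymbol{i})$ and $U=(1-\boldsymbol{k})/\sqrt{2}$. Then $B=UAU^{*\phi}=\boldsymbol{i}$, $A_R=-\boldsymbol{k}$ and $B_R=-1$, which have different real parts and so are not similar.
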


\begin{lemma}\cite{Kh2006}\label{wanle}
If $A\in M_{n}(\mathbb{C})$ is a conjugate-normal matrix, then $A_{L}=\overline{A}A$ is normal.
\end{lemma}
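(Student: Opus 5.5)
The plan is a direct computation with $N=A_{L}=\overline{A}A$. I will show $NN^{*}=N^{*}N$ by rewriting both products through the conjugate-normality relation, and then check that the two results agree. Throughout I use that entrywise conjugation on $M_{n}(\mathbb{C})$ is multiplicative, $\overline{MN}=\overline{M}\,\overline{N}$, and that $(\overline{M})^{*}=M^{T}$. (This multiplicativity is exactly what fails for quaternions, as noted after Lemma \ref{812}.)

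First I record the hypothesis in two forms. Conjugate-normality of $A$ means
$$AA^{*}=\overline{A^{*}A}=A^{T}\overline{A}.$$
Applying entrywise conjugation to both sides gives the companion identity
$$\overline{A}A^{T}=A^{*}A.$$
Next, since $(\overline{A}A)^{*}=A^{*}(\overline{A})^{*}=A^{*}A^{T}$, we have $N^{*}=A^{*}A^{T}$.

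For $NN^{*}$, I group the middle factors and use the first form of the hypothesis:
$$NN^{*}=\overline{A}\,(AA^{*})\,A^{T}=\overline{A}A^{T}\,\overline{A}A^{T}=(\overline{A}A^{T})^{2},$$
and by the companion identity this equals $(A^{*}A)^{2}$.

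For $N^{*}N$, I group the middle factors $A^{T}\overline{A}$ instead:
$$N^{*}N=A^{*}\,(A^{T}\overline{A})\,A=A^{*}(AA^{*})A=(A^{*}A)^{2}.$$

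Thus $NN^{*}=N^{*}N=(A^{*}A)^{2}$, so $A_{L}$ is normal. The only step needing any care is spotting the conjugated form $\overline{A}A^{T}=A^{*}A$ of the hypothesis; once it is available, both products collapse to the same square. There is no real obstacle beyond bookkeeping of transposes and conjugates.
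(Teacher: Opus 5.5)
Your proof is correct: with $N=\overline{A}A$, both $NN^{*}$ and $N^{*}N$ reduce to $(A^{*}A)^{2}$, using the hypothesis $AA^{*}=A^{T}\overline{A}$ together with its conjugate $\overline{A}A^{T}=A^{*}A$. The paper does not prove this lemma itself (it cites \cite{Kh2006}), but its proof of the quaternionic analogue, Proposition \ref{2026.1.5.1}, is the same computation with $\phi$-conjugation in place of complex conjugation.
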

The $\phi$-conjugate normality of a quaternionic matrix has analogous property to the conjugate-normality of a complex matrix.
\begin{proposition}\label{2026.1.5.1}
If $A\in M_{n}(\mathbb{H})$ is $\phi$-conjugate normal, then $A_{L}=\overline{A}^{\phi}A$ and $A_{R}=A\overline{A}^{\phi}$ are normal.
\end{proposition}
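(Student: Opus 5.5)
The plan is a direct computation. It rests on the fact that, for $\phi\in\mathbb{S}$, the map $X\mapsto \overline{X}^{\phi}=\overline{\phi}X\phi$ is an involutive $*$-automorphism of $M_n(\mathbb{H})$. Multiplicativity $\overline{XY}^{\phi}=\overline{X}^{\phi}\,\overline{Y}^{\phi}$ and the relation $(\overline{X}^{\phi})^*=\overline{X^*}^{\phi}$ are in the lemma at the start of Section 3. Involutivity $\overline{\overline{X}^{\phi}}^{\phi}=X$ is noted in the paper for $\phi\in\mathbb{S}$, since $\phi^2=-1$.

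First I apply $\overline{\,\cdot\,}^{\phi}$ to the defining identity $\overline{A^*A}^{\phi}=AA^*$. By involutivity this gives the companion identity
\[
A^*A=\overline{AA^*}^{\phi}.
\]
Both identities will be used.

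Next I handle $A_R=A\overline{A}^{\phi}$, whose adjoint is $A_R^*=\overline{A^*}^{\phi}A^*$. Using multiplicativity and the defining identity,
\[
A_RA_R^*=A\,\overline{AA^*}^{\phi}A^*=A(A^*A)A^*=(AA^*)^2 .
\]
For the other product I substitute $A^*A=\overline{AA^*}^{\phi}$ in the middle:
\[
A_R^*A_R=\overline{A^*}^{\phi}(A^*A)\overline{A}^{\phi}=\overline{A^*}^{\phi}\,\overline{AA^*}^{\phi}\,\overline{A}^{\phi}=\overline{A^*AA^*A}^{\phi}=\bigl(\overline{A^*A}^{\phi}\bigr)^2=(AA^*)^2 .
\]
The two products agree, so $A_R$ is normal.

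For $A_L$ I avoid a second computation. By involutivity and multiplicativity, $\overline{A_R}^{\phi}=\overline{A}^{\phi}\,\overline{\overline{A}^{\phi}}^{\phi}=\overline{A}^{\phi}A=A_L$. Moreover, $\overline{X}^{\phi}=\overline{\phi}X\phi$ is a unitary equivalence, because the scalar matrix $\phi I$ is unitary. Hence it preserves normality: $\overline{X}^{\phi}(\overline{X}^{\phi})^*=\overline{XX^*}^{\phi}$ and $(\overline{X}^{\phi})^*\overline{X}^{\phi}=\overline{X^*X}^{\phi}$. Therefore $A_L$ is normal as well.

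The only delicate point is the bookkeeping of where to insert the companion identity $A^*A=\overline{AA^*}^{\phi}$, since the two middle factors in $A_R^*A_R$ are not adjacent in an obvious way. This is exactly why the hypothesis $\phi\in\mathbb{S}$, which gives involutivity, is needed.
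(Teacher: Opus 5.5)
Your proof is correct and is essentially the paper's argument: a direct computation from the defining identity together with its $\phi$-conjugate $\overline{AA^*}^{\phi}=A^*A$. The paper computes $A_LA_L^*=A_L^*A_L=(A^*A)^2$ explicitly and calls $A_R$ analogous, whereas you compute $A_R$ and obtain $A_L=\overline{A_R}^{\phi}$ via the unitary similarity by $\phi I$, which neatly avoids repeating the computation; note that your step $A\,\overline{AA^*}^{\phi}A^*=A(A^*A)A^*$ uses the companion identity, not the defining one as you state.
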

\begin{proof}
If $A$ is $\phi$-conjugate normal, then
$$
\overline{A^*A}^{\phi}=\overline{A^*}^{\phi}\overline{A}^{\phi}=AA^*,
$$

$$
\overline{AA^*}^{\phi}=\overline{A}^{\phi}\overline{A^*}^{\phi}=A^*A.
$$
Thus
$$
A_{L}A_{L}^{*}=\overline{A}^{\phi}AA^{*}\overline{A^*}^{\phi}=\overline{A}^{\phi}\overline{A^*}^{\phi}\overline{A}^{\phi}\overline{A^*}^{\phi}=(\overline{A}^{\phi}\overline{A^*}^{\phi})^{2}=(A^*A)^{2},
$$
$$
A_{L}^{*}A_{L}=A^{*}\overline{A^*}^{\phi}\overline{A}^{\phi}A=A^{*}AA^{*}A=(A^{*}A)^{2}.
$$
That is, $A_{L}A_{L}^{*}=A_{L}^{*}A_{L}$, so $A_L$ is normal. The proof of $A_R$ is analogous.
\end{proof}
\begin{remark}
The normality of $A_L$ and $A_{R}$ does not imply $A$ is $\phi$-conjugate normal. For instance, let
 $$
A_{1}= \begin{pmatrix}
0 & 1 \\
0 & 0 \\
\end{pmatrix}, A_{2}= \begin{pmatrix}
0 & 1 \\
2 & 0 \\
\end{pmatrix}.
  $$
Then $A_{1L}=0$ and $A_{2R}=2I_2$ are normal. But
$$
\{ A_{1}A_{1}^{*}\}_{11}=1\neq 0=\{\overline{A_{1}^*}^{\phi}\overline{A_{1}}^{\phi}\}_{11}$$
 and
$$
\{ A_{2}A_{2}^{*}\}_{11}=1\neq 4=\{\overline{A_{2}^*}^{\phi}\overline{A_{2}}^{\phi}\}_{11},$$
which implies $A_1$ and $A_2$ are not $\phi$-conjugate normal.
\end{remark}

\begin{lemma}[\cite{HZ}]\label{2026.1.13.2}
Let $A\in M_{n}(\mathbb{H})$. If $A=-A^{*}$, that is, $A$ is skew-Hermitian, then there exists a unitary matrix $U\in M_{n}(\mathbb{H})$ such that
$UAU^{*}$ is a diagonal matrix with all the diagonal elements pure imaginary.
\end{lemma}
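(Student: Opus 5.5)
The plan is to reduce this to the normal case via the complex representation and then adjust eigenvalues within similarity classes. Since $A=-A^*$, $A$ commutes with $A^*=-A$, so $A$ is normal. By the corollary to Lemma \ref{2026.1.13.1} there is a unitary $U\in M_n(\mathbb{H})$ with
$$
UAU^{*}=D=\mathrm{diag}(h_{1}+k_{1}\boldsymbol{i},\cdots,h_{n}+k_{n}\boldsymbol{i}),\qquad k_t\ge 0 .
$$
This diagonalization is the only nontrivial input. If one prefers a self-contained route, one can instead take the Schur triangularization of Lemma \ref{2026.1.13.1}, $UAU^*=T$ upper triangular. Then $T^*=UA^*U^*=-T$ is simultaneously upper and lower triangular, so $T$ is diagonal. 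This avoids quoting the normal-matrix corollary.

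Next I show the diagonal entries are pure imaginary. Since $D=UAU^*$ and $A^*=-A$, we get $D^*=UA^*U^*=-D$. Comparing diagonal entries gives $\overline{h_t+k_t\boldsymbol{i}}=-(h_t+k_t\boldsymbol{i})$, that is, $h_t-k_t\boldsymbol{i}=-h_t-k_t\boldsymbol{i}$. Hence $h_t=0$ for every $t$, so each diagonal entry $k_t\boldsymbol{i}$ is pure imaginary. This completes the claim.

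I expect no real obstacle, as long as the diagonalization of normal quaternion matrices (Zhang's result, quoted earlier as a corollary) is accepted. The only point needing care is in the triangular variant: one must check that $(UAU^*)^*=UA^*U^*$ for quaternion matrices. This holds because $(XY)^*=Y^*X^*$ remains valid over $\mathbb{H}$, since $\overline{ab}=\overline{b}\,\overline{a}$.
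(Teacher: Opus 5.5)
Your proof is correct: the triangular form $T=UAU^*$ from Lemma~\ref{2026.1.13.1} satisfies $T^*=UA^*U^*=-T$, which forces $T$ to be diagonal, and then $\overline{t_{ss}}=-t_{ss}$ forces each diagonal entry to have zero real part (the route through normality and Zhang's diagonalization corollary works just as well). The paper does not prove this lemma itself but quotes it from \cite{HZ}; your argument is the standard derivation from Zhang's triangularization, and it directly gives the sharper form $UAU^*=\mathrm{diag}(k_1\boldsymbol{i},\dots,k_n\boldsymbol{i})$ with $k_t\ge 0$, which the paper states afterwards in Remark~\ref{2026.1.13.4}.
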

\begin{remark}\label{2026.1.13.4}
Together with Lemma \ref{2026.1.13.1} and Lemma \ref{2026.1.13.2}, it can be seen that a quaternion skew-Hermitian matrix $A$ can be unitarily equivalent to a diagonal matrix whose diagonal entries are $k_{1}\boldsymbol{i},\cdots,k_{n}\boldsymbol{i}$, where $k_{t}\geq 0$ for $1\leq t\leq n$.
\end{remark}
\begin{lemma}\label{2026.1.13.3}
If $A\in M_{n}(\mathbb{H})$, then $A$ is $\phi$-Hermitian if and only if $\phi A$ is skew-Hermitian.
\end{lemma}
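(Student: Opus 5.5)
Lemma \ref{2026.1.13.3} is stated in Section 4, where $\phi\in\mathbb{S}$ is the standing assumption. So throughout we use $\overline{\phi}=-\phi$ and $\phi^2=-1$; in particular $A^{*\phi}=-\phi A^*\phi$. The plan is a direct computation that rewrites the defining identity $A=A^{*\phi}$ as an identity for $\phi A$.

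First we compute the adjoint of $\phi A$. Since $\phi$ is a scalar quaternion acting on the left, $(\phi A)^*=A^*\overline{\phi}=-A^*\phi$. Hence $\phi A$ is skew-Hermitian if and only if $\phi A=A^*\phi$.

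Next we show this is the same as $\phi$-Hermitianity. Suppose $A=A^{*\phi}=-\phi A^*\phi$. Multiplying on the left by $\phi$ and using $\phi^2=-1$ gives $\phi A=-\phi^2A^*\phi=A^*\phi$. Conversely, suppose $\phi A=A^*\phi$. Multiplying on the left by $-\phi$ (which is $\phi^{-1}$) gives $A=-\phi A^*\phi=A^{*\phi}$. Every step is reversible because $\phi$ is invertible, so the two conditions are equivalent.

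There is no real obstacle here. The only points needing care are keeping the noncommutative order straight ($\phi$ stays on the left and $\overline{\phi}$ on the right in the adjoint) and using $\phi\in\mathbb{S}$ so that $\overline{\phi}=-\phi$. If one wanted the statement for general $\phi\in\mathbb{S}^3$, the analogous computation would give $(\phi A)^*=A^*\overline{\phi}$. Skew-Hermitianity would then read $\phi A=-A^*\overline{\phi}$, i.e.\ $A=-\overline{\phi}A^*\overline{\phi}$, which differs from $A^{*\phi}$. So the lemma is genuinely tied to $\phi\in\mathbb{S}$, and the proof should say so explicitly.
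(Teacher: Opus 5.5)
Your proof is correct: the paper states this lemma without proof, and your computation $(\phi A)^*=A^*\overline{\phi}=-A^*\phi$ reduces skew-Hermitianity of $\phi A$ to $\phi A=A^*\phi$, and hence, after left multiplication by $\phi^{-1}=-\phi$, to $A=-\phi A^*\phi=A^{*\phi}$; this is exactly the routine verification the paper leaves implicit. Your observation that the standing hypothesis $\phi\in\mathbb{S}$ is essential is also right: $A=I$ is $\phi$-Hermitian for every unit $\phi$ (since $\overline{\phi}\phi=1$), but $\phi I$ is skew-Hermitian only when $\mathrm{Re}\,\phi=0$.
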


With the above conclusions, we obtain some equivalent descriptions about the unitary $\phi$-congruence of two $\phi$-Hermitian matrices $A,B\in M_{n}(\mathbb{H})$.

\begin{theorem}\label{8181}
Let $\phi\in\mathbb{S}$. Suppose $A$ and $B$ are two  $\phi$-Hermitian matrices in $M_n(\mathbb{H})$. Then the following statements are equivalent.
\begin{enumerate}
\item [(1)] A and B are unitarily $\phi$-congruent.

\item [(2)] $A_{L}=\overline{A}^{\phi}A$ and $B_{L}=\overline{B}^{\phi}B$ are unitarily equivalent.

\item [(3)] $A_{R}=A\overline{A}^{\phi}$ and $B_{R}=B\overline{B}^{\phi}$ are unitarily equivalent.

\item [(4)] The families $\{ A_L, A_{L}^{*}\}$ and $\{ B_L, B_{L}^{*}\}$ are unitarily equivalent.

\item [(5)] The families $\{ A_L, A_{L}^{*}\}$ and $\{ B_L, B_{L}^{*}\}$ are  similar.

\item [(6)] The families $\{ A_R, A_{R}^{*}\}$ and $\{ B_R, B_{R}^{*}\}$ are  unitarily equivalent.

\item [(7)] The families $\{ A_R, A_{R}^{*}\}$ and $\{ B_R, B_{R}^{*}\}$ are  similar.

\end{enumerate}

\end{theorem}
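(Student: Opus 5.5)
Since $\phi\in\mathbb{S}$ and $A,B$ are $\phi$-Hermitian, we have $\overline{A}^{\phi}=A^{*}$ and $\overline{B}^{\phi}=B^{*}$. The first step is therefore to rewrite the seven statements: $A_L=A^{*}A$, $A_R=AA^{*}$, $B_L=B^{*}B$ and $B_R=BB^{*}$ are all Hermitian positive semidefinite. In particular $A_L^{*}=A_L$, and likewise for the other three. The family statements (4)--(7) then collapse to statements about single matrices: the family $\{A_L,A_L^{*}\}$ is just $\{A_L\}$. So (4)$\Leftrightarrow$(2) and (6)$\Leftrightarrow$(3) are immediate. Similarity of Hermitian (hence normal) matrices implies unitary equivalence by Corollary \ref{2026.1.5.2}, which gives (5)$\Leftrightarrow$(2) and (7)$\Leftrightarrow$(3). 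Equivalently, one can cite Theorem \ref{shui} directly.

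The next step is (1)$\Rightarrow$(2),(3), which is exactly the Proposition preceding Lemma \ref{wanle}. I will verify it quickly. If $B=UAU^{*\phi}$, then $\overline{B}^{\phi}=\overline{U}^{\phi}\,\overline{A}^{\phi}U^{*}$ by the multiplicativity lemma. Since $U^{*\phi}\overline{U}^{\phi}=\overline{U^{*}U}^{\phi}=I$, this gives $B\overline{B}^{\phi}=U(A\overline{A}^{\phi})U^{*}$. The computation for $B_L$ is analogous, using the unitary $\overline{U}^{\phi}$.

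The main work is (2)$\Rightarrow$(1) and (3)$\Rightarrow$(1). By Corollary \ref{UIF1}, write $A=U\Sigma_AU^{*\phi}$ and $B=V\Sigma_BV^{*\phi}$ with $U,V$ unitary and $\Sigma_A,\Sigma_B$ nonnegative real diagonal. A direct computation gives
$$AA^{*}=U\Sigma_AU^{*\phi}(U^{*\phi})^{*}\Sigma_AU^{*}=U\Sigma_A^{2}U^{*},$$
since $(U^{*\phi})^{*}=\overline{\phi}U\phi$ and hence $U^{*\phi}(U^{*\phi})^{*}=\overline{\phi}U^{*}U\phi=I$. Similarly $A^{*}A=\overline{U}^{\phi}\Sigma_A^{2}(\overline{U}^{\phi})^{*}$, as in the proof of Corollary \ref{UIF1}.

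Under (3), the real diagonal matrices $\Sigma_A^{2}$ and $\Sigma_B^{2}$ are therefore unitarily equivalent. The step needing care is deducing that their diagonal entries agree as multisets. For this I pass to complex representations: $(\Sigma^{2})_{\mathbb{C}}=\Sigma^{2}\oplus\Sigma^{2}$, and unitary equivalence in $M_{2n}(\mathbb{C})$ preserves eigenvalue multiplicities, so each value occurs equally often in $\Sigma_A^2$ and $\Sigma_B^2$. Nonnegativity then gives $\Sigma_B=P\Sigma_AP^{T}$ for a real permutation matrix $P$.

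Finally, because $P$ is real, it commutes with $\phi$, so $P^{*\phi}=\overline{\phi}P^{T}\phi=P^{T}$. Hence $\Sigma_A\sim_{\phi-u}\Sigma_B$. Transitivity and symmetry of $\sim_{\phi-u}$ follow from $(XY)^{*\phi}=Y^{*\phi}X^{*\phi}$ and $(U^{*\phi})^{-1}=(U^{-1})^{*\phi}$. Combining $A\sim_{\phi-u}\Sigma_A\sim_{\phi-u}\Sigma_B\sim_{\phi-u}B$ yields (1). The argument for (2) is identical, using $A^{*}A$ and the unitary $\overline{U}^{\phi}$ in place of $AA^{*}$ and $U$.
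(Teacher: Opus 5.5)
Your proof is correct and follows essentially the same strategy as the paper. Both arguments bring $A$ and $B$ to diagonal canonical forms under unitary $\phi$-congruence, show that unitary equivalence of $A_L$ (or $A_R$) forces the squared diagonal entries, and hence the canonical forms, to coincide, and dispose of the family statements via Theorem~\ref{shui}. The differences are minor and in your favour: you use the nonnegative Autonne--Takagi form $\Sigma_A$ of Corollary~\ref{UIF1} instead of the form $T_A$ of Remark~\ref{264142} (with $\phi T_A=\mathrm{diag}(k_1\boldsymbol{i},\dots,k_n\boldsymbol{i})$), and your complex-representation count of multiplicities together with the explicit real permutation $P$ (using $P^{*\phi}=P^{T}$) make rigorous the step the paper passes over with ``without loss of generality''. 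The paper's version of that step appeals only to equality of right spectra, which as sets do not record multiplicities.
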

\begin{proof}
Assume $T_{A},T_{B}$ are the canonical forms of $A,B$ under unitary $\phi$-congruence as stated in Remark \ref{264142}. Then by Lemma \ref{2026.1.13.3} and Remark \ref{2026.1.13.4}, $\phi T_{A}$ and $\phi T_{B}$ both have the form
$$
\phi T_{A}=\mathrm{diag}(\phi\lambda_1,\cdots,\phi\lambda_n)=\mathrm{diag}(k_1\boldsymbol{i},\cdots,k_n\boldsymbol{i}),
 $$
 $$
 \phi T_{B}=\mathrm{diag}(\phi\mu_1,\cdots,\phi\mu_n)=\mathrm{diag}(l_1\boldsymbol{i},\cdots,l_n\boldsymbol{i}),
$$
where $k_t$ and $l_t$ are all non negative for $1\leq t\leq n$.

$(1)\Rightarrow (2)$. If $A$ and $B$ are unitarily $\phi$-congruent, then the canonical form $T_A$ of $A$ under unitary $\phi$-congruence can be equal to the canonical form $T_B$ of $B$ under unitary $\phi$-congruence. Assume
$$
U^{*}A\overline{U}^{\phi}=T_A,~~~V^{*}B\overline{V}^{\phi}=T_B=T_A,
$$
then $\overline{T_A}^{\phi}T_A=\overline{U^*}^{\phi}\overline{A}^{\phi}A\overline{U}^{\phi}= \overline{T_B}^{\phi}T_B=\overline{V^*}^{\phi}\overline{B}^{\phi}B\overline{V}^{\phi}$. Therefore, $A_L$ and $B_L$ are unitarily equivalent.

$(2)\Rightarrow (1)$. Assume
$$
\overline{U}^{\phi}AU^{*}=T_A=\mathrm{diag}(\lambda_1,\cdots,\lambda_n),~~~\overline{V}^{\phi}BV^{*}=T_B=\mathrm{diag}(\mu_1,\cdots,\mu_n),
$$
then $A_L=\overline{A}^{\phi}A=U^{*}\overline{T_A}^{\phi}T_{A}U$
and $B_L=\overline{B}^{\phi}B=V^{*}\overline{T_B}^{\phi}T_{B}V$.

If $A_{L}$ is unitarily equivalent to $B_L$, then there exists a unitary matrix Q such that $A_L=Q^{*}B_{L}Q$.
That is,
$$
U^{*}\overline{T_A}^{\phi}T_{A}U=
Q^{*}V^{*}\overline{T_B}^{\phi}T_{B}VQ.
$$
The equation above shows that $\overline{T_A}^{\phi}T_{A}$ and $\overline{T_B}^{\phi}T_{B}$ are unitarily equivalent, so they have the same right eigenvalues. Recall both $\overline{T_A}^{\phi}T_A$ and $\overline{T_B}^{\phi}T_B$ are diagonal, their right spectra
are the unions of the similarity classes of their diagonal
entries. Hence without loss of generality, we can deduce that there exist non zero quaternions $p_{1},\cdots,p_{n}$ such that
$$
p_{1}(\overline{\lambda_1}^{\phi}\lambda_1)p_{1}^{-1}=\overline{\mu_1}^{\phi}\mu_1,\cdots, p_{n}\overline{\lambda_n}^{\phi}\lambda_n p_{n}^{-1}=\overline{\mu_n}^{\phi}\mu_n,
$$
which means
$$
k_{1}^{2}=l_{1}^{2},\cdots,k_{n}^{2}=l_{n}^{2}.
$$
Therefore, we obtain $k_{1}=l_{1},\cdots,k_{n}=l_{n}$. That is, $T_{A}=T_{B}$. Therefore, $A$ and $B$ are unitarily $\phi$-congruent.

We can use the same methods to prove $(1)\Leftrightarrow (3)$.
$(2)\Leftrightarrow (4)\Leftrightarrow (5)$ and $(3)\Leftrightarrow (6)\Leftrightarrow(7)$ can be obtained from Theorem \ref{shui}.

\end{proof}

\begin{lemma}
Let $\phi\in S$ and $A\in M_n(\mathbb H)$. If $A$ is
$\phi$-conjugate normal, then there exists a polynomial $Q$
with real coefficients and degree at most $2n-1$ such that
\[
\bigl(A\overline{A}^{\,\phi}\bigr)^*
   =Q\bigl(A\overline{A}^{\,\phi}\bigr).
\]
\end{lemma}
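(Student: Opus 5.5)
The plan is to reduce the statement to Proposition \ref{2026.1.5.3}. That proposition says a quaternionic matrix $N$ is normal exactly when $N^*=p(N)$ for some real polynomial $p$ of degree at most $2n-1$. So it suffices to show that $A_R=A\overline{A}^{\phi}$ is normal, and Proposition \ref{2026.1.5.1} already gives this for any $\phi$-conjugate normal $A$.

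The steps, in order:

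First, since $A$ is $\phi$-conjugate normal with $\phi\in\mathbb{S}$, Proposition \ref{2026.1.5.1} gives that $A_R$ is normal. That proof treated only $A_L$ explicitly and called $A_R$ analogous, so I would check the $A_R$ case directly. I use $\overline{\overline{A}^{\phi}}^{\phi}=A$ (valid for $\phi\in\mathbb{S}$) and conjugate the defining identity $\overline{A^*A}^{\phi}=AA^*$ to get $\overline{AA^*}^{\phi}=A^*A$. Since $\phi$-conjugation is multiplicative, this reads $\overline{A}^{\phi}\,\overline{A^*}^{\phi}=A^*A$ and $\overline{A^*}^{\phi}\,\overline{A}^{\phi}=AA^*$. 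Then
\[
A_RA_R^*=A\overline{A}^{\phi}\,\overline{A^*}^{\phi}A^*=A(A^*A)A^*=(AA^*)^2 .
\]
Likewise, using $(\overline{A}^{\phi})^*=\overline{A^*}^{\phi}$,
\[
A_R^*A_R=\overline{A^*}^{\phi}A^*A\,\overline{A}^{\phi}=\overline{A^*}^{\phi}\,\overline{A}^{\phi}\,\overline{A^*}^{\phi}\,\overline{A}^{\phi}=(AA^*)^2 .
\]
Hence $A_R$ is normal.

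Second, I apply the necessity direction of Proposition \ref{2026.1.5.3} to $N=A_R\in M_n(\mathbb{H})$. Concretely, I unitarily diagonalize $A_R$ as $U^*A_RU=\mathrm{diag}(\lambda_1,\dots,\lambda_n)$ with complex $\lambda_i$. Lemma \ref{1011} then supplies a real polynomial $Q$ of degree at most $2n-1$ with $Q(\lambda_i)=\overline{\lambda_i}$. Because $Q$ has real coefficients, it commutes with unitary conjugation over $\mathbb{H}$, which gives $A_R^*=Q(A_R)$.

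The only step needing care is the first: confirming that the identity $\overline{AA^*}^{\phi}=A^*A$ follows from the definition. This relies on $\phi$-conjugation being an involution, which holds for $\phi\in\mathbb{S}$ but not for $\phi\in\mathbb{S}^{\prime}$. This is precisely why the hypothesis $\phi\in\mathbb{S}$ is needed. Everything else is a direct citation.
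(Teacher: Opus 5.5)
Your proposal is correct and follows essentially the paper's argument. Both get normality of $A\overline{A}^{\phi}$ from Proposition \ref{2026.1.5.1} (your explicit check of the $A_R$ case is right), then unitarily diagonalize it and apply the real interpolating polynomial of Lemma \ref{1011}. The only difference is in how the diagonalization is obtained. The paper uses the unitary $\phi$-congruence diagonalization $U^{*}A\overline{U}^{\phi}=D$ of $A$, and then needs a ``without loss of generality'' step to make the entries $\mu_s\overline{\mu_s}^{\phi}$ complex. You instead apply Proposition \ref{2026.1.5.3} directly to the normal matrix $A\overline{A}^{\phi}$, which avoids that step and is slightly cleaner.
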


\begin{proof}
If $A$ is $\phi$-conjugate normal, then the matrix $A\overline{A}^{\phi}$ is normal by Proposition \ref{2026.1.5.1}. Since $A$ is $\phi$-conjugate normal, there exists a unitary matrix $U$ such that $U^{*}A\overline{U}^{\phi}=D=\mathrm{diag}(\mu_{1},\cdots,\mu_{n})$. Then
$$
U^{*}A\overline{A}^{\phi}U=D\overline{D}^{\phi}=\mathrm{diag}(\mu_{1}\overline{\mu_{1}}^{\phi},\cdots,\mu_{n}\overline{\mu_{n}}^{\phi}).
$$
As well known, every normal quaternionic matrix is unitarily equivalent to a complex diagonal matrix, we may assume $D\overline{D}^{\phi}$ is complex without loss of generality. That is, $\mu_{s}\overline{\mu_{s}}^{\phi}\in\mathbb{C}$ for all $1\leq s\leq n$. By Lemma \ref{1011}, there is a real coefficient polynomial $Q$ of degree at most $2n-1$ such that
$$
Q(\mu_{s}\overline{\mu_{s}}^{\phi})=\overline{\mu_{s}\overline{\mu_{s}}^{\phi}},\ \ s=1,\cdots,n.
$$
Therefore,
$$
\begin{aligned}
(A\overline{A}^{\phi})^*&=U \mathrm{diag}(\overline{\mu_{1}\overline{\mu_{1}}^{\phi}},\cdots,\overline{\mu_{n}\overline{\mu_{n}}^{\phi}})U^*\\
&=U \mathrm{diag}(Q(\mu_{1}\overline{\mu_{1}}^{\phi}),\cdots,Q(\mu_{n}\overline{\mu_{n}}^{\phi}))U^*\\
&=UQ(\mathrm{diag}(\mu_{1}\overline{\mu_{1}}^{\phi},\cdots,\mu_{n}\overline{\mu_{n}}^{\phi}))U^*\\
&=Q(U \mathrm{diag}(\mu_{1}\overline{\mu_{1}}^{\phi},\cdots,\mu_{n}\overline{\mu_{n}}^{\phi})U^*)\\
&=Q(A\overline{A}^{\phi}).
\end{aligned}
$$
\end{proof}
\begin{proposition}
Let $\phi\in\mathbb{S}$ and $A,B\in M_{n}(\mathbb{H})$ be two $\phi$-Hermitian matrices. If $A$ and $B$ are $\phi$-similar, then they are unitarily $\phi$-congruent.
\end{proposition}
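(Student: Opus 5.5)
The plan is to pass from $\phi$-similarity of $A,B$ to ordinary similarity of $A_R=A\overline{A}^{\phi}$ and $B_R=B\overline{B}^{\phi}$. We then upgrade this to unitary equivalence using normality, and conclude with Theorem \ref{8181}.

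\textbf{Step 1: $B_R$ is similar to $A_R$.} Suppose $P$ is invertible with $B=PA(\overline{P}^{\phi})^{-1}$. Using the multiplicativity and inverse rules for $\phi$-conjugation from the basic lemma of Section 3, we get
$$\overline{B}^{\phi}=\overline{P}^{\phi}\,\overline{A}^{\phi}\,\bigl(\overline{\overline{P}^{\phi}}^{\phi}\bigr)^{-1}.$$
Since $\phi\in\mathbb{S}$, we have $\overline{\overline{P}^{\phi}}^{\phi}=P$. Hence
$$B_R=B\overline{B}^{\phi}=PA(\overline{P}^{\phi})^{-1}\overline{P}^{\phi}\,\overline{A}^{\phi}P^{-1}=PA_RP^{-1},$$
so $A_R\sim B_R$.

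\textbf{Step 2: $A_R$ and $B_R$ are normal.} A $\phi$-Hermitian matrix with $\phi\in\mathbb{S}$ is $\phi$-conjugate normal, as noted in the remark after the definitions. One can also check this directly: $\overline{A}^{\phi}=A^*$ gives $\overline{A^*A}^{\phi}=\overline{A^*}^{\phi}\,\overline{A}^{\phi}=AA^*$, because $\overline{A^*}^{\phi}=(\overline{A}^{\phi})^*=A$. Proposition \ref{2026.1.5.1} then shows that $A_R$ and $B_R$ are normal.

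\textbf{Step 3: conclude.} By Corollary \ref{2026.1.5.2}, the similar normal matrices $A_R$ and $B_R$ are unitarily equivalent. Statement (3) of Theorem \ref{8181} then yields statement (1), namely that $A$ and $B$ are unitarily $\phi$-congruent.

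The only delicate point is Step 1. There one must be careful with noncommutativity and check that $\phi$-conjugation is multiplicative and involutive for $\phi\in\mathbb{S}$, so that the inner factors $(\overline{P}^{\phi})^{-1}\overline{P}^{\phi}$ cancel. Everything else is a direct citation of earlier results.
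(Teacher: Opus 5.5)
Your proof is correct and is essentially the paper's argument. The paper likewise derives $A\overline{A}^{\phi}=P^{-1}B\overline{B}^{\phi}P$, notes that these matrices are Hermitian and hence normal, upgrades similarity to unitary equivalence via Corollary \ref{2026.1.5.2}, and concludes with Theorem \ref{8181}. Your explicit use of $\overline{\overline{P}^{\phi}}^{\phi}=P$ for $\phi\in\mathbb{S}$ makes the cancellation in Step 1 clearer than the paper's one-line claim, and it correctly identifies where the hypothesis $\phi\in\mathbb{S}$ is needed.
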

\begin{proof}
If $A$ and $B$ are two $\phi$-Hermitian matrices, then the matrices $A\overline{A}^{\phi}$ and $B\overline{B}^{\phi}$ are both Hermitian by Proposition \ref{2026.1.5.1}. Now we assume $A$ and $B$ are $\phi$-similar, then there exists an invertible matrix $P$ such that $PA\overline{P^{-1}}^{\phi}=B$, which implies
$$
A\overline{A}^{\phi}=P^{-1}B\overline{B}^{\phi}P.
$$
The above equality means the Hermitian matrices $A\overline{A}^{\phi}$ and $B\overline{B}^{\phi}$ are similar. Then by Corollary \ref{2026.1.5.2}, we know that $A\overline{A}^{\phi}$ and $B\overline{B}^{\phi}$ are unitarily equivalent. Therefore, $A$ and $B$ are unitarily $\phi$-congruent by Theorem
\ref{8181}.
\end{proof}
 Now we let $A=S+K$ where $S=\frac{1}{2}(A+\overline{A^*}^{\phi})$ is $\phi$-Hermitian and $K=\frac{1}{2}(A-\overline{A^*}^{\phi})$ is skew $\phi$-Hermitian. In fact, $S$ and $K$ are both special $\phi$-conjugate normal matrices.
\begin{definition}
The decomposition $A=S+K$ as above is called the $\phi$-HSH decomposition of A.
\end{definition}
\begin{theorem}
Assume $A\in M_{n}(\mathbb{H})$ and $\phi\in\mathbb{S}$, then A is $\phi$-conjugate normal if and only if $S\overline{K}^{\phi}=K\overline{S}^{\phi}$. Furthermore, the equation  is preserved by unitary $\phi$-congruence.
\end{theorem}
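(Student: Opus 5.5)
The approach is a direct computation in terms of $S$ and $K$, using the identities valid for $\phi\in\mathbb{S}$: $\overline{\overline{X}^{\phi}}^{\phi}=X$, $\overline{XY}^{\phi}=\overline{X}^{\phi}\,\overline{Y}^{\phi}$, and $\overline{X^*}^{\phi}=(\overline{X}^{\phi})^*$. By definition $\overline{S^*}^{\phi}=S$ and $\overline{K^*}^{\phi}=-K$. Applying $\phi$-conjugation (an involution here) and the conjugate-transpose identity gives
\[
S^*=\overline{S}^{\phi}, \qquad K^*=-\overline{K}^{\phi}.
\]
Hence $A=S+K$ and $A^*=\overline{S}^{\phi}-\overline{K}^{\phi}$.

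\textbf{Expanding the normality condition.} I would first write
\[
A^*A=(\overline{S}^{\phi}-\overline{K}^{\phi})(S+K)=\overline{S}^{\phi}S+\overline{S}^{\phi}K-\overline{K}^{\phi}S-\overline{K}^{\phi}K.
\]
Applying $\phi$-conjugation termwise gives
\[
\overline{A^*A}^{\phi}=S\overline{S}^{\phi}+S\overline{K}^{\phi}-K\overline{S}^{\phi}-K\overline{K}^{\phi}.
\]
On the other side,
\[
AA^*=(S+K)(\overline{S}^{\phi}-\overline{K}^{\phi})=S\overline{S}^{\phi}-S\overline{K}^{\phi}+K\overline{S}^{\phi}-K\overline{K}^{\phi}.
\]
Subtracting, $\overline{A^*A}^{\phi}-AA^*=2\,(S\overline{K}^{\phi}-K\overline{S}^{\phi})$. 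So $\phi$-conjugate normality, $\overline{A^*A}^{\phi}=AA^*$, holds iff $S\overline{K}^{\phi}=K\overline{S}^{\phi}$. This settles the equivalence in both directions at once.

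\textbf{Preservation under unitary $\phi$-congruence.} Let $B=UAU^{*\phi}$ with $U$ unitary. I would first check that the $\phi$-HSH decomposition is compatible with the congruence. Using the lemma on $\phi$-transposes, $(UAU^{*\phi})^{*\phi}=(U^{*\phi})^{*\phi}A^{*\phi}U^{*\phi}=UA^{*\phi}U^{*\phi}$, where $(U^{*\phi})^{*\phi}=U$ because $\phi\in\mathbb{S}$. Hence the $\phi$-Hermitian and skew $\phi$-Hermitian parts of $B$ are $S_B=USU^{*\phi}$ and $K_B=UKU^{*\phi}$.

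Next, note that $\overline{U^{*\phi}}^{\phi}=U^*$ and that $U^{*\phi}\overline{U}^{\phi}=\overline{U^*U}^{\phi}=I$. Then
\[
S_B\overline{K_B}^{\phi}=USU^{*\phi}\,\overline{U}^{\phi}\,\overline{K}^{\phi}\,U^*=US\overline{K}^{\phi}U^*,
\]
and similarly $K_B\overline{S_B}^{\phi}=UK\overline{S}^{\phi}U^*$. So the equation $S\overline{K}^{\phi}=K\overline{S}^{\phi}$ holds for $A$ iff it holds for $B$.

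\textbf{Main difficulty.} The only delicate point is the bookkeeping of the $\phi$-identities: $\phi$-conjugation is multiplicative rather than order-reversing, it is an involution, and $\overline{U}^{\phi}$ is again unitary. These all rely on $\phi^2=-1$, which is exactly where $\phi\in\mathbb{S}$ is used. With these identities in hand, both parts of the theorem are two-line expansions.
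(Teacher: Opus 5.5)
Your proof is correct and follows the paper's route. You use $S^*=\overline{S}^{\phi}$ and $K^*=-\overline{K}^{\phi}$ to get $\overline{A^*A}^{\phi}-AA^*=2(S\overline{K}^{\phi}-K\overline{S}^{\phi})$, then conjugate $S$ and $K$ by a unitary and cancel $U^{*\phi}\overline{U}^{\phi}=I$; you are in fact more complete than the paper, since you write the expansion out and check that the $\phi$-HSH parts of $UAU^{*\phi}$ are $USU^{*\phi}$ and $UKU^{*\phi}$, which the paper takes for granted.
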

\begin{proof}
Let $A=S+K$, then
$$
A^*A=S^*S+S^*K+K^*S+K^*K,~~~AA^*=SS^*+SK^*+KS^*+KK^*.
$$
Note S is $\phi$-Hermitian and K is skew $\phi$-Hermitian, hence A is $\phi$-conjugate normal if and only if $S\overline{K}^{\phi}=K\overline{S}^{\phi}$. Now assume $S_1=U^*S\overline{U}^{\phi}$ and $K_1=U^*K\overline{U}^{\phi}$. Then
$$
S_1\overline{K_1}^{\phi}=U^*S\overline{U}^{\phi}\overline{U^*}^{\phi}\overline{K}^{\phi}U=U^*K\overline{S}^{\phi}U=U^*K\overline{U}^{\phi}\overline{U^*}^{\phi}\overline{S}^{\phi}U=K_1\overline{S_1}^{\phi}.
$$
\end{proof}

\begin{theorem}\label{264143}
Let $A=S_{1}+K_{1},B=S_{2}+K_{2}$ be the $\phi$-HSH decomposition of quaternionic matrices $A$ and $B$ in $M_{n}(\mathbb{H})$. Then the following statements are equivalent.
\begin{enumerate}
\item [(1)] A and B are unitarily $\phi$-congruent.

\item [(2)] The families $\{ S_1,K_1\}$ and $\{ S_2,K_2\}$ are unitarily $\phi$-congruent.

\item [(3)] The families $\{ S_1,K_1\}$ and $\{ S_2,K_2\}$ are $\phi$-similar.
\end{enumerate}
\end{theorem}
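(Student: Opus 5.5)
The plan is to reduce everything to Theorem \ref{2026.2.5.10} by the translation of Lemma \ref{264141}: unitary $\phi$-congruence of $X,Y$ is unitary equivalence of $X\overline{\phi}$ and $Y\overline{\phi}$. Throughout, $\phi\in\mathbb{S}$, as in the rest of this section, so $\overline{\phi}=-\phi=\phi^{-1}$ and $\phi^2=-1$.

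\textbf{Step 1: the $\phi$-HSH decomposition becomes the Hermitian/skew-Hermitian decomposition.} If $S$ is $\phi$-Hermitian, then $S=\overline{\phi}S^*\phi$, i.e. $S^*=\phi S\overline{\phi}$. Hence
\[
(S\overline{\phi})^*=\phi S^*=\phi^2 S\overline{\phi}=-S\overline{\phi},
\]
so $S\overline{\phi}$ is skew-Hermitian. The same computation shows that $K\overline{\phi}$ is Hermitian when $K$ is skew $\phi$-Hermitian. Consequently, in the decomposition (\ref{2026.1.5.6}) of $A\overline{\phi}=S_1\overline{\phi}+K_1\overline{\phi}$, the Hermitian part is $H_1=K_1\overline{\phi}$ and the skew-Hermitian part is $H_2=S_1\overline{\phi}$. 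Likewise, for $B\overline{\phi}$ we get $G_1=K_2\overline{\phi}$ and $G_2=S_2\overline{\phi}$. This uses the uniqueness of the decomposition into Hermitian and skew-Hermitian parts.

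\textbf{Step 2: translate the three statements.} For any invertible $P$ we have $(\overline{P}^{\phi})^{-1}=\overline{\phi}P^{-1}\phi$, and therefore
\[
P X(\overline{P}^{\phi})^{-1}\overline{\phi}=P X\overline{\phi}P^{-1}\phi\overline{\phi}=P(X\overline{\phi})P^{-1}.
\]
So $\phi$-similarity of the families $\{S_1,K_1\}$ and $\{S_2,K_2\}$ via $P$ is exactly similarity of $\{S_1\overline{\phi},K_1\overline{\phi}\}$ and $\{S_2\overline{\phi},K_2\overline{\phi}\}$ via $P$, that is, similarity of $\{H_2,H_1\}$ and $\{G_2,G_1\}$. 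Taking $P=U$ unitary gives the analogous translation for statement (2): unitary $\phi$-congruence of the families becomes unitary equivalence of $\{H_1,H_2\}$ and $\{G_1,G_2\}$. By Lemma \ref{264141}, statement (1) is unitary equivalence of $A\overline{\phi}$ and $B\overline{\phi}$. The equivalence (1)$\Leftrightarrow$(2)$\Leftrightarrow$(3) then follows directly from Theorem \ref{2026.2.5.10} applied to $A\overline{\phi}$ and $B\overline{\phi}$.

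\textbf{Main obstacle and fallback.} The only delicate point is the bookkeeping in Step 2: one must check that the conjugating matrix is the same for both members of each family, and in the correct form $U(\cdot)U^{*}$ versus $U^{*}(\cdot)U$. The displayed identity handles this uniformly. As a sanity check, (2)$\Rightarrow$(3) is trivial. For (1)$\Rightarrow$(2) directly, apply $(\cdot)^{*\phi}$ to $B=UAU^{*\phi}$; using $(X^{*\phi})^{*\phi}=X$ for $\phi\in\mathbb{S}$, this gives $B^{*\phi}=UA^{*\phi}U^{*\phi}$. Averaging the two relations yields $S_2=US_1U^{*\phi}$ and $K_2=UK_1U^{*\phi}$. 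So if the reduction in Step 2 raised any doubt, I would prove (1)$\Rightarrow$(2) this way and use the reduction only for the nontrivial direction (3)$\Rightarrow$(1).
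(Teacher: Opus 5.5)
Your proof is correct and follows essentially the paper's route. Both arguments multiply by a scalar factor of $\phi$ to turn $\phi$-similarity and unitary $\phi$-congruence into ordinary similarity and unitary equivalence; the $\phi$-Hermitian and skew $\phi$-Hermitian parts then become the skew-Hermitian and Hermitian parts, and the quaternionic Al'pin--Ikramov criterion finishes the proof.

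The differences are cosmetic. You multiply on the right by $\overline{\phi}$, so the same conjugating matrix $P$ works verbatim, whereas the paper multiplies on the left by $\phi$ and the conjugating matrix becomes $\phi P\overline{\phi}$. You also cite Theorem~\ref{2026.2.5.10}, while the paper redoes its linear-combination step directly with Theorem~\ref{shui}.
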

\begin{proof}
$(3)\Rightarrow (1)$. Assume the pairs $\{ S_1,K_1\}$ and $\{ S_2,K_2\}$ are $\phi$-similar, this is equivalent to the pairs $\{ \phi S_1,\phi K_1\}$ and $\{ \phi S_2,\phi K_2\}$ are similar. And furthermore, this is equivalent to the pairs $\{ \phi (S_1+K_1),\phi(K_1-S_1)\}$ and $\{ \phi(S_2+K_2),\phi(K_2-S_2)\}$ are similar. Notice that

$$
\phi A=\phi(S_1+K_1), (\phi A)^{\ast}=A^{\ast}\overline{\phi}=-(S_{1}^{\ast}+K_1^{\ast})\phi=-(\overline{S_{1}}^{\phi}-\overline{K_{1}}^{\phi})\phi=\phi(K_1-S_1).
$$
Therefore, the pairs $\{\phi A,(\phi A)^{\ast}\}$ and $\{\phi B,(\phi B)^{\ast}\}$ are similar, which is equivalent to $\phi A$ and $\phi B$ are unitarily equivalent by Theorem \ref{shui}. And finally, $A$ and $B$ are unitarily $\phi$-congruent.

The proof of $(1)\Rightarrow (2)$ and $(2)\Rightarrow (3)$ are obvious.
\end{proof}

\section*{Statements and Declarations}

\noindent \textbf{Ethics approval}

\noindent Not applicable.

\noindent \textbf{Competing interests}

\noindent The authors declare that there is no conflict of interest or competing interest.

\noindent \textbf{Authors' contributions}

\noindent All authors contributed equally to this work.

\noindent \textbf{Availability of data and materials}

\noindent Data sharing is not applicable to this article as no data sets were generated or analyzed during the current study.



\begin{thebibliography}{10}
\bibitem{Kh2003}
Yu. A$\text{l}^\prime$pin, Kh. Ikramov. 
\newblock On the unitary similarity of matrix families. 
\newblock {\em Mat. Zametki}, 74(6): 815--826, 2003; translation in Math. Notes 74(5--6): 772--782, 2003.


\bibitem{A1915}
L. Autonne. 
\newblock Sur les matrices hypohermitiennes et sur les matrices unitaires. 
\newblock {\em Ann. Univ. Lyon, Nouvelle Serie I, Fasc}, 38: 1--77, 1915.

\bibitem{MBYY}
M. Bekar, Y. Yay{\i}.
\newblock Dual Quaternion Involutions and Anti--Involutions.
\newblock {\em Adv Appl Clifford Al}, 23: 577--592, 2013.

\bibitem{JL51}
J. Brenner. 
\newblock Matrices of quaternions. 
\newblock {\em Pac. J. Math}, 1(3): 329--335, 1951.

\bibitem{JBC}
J. Conway. 
\newblock A Course in Functional Analysis.
\newblock {\em Grad. Texts in Math, 96. Springer--Verlag, New York}, 1985.

\bibitem{TAESJS}
T. Ell, S. Sangwine.
\newblock Quaternion involutions and anti--involutions.
\newblock {\em Comput. Math. Appl}, 53: 137--143, 2007.

\bibitem{FWZ}
F. Farid, Q. Wang, F. Zhang. 
\newblock On the eigenvalues of quaternion matrices.
\newblock {\em Linear Multilinear A}, 59(4): 451--473, 2011.


\bibitem{FHJ}
X. Feng, B. Hou, K. Ji.
\newblock Cowen--Douglas operators on quaternionic Hilbert spaces.
\newblock {\em RACSAM Rev. R. Acad. A., forthcoming}, 2026.

\bibitem{GJSW87}
R. Grone, C. Johnson, E. Sa, H. Wolkowicz.
\newblock Normal matrices.
\newblock {\em Linear Algebra Appl}, 87: 213--225, 1987.

\bibitem{HTX}
J. Huang, Y. Tan, K. Xu.
\newblock Circulant matrix solution to a quaternion matrix equation and its optimal approximation.
\newblock {\em J. Math}, 34(2): 353--359, 2014.

\bibitem{HM2000}
B. Hochwald, T. Marzetta.
\newblock Unitary space-time modulation for multiple--antenna communications in Rayleigh flat fading.
\newblock {\em IEEE T Inform Theory}, 46(2): 543--564, 2000.

\bibitem{HZ}
R. Horn, F. Zhang.
\newblock A generalization of the complex Autonne--Takagi factorization to quaternion matrices.
\newblock {\em Linear Multilinear A}, 60: 1239--1244, 2012.


\bibitem{Kh2006}
Kh. Ikramov.
\newblock A criterion for the unitary congruence of conjugate--normal matrices.
\newblock {\em Dokl. Ross. Akad. Nauk}, 410(1): 17--18, 2006.

\bibitem{RE1944}
R. Johnson.
\newblock On the equation $\chi = \gamma + \beta$ over an algebraic division ring. 
\newblock {\em Bull. Amer. Math. Soc}, 50: 202--207, 1944.

\bibitem{LW2024}
K. Lai, X. Wang.
\newblock Group sparse matrix optimization for efficient quantum state transformation.
\newblock {\em Phys Rev A}, 110, 13 pp, 2024.

\bibitem{VM1997}
S. Phoong, P. Vaidyanathan.
\newblock Paraunitary Filter Banks Over Finite Fields.
\newblock {\em IEEE T. Signal Proces}, 45(6): 1443--1457, 1997.


\bibitem{TT}
T. Takagi.
\newblock On an algebraic problem related to an analytic theorem of carath\'{e}odory and fej\'{e}r
and on an allied theorem of landau.
\newblock {\em In: Japanese Journal of Mathematics: Transactions and
Abstracts. Vol. 1. Tokyo: The Mathematical Society of Japan}, 83--93, 1924.


\bibitem{AT}
A. Teretenkov.
\newblock Singular value decomposition for skew--Takagi factorization with quantum
applications.
\newblock {\em Linear Multilinear A}, 70(22): 7762--7769, 2022.


\bibitem{TMZ}
C. Took, D. Mandic, F. Zhang.
\newblock On the unitary diagonalization of a special class of quaternion matrices.
\newblock {\em Appl Math Lett}, 24: 1806--1809, 2011.

\bibitem{ZFZS}
F. Zhang.
\newblock Ger\v{s}gorin type theorems for quaternionic matrices.
\newblock {\em Linear Algebra Appl}, 424(1): 139--153, 2007.

\bibitem{FZZ}
F. Zhang.
\newblock Matrix Theory: Basic Results and Techniques.
\newblock {\em second ed., Springer, New York}, 2011.

\bibitem{Z}
F. Zhang.
\newblock Quaternions and matrices of quaternions.
\newblock {\em Linear Algebra Appl},
251: 21--57, 1997.
\end{thebibliography}
\end{document}